\DeclareMathOperator{\ad}{ad}
\DeclareMathOperator{\Aut}{Aut}
\DeclareMathOperator{\coker}{coker}
\DeclareMathOperator{\cone}{Cone}
\DeclareMathOperator{\coord}{Coord}
\DeclareMathOperator{\Der}{Der}
\DeclareMathOperator{\DR}{DR}
\DeclareMathOperator{\en}{End}
\DeclareMathOperator{\gr}{gr}
\DeclareMathOperator{\Hoch}{HH}
\DeclareMathOperator{\Hom}{Hom}
\DeclareMathOperator{\id}{Id}
\DeclareMathOperator{\Lie}{Lie}
\DeclareMathOperator{\LC}{LC}
\DeclareMathOperator{\LM}{LM}
\DeclareMathOperator{\LT}{LT}
\DeclareMathOperator{\ran}{Ran}
\DeclareMathOperator{\res}{res}
\DeclareMathOperator{\sn}{sn}
\DeclareMathOperator{\Spec}{Spec}
\DeclareMathOperator{\Specm}{Specm}
\DeclareMathOperator{\sing}{SS}
\DeclareMathOperator{\Sym}{Sym}
\DeclareMathOperator{\Tor}{Tor}
\DeclareMathOperator{\zhu}{Zhu}
\newcommand{\g}{\mathfrak{g}}
\newcommand{\ZZZ}{\mathfrak{Z}}
\newcommand{\al}{\alpha}
\newcommand{\Om}{\Omega}
\newcommand{\om}{\omega}
\newcommand{\D}{\Delta}
\def\G{\Gamma}
\newcommand{\La}{\Lambda}
\newcommand{\la}{\lambda}
\def\C{\mathbb{C}}
\newcommand{\Z}{\mathbb{Z}}
\newcommand{\CA}{\mathcal{A}}
\newcommand{\CC}{\mathcal{C}}
\newcommand{\CD}{\mathcal{D}}
\newcommand{\CF}{\mathcal{F}}
\newcommand{\HH}{\mathcal{H}}
\newcommand{\CK}{\mathcal{K}}
\newcommand{\CM}{\mathcal{M}}
\newcommand{\OO}{\mathcal{O}}
\newcommand{\CS}{\mathcal{S}}
\newcommand{\CV}{\mathcal{V}}
\newcommand{\mx}{\mathring{X}}
\newcommand{\XtotheS}{X^{\mathcal{S}}}
\newcommand{\zuta}{\underline{\zeta}}
\newcommand{\envela}{A^{\text{e}}}
\newcommand{\ov}[1]{\overline{#1}}
\newcommand{\vac}{\mathbf{1}}
\newcommand{\vir}{\text{Vir}}
\newtheoremstyle{exps}{\topsep}{\topsep}{}{0pt}{\bfseries}{.}{0pt}{}
\newtheorem*{thm*}{Theorem}
\newtheorem*{prop*}{Proposition}
\newtheorem*{lem*}{Lemma}
\newtheorem*{cor*}{Corollary}
\newtheorem*{rem*}{Remark}
\newtheorem{thm}{Theorem}[section]
\newtheorem{prop}[thm]{Proposition}
\newtheorem{lem}[thm]{Lemma}
\newtheorem{cor}[thm]{Corollary}
\theoremstyle{definition}
\newtheorem*{defn*}{Definition}
\newtheorem*{exer*}{Exercise}
\newtheorem*{problem*}{Problem}
\newtheorem{rem}[thm]{Remark}
\newtheorem{nolabel}[thm]{ }
\theoremstyle{exps}
\numberwithin{equation}{section}
\title[Chiral Homology of Elliptic Curves and the Zhu Algebra]{Chiral Homology of Elliptic Curves and the Zhu Algebra}
\author[*]{Jethro van Ekeren}
\author[**]{Reimundo Heluani}
\begin{document}


\begin{center}
{\LARGE \bf Chiral Homology of Elliptic Curves and the Zhu Algebra} \par \bigskip

\renewcommand*{\thefootnote}{\fnsymbol{footnote}}
{\normalsize
Jethro van Ekeren\footnote{email: \texttt{jethrovanekeren@gmail.com}}\textsuperscript{1}
Reimundo Heluani \textsuperscript{2},
}

\par \bigskip

\textsuperscript{1}{\footnotesize Instituto de Matem\'{a}tica e Estat\'{i}stica (GMA), UFF, Niter\'{o}i RJ, Brazil}

\par 

\textsuperscript{2}{\footnotesize Instituto Nacional de Matem\'{a}tica Pura e Aplicada, Rio de Janeiro, RJ, Brazil}

\par \bigskip
\end{center}

\vspace*{10mm}

\noindent
\textbf{Abstract.} We study the chiral homology of elliptic curves with coefficients in a quasiconformal vertex algebra $V$. Our main result expresses the nodal curve limit of the first chiral homology group in terms of the Hochschild homology of the Zhu algebra of $V$. A technical result of independent interest regarding the relationship between the associated graded of $V$ with respect to Li's filtration and the arc space of the $C_2$-algebra is proved. 

\vspace*{10mm}


\section{Introduction}

\begin{nolabel}
Spaces of conformal blocks play a central role in the mathematical approach to conformal field theory based on vertex algebras, and are the point of contact in a fruitful interaction between representation theory and the geometry of moduli spaces.

Let $\g$ be a finite dimensional semisimple Lie algebra, $\widehat{\g} = \g((t)) \oplus \C K$ the associated affine Kac-Moody algebra, $V$ a $\widehat{\g}$-module, and $X$ a smooth complex algebraic curve. We denote by $\widehat{\g}_{\text{out}}$ the Lie algebra of meromorphic $\g$-valued functions on $X$ with possible pole at a point $x \in X$. If we choose a coordinate $t$ at $x$ then we may use it to expand in Laurent series and obtain a morphism $\widehat{\g}_{\text{out}} \rightarrow \widehat{\g}$. The space of conformal blocks is then (the dual of) the vector space of coinvariants
\begin{align}\label{liealg.coinvariants}
H(X, x, \g, V) = \frac{V}{\widehat{\g}_{\text{out}} \cdot V},
\end{align}
which is well defined independently of the choices made. Spaces of conformal blocks appear in connection with moduli spaces of $G$-bundles over $X$ in the guise of nonabelian theta functions.


The construction (\ref{liealg.coinvariants}) blends the notions of Lie algebra
homology and de Rham cohomology of an algebraic variety. This perspective finds
natural expression in Beilinson-Drinfeld's definition of chiral homology. A
vertex algebra, and more generally a chiral algebra, may be interpreted as a Lie
algebra within a certain category of $\CD$-modules over the Ran space of an
algebraic curve. The chiral homology is then defined as the de Rham cohomology
of the Chevalley complex of this Lie algebra. The space of conformal blocks is
recovered as the dual of the zeroth chiral homology.

Higher chiral homology groups are of interest in the geometric Langlands program, specifically in connection with the construction of Hecke eigensheaves. Indeed the chiral Hecke algebra $A_k(\g)$ is defined as a certain vertex algebra extension of the simple affine vertex algebra $V_k(\g)$ at level $k \in \Z_{\geq 0}$, and Hecke eigensheaves with eigenvalue specified by a local system $E$ are realised in the higher chiral homology groups of a twist of $A_k(\g)$ by $E$. In this context Beilinson and Drinfeld ask {\cite[4.9.10]{BD.Chiral}} whether the higher chiral homology groups of $V_k(\g)$ vanish.

Following the seminal work of Zhu \cite{Z96}, spaces of conformal blocks of elliptic curves, and especially their behaviour over families degenerating to a nodal curve, have come to play an important role in the representation theory of vertex algebras. Let $X_q$ denote the elliptic curve $\C / \Z + \Z\tau$ where $q = e^{2\pi i \tau}$, let $V$ be a quasiconformal vertex algebra and let $\CA_V$ denote the associated chiral algebra on $X_q$. The space of conformal blocks is given by
\begin{align}\label{cb.quot.intro}
H_0^{\text{ch}}(X_q, \CA_V) \cong \frac{V}{V_{(0)}V + V_{(\wp)}V},
\end{align}
where $\wp = \wp(z, q)$ is the Weierstrass elliptic function. Zhu observed that the specialisation of (\ref{cb.quot.intro}) at $q = 0$ recovers the zeroth Hochschild homology of a certain associative algebra $\zhu(V)$ whose representation theory is strongly connected with that of $V$. 
The modular nature of graded dimensions of regular vertex algebras is ultimately explained by these facts.

The main results of this paper concern the relationship between chiral homology of elliptic curves and Hochschild homology of the Zhu algebra in higher degrees. We specialise Beilinson-Drinfeld's general definition of chiral homology to develop explicit complexes computing the chiral homology groups of the elliptic curve $X_q$ with coefficients in $\CA_V$. We then focus attention on $H_i^{\text{ch}}(X_q, \CA_V)$ for $i=0,1$, deriving a small three term complex $A^\bullet(q)$ which computes these groups in Section \ref{sec:ch.hom.ell}, specifically Proposition \ref{prop:chhom.equals.A}. We recover the presentation (\ref{cb.quot.intro}) of $H_0^{\text{ch}}$ and similarly present $H_1^{\text{ch}}$ as an explicit subquotient of a sum of tensor powers of $V$ involving residues of elliptic functions.

We then use $A^\bullet(q)$ to analyse the behaviour of $H_1^{\text{ch}}$ in the $q \rightarrow 0$ limit. What occurs is roughly speaking the following: a large subcomplex $B^\bullet$ decouples from $A^\bullet(q)$ in the $q \rightarrow 0$ limit, and the quotient $A^\bullet(0)/B^\bullet$ computes the Hochschild homology of $\zhu(V)$. This remarkable decoupling is ultimately due to the elliptic function identity
\begin{equation} \label{eq:heat}
8 \pi^2 q \frac{d\zeta}{dq} = 2 \zeta \wp + \wp',
\end{equation}
which is a consequence of the heat equation for theta functions.

Before stating the main result we recall the notions of singular support and associated scheme of a vertex algebra. Li introduced a filtration $F$ on a vertex algebra $V$ whose associated graded $\gr^F(V)$ is naturally a $\Z_{\geq 0}$-graded commutative algebra (indeed a Poisson vertex algebra) \cite{Lifilt}. The spectrum of $\gr^F(V)$ is known as the singular support of $V$ and is denoted $\sing(V)$. The algebra $\gr^F(V)$ is generated by its component of degree $0$, which is just Zhu's $C_2$-algebra, and whose spectrum is known as the associated scheme $X_V$ of $V$. Thus there is a natural embedding $\sing(V) \hookrightarrow J{X_V}$, where in general $JY$ denotes the arc space of the scheme $Y$.
\begin{thm}\label{H1.thm.intro}
Let $V$ be a finitely strongly generated quasiconformal vertex algebra. If the natural embedding of the singular support of $V$ into the arc space of its associated scheme is an isomorphism, i.e., if
\begin{align}\label{SS.condition}
\sing(V) \cong J{X_V},
\end{align}
then
\begin{align}\label{H1.isom.intro}
\lim_{q \rightarrow 0} H^{\text{ch}}_1(X_{q}, \CA_V) \cong \Hoch_1(\zhu(V)).
\end{align}
\end{thm}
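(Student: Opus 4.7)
The plan is to exploit the explicit three-term complex $A^\bullet(q)$ mentioned in the excerpt, whose cohomology computes $H^{\text{ch}}_0$ and $H^{\text{ch}}_1$ of $\CA_V$ on $X_q$, take its $q \to 0$ limit, and match the result against the Hochschild complex of $\zhu(V)$. The backbone of the argument is the decoupling of a subcomplex $B^\bullet \subset A^\bullet(0)$ powered by the heat equation identity (\ref{eq:heat}), together with a flatness argument in the family variable $q$ which relies crucially on the singular-support hypothesis.

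I would begin by writing the differentials of $A^\bullet(q)$ explicitly in terms of multiplications on tensor powers of $V$ weighted by the elliptic functions $\wp(z,q)$ and $\zeta(z,q)$. The identity $8\pi^2 q \partial_q \zeta = 2\zeta\wp + \wp'$ is then the essential input: applied to the differentials, it shows that $\zeta$-insertions at $q=0$ can be traded, up to $q$-exact terms, against $\wp$- and $\wp'$-insertions. Consequently the subcomplex $B^\bullet$ generated by the $\wp$-contributions decouples from the part that survives the limit, and I would verify that $B^\bullet$ contributes trivially to $H_1$ in this limit.

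Next I would identify the quotient $A^\bullet(0)/B^\bullet$ with the truncation of the Hochschild bar complex of $\zhu(V)$ in degrees $\le 2$. The polar parts of the $\zeta$-insertions as $z \to 0$ extract precisely the modes Zhu uses to build $\zhu(V)$, and the remaining regular parts correspond to Hochschild boundary contributions. Term by term, and at the level of differentials, $A^\bullet(0)/B^\bullet$ should be (quasi-)isomorphic to
\begin{equation*}
\zhu(V)^{\otimes 3} \longrightarrow \zhu(V)^{\otimes 2} \longrightarrow \zhu(V),
\end{equation*}
so that $H_1$ of the left-hand side yields $\Hoch_1(\zhu(V))$.

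The most delicate point is the passage from $H_1$ of the $q=0$ fiber complex to $\lim_{q \to 0} H^{\text{ch}}_1(X_q, \CA_V)$, which requires flatness over $\C[[q]]$ of the family of chiral homologies, equivalently the vanishing of the appropriate $\Tor^1_{\C[[q]]}$ obstruction. Here the hypothesis $\sing(V) \cong JX_V$ enters: via Li's filtration on $V$ one obtains a spectral sequence converging to $H^{\text{ch}}_*(X_q, \CA_V)$ whose $E_1$-page is expressible in terms of global sections on the arc space $JX_V$, and the classical-freeness condition $\sing(V) \cong JX_V$ is what guarantees this $E_1$-page is as small as possible and that the spectral sequence degenerates in the relevant degrees. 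Verifying this degeneration, and its compatibility with the decoupling of $B^\bullet$, is expected to be the chief technical obstacle where the bulk of the work lies.
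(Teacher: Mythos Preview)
Your overall architecture matches the paper—the complex $A^\bullet(q)$, the subcomplex $B^\bullet$ at $q=0$, and the identification of the quotient $A^\bullet(0)/B^\bullet$ with (a truncation of) the bar complex of $\zhu(V)$ are exactly what is done. But you have misplaced the role of the hypothesis $\sing(V)\cong JX_V$, and this is a genuine gap. The ``limit'' is not analytic and involves no flatness over $\C[[q]]$: by definition $\lim_{q\to 0}H^{\text{ch}}_1(X_q,\CA_V)$ simply \emph{means} $H^{-1}(A^\bullet(q{=}0))$, so there is no $\Tor^1_{\C[[q]]}$ obstruction to clear, and the spectral sequence you propose for the full chiral homology over the $q$-line plays no role.

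The hypothesis enters instead at the step you passed over in one clause: showing $H^{-1}(B^\bullet)=0$. This vanishing is the entire content of the condition and fails without it (e.g.\ for $\vir_{3,4}$). The mechanism is to filter $B^\bullet$ by Li's standard filtration on $V$ and identify $\gr B^\bullet$ with a truncation of the Koszul complex $(K^A_\bullet,\iota_T)$ of the graded differential algebra $A=\gr^F V$. A separate result (Theorem~\ref{thm:ssjets}) says $H_{-1}(K^A_\bullet,\iota_T)=0$ if and only if $A\cong JA^0$, i.e., iff $\sing(V)\cong JX_V$. Under the hypothesis one gets $H^{-1}(\gr B^\bullet)=0$, hence $H^{-1}(B^\bullet)=0$ by the filtration spectral sequence; then the long exact sequence of $0\to B^\bullet\to A^\bullet(0)\to Q^\bullet\to 0$, together with the easy surjectivity of $d:B^{-1}\to B^0$, gives $H^{-1}(A^\bullet(0))\cong H^{-1}(Q^\bullet)\cong\Hoch_1(\zhu(V))$. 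So drop the flatness story and aim the spectral-sequence argument at $B^\bullet$ alone, via the Koszul/arc-space vanishing.
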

The meaning of the left-hand side of (\ref{H1.isom.intro}) is to be understood in terms of the specialisation to $q=0$ of the complex $A^\bullet(q)$. The precise statement is given as Theorem \ref{thm:main-theorem} below. It remains to discuss for which vertex algebras $V$ the condition (\ref{SS.condition}) holds.

The condition (\ref{SS.condition}) is satisfied for the Heisenberg, universal Virasoro and universal affine vertex algebras, which are all universal enveloping vertex algebras, and is also satisfied for the universal $W$-algebra $W^k(\g, f)$. 

For the class of Virasoro minimal models $\vir_{p, p'}$, which are rational vertex algebras, we show that (\ref{SS.condition}) holds in some cases, namely $(p, p') = (2, 2k+1)$, but demonstrably fails to hold in other cases. Indeed for $V = \vir_{2, 2k+1}$ the Hilbert series of both sides of (\ref{SS.condition}) coincide with the function
\[
\prod_{\substack{m \geq 1, m \not\equiv 0, \pm 1 \\ \mod(2k+1)}} \frac{1}{1-q^m}
\]
which famously appears in Gordon's generalisation of the Rogers-Ramanujan identity. In Section \ref{sec:examples} we prove the following
\begin{thm*}[\ref{thm:minimal}] Let $V = \vir_{p,p'}$ be the Virasoro minimal model with central charge 
\[
c = c_{p, p'} = 1 - 6 \frac{(p-p')^2}{pp'}.
\]
If $(p,p')=(2,2k+1)$ where $k \geq 1$, then the natural embedding $\sing(V) \hookrightarrow JX_V$ is an isomorphism of schemes. In particular
\begin{align*}
\lim_{q \rightarrow 0} H^{\text{ch}}_1(X_{q}, \CA_V) = 0.
\end{align*}
If $p,p' \geq 3$ then the embedding is not an isomorphism. In all cases, however, the reduced schemes of $\sing(V)$ and $JX_V$ are isomorphic, both consisting of a single closed point.
\end{thm*}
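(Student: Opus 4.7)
The statement splits into three assertions: (a) the reduced-scheme statement holds in all cases, (b) the embedding $\sing(V) \hookrightarrow JX_V$ is an isomorphism and $\lim_{q\to 0}H_1^{\text{ch}}(X_q,\CA_V)=0$ for $(p,p')=(2,2k+1)$, and (c) the embedding is strict for $p,p' \geq 3$. Assertion (a) is almost immediate: the $C_2$-algebra of $\vir_{p,p'}$ is a non-zero finite-dimensional quotient of $\mathbb{C}[T]$, where $T$ denotes the image of the conformal vector, and by inspection of the leading vacuum singular vector of $\vir^{c_{p,p'}}$ (at conformal weight $(p-1)(p'-1)$, which is even) this quotient has the form $\mathbb{C}[T]/(T^N)$ with $N=(p-1)(p'-1)/2$. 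Consequently $X_V$ is a fat point at the origin of $\mathbb{A}^1$, its arc space $JX_V$ is concentrated set-theoretically over this one point, and both $JX_V$ and the closed subscheme $\sing(V)\hookrightarrow JX_V$ have reduced structure $\Spec\mathbb{C}$.

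For assertion (b) my plan is to begin by extracting the leading term in Li's filtration of the vacuum singular vector of $\vir^{c_{2,2k+1}}$ at conformal weight $2k$; inspection of the explicit (Benoit-Saint-Aubin type) formula reveals that the $L_{-2}^k \vac$ monomial appears with non-zero coefficient, so the image of the singular vector in $\gr_F \vir^{c_{2,2k+1}} \cong \mathbb{C}[T,T',T'',\ldots]$ is $T^k$ up to a scalar. This identifies $X_V = \Spec\mathbb{C}[T]/(T^k)$ and produces a canonical surjection of graded algebras
\[
\varphi\colon\mathbb{C}[JX_V]=\mathbb{C}[T,T',T'',\ldots]/\bigl(\partial^n(T^k):n\geq 0\bigr) \twoheadrightarrow \gr_F V,
\]
reducing the task to proving injectivity. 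For this I would match Hilbert series: the Rocha-Caridi formula gives $\sum_n (\dim V_n)q^n = \prod_{m\not\equiv 0,\pm 1\pmod{2k+1}}(1-q^m)^{-1}$, and the same infinite product arises as the Hilbert series of $\mathbb{C}[JX_V]$ via the Andrews-Gordon identity (applied to the jet algebra of a fat point, with $T$ in conformal weight $2$). Equality of Hilbert series forces $\varphi$ to be an isomorphism, i.e., $\sing(V) \cong JX_V$. The vanishing $\lim_{q \to 0}H_1^{\text{ch}}(X_q,\CA_V)=0$ is then immediate from Theorem \ref{H1.thm.intro}, combined with the well-known fact that the rationality of $\vir_{2,2k+1}$ makes $\zhu(V)$ finite-dimensional semisimple, hence $\Hoch_1(\zhu(V))=0$.

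For assertion (c) the analogous surjection $\varphi\colon\mathbb{C}[JX_V]\twoheadrightarrow\gr_F V$ (with $X_V=\Spec\mathbb{C}[T]/(T^N)$, $N=(p-1)(p'-1)/2$) must be shown to have non-zero kernel. The Hilbert series of $\mathbb{C}[JX_V]$ is again of Andrews-Gordon type, while the Rocha-Caridi character of $\vir_{p,p'}$ is a different modular combination. My approach is to exploit the fact that for $p,p'\geq 3$ the classical limit of the vacuum singular vector contains genuine derivative terms in addition to the pure-$T$ monomial $T^N$; these additional relations in $\gr_F V$ are not visible from $\mathbb{C}[JX_V]$, which only sees the pure-$T$ part of the ideal. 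Exhibiting a concrete non-zero element of $\ker\varphi$ in the smallest conformal weight where the derivative terms make a difference, or equivalently comparing coefficients of the two Hilbert series in that weight, gives the required strict inequality.

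The principal obstacle is assertion (b), specifically the identification of the $L_{-2}^k\vac$ leading term of the singular vector and the Andrews-Gordon computation of the Hilbert series of $\mathbb{C}[JX_V]$ in the conformal-weight-2 convention. The Gordon identity itself is classical; the technical crux lies in translating between the weight conventions of arc spaces of affine schemes and the conformal grading on the vertex algebra.
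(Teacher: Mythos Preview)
Your outline is essentially the same strategy as the paper's: identify $R_V \cong \C[x]/(x^N)$ with $N=(p-1)(p'-1)/2$ via the nonvanishing $L_{-2}^N\vac$ coefficient of the singular vector, compute the Hilbert series of $J(\C[x]/(x^N))$, and compare with the known character of $\vir_{p,p'}$. For $(p,p')=(2,2k+1)$ both equal the Gordon product $\prod_{m \not\equiv 0,\pm 1 \bmod (2k+1)} (1-q^m)^{-1}$, forcing the surjection to be an isomorphism; for $p,p'\geq 3$ the two series disagree (the paper checks this for the Ising model $\vir_{3,4}$, where the first discrepancy is at conformal weight $9$).

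Two remarks. First, your heuristic for (c) is not correct: the image of the singular vector in $\gr_F \vir^c$ is \emph{exactly} the monomial $T^N$, with no derivative terms. The monomial $L_{-2}^N\vac$ lies in Li degree $0$, while every other monomial $L_{-n_1}\cdots L_{-n_s}\vac$ in the singular vector has $s<N$ and hence Li degree $\sum(n_i-2)>0$, so it vanishes in $\gr^0_F$. The strictness of the embedding for $p,p'\geq 3$ comes rather from the fact that $\gr_F$ of the \emph{ideal} generated by $v$ is strictly larger than the differential ideal generated by $\gr_F(v)=T^N$. Your fallback plan of directly comparing Hilbert series is the right one, and is what the paper does.

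Second, the Hilbert series of the arc algebra $\C[T,T',\ldots]/(\partial^n T^k)_{n\geq 0}$ is not an immediate consequence of Andrews--Gordon; one first needs to know that $\{[x^k]_m\}_{m\geq 0}$ is a Gr\"obner basis of the differential ideal in the grevlex order (this is the result of Bruschek--Mourtada--Schepers cited in the paper as Proposition~\ref{mour.prop}), after which the surviving monomials are counted by Gordon's partition identity. You correctly flag this computation as the technical crux, but you should be aware that it is this Gr\"obner basis step, rather than any grading convention, that carries the weight.
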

Regarding simple affine vertex algebras we prove the following result.
\begin{thm*}[\ref{thm:affine}]
Let $V$ be the simple affine vertex algebra $V_k(\mathfrak{sl}_2)$ at positive integral level $k \in \mathbb{Z}_{+}$. The natural embedding $\sing(V) \hookrightarrow JX_V$ is an isomorphism of schemes. In particular
\begin{align*}
\lim_{q \rightarrow 0} H^{\text{ch}}_1(X_{q}, \CA_V) = 0.
\end{align*}
\end{thm*}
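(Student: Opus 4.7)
My strategy is to reduce the theorem to a Hilbert series comparison and then invoke a combinatorial identity of Rogers--Ramanujan--Gordon type. Set $V = V_k(\mathfrak{sl}_2)$. Observe first that the final vanishing $\lim_{q \to 0} H_1^{\text{ch}}(X_q, \CA_V) = 0$ follows from Theorem~\ref{H1.thm.intro} combined with $\Hoch_1(\zhu(V)) = 0$; the latter holds because $V_k(\mathfrak{sl}_2)$ is rational for $k \in \Z_+$, so $\zhu(V)$ is a finite-dimensional semisimple algebra. The substance of the theorem is therefore the scheme-theoretic identification $\sing(V) \cong JX_V$.

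I would first pin down a presentation of the $C_2$-algebra. The maximal proper submodule of the universal affine $V^k(\mathfrak{sl}_2)$ is generated by the singular vector $v = e(-1)^{k+1}\vac$; under Li's filtration $v \in F^0 V$, and its image in $R_V$ is $\bar e^{k+1}$. Acting by zero modes of the $\mathfrak{sl}_2$ subalgebra produces the full $(2k+3)$-dimensional irreducible $\mathfrak{sl}_2$-subrepresentation of $\C[e,h,f]$ in degree $k+1$, with highest weight vector $\bar e^{k+1}$. The first claim to establish is that these elements generate the entire defining ideal $I_k$ of $R_V$, so that $R_V \cong \C[e, h, f]/I_k$ with $I_k = \langle \mathfrak{sl}_2 \cdot \bar e^{k+1} \rangle$.

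By construction, $\gr V$ is generated as a commutative differential algebra (with derivation induced by the translation operator) by $R_V$, so the natural map $\phi : \OO(JX_V) \to \gr V$ is surjective. Both sides carry a bigrading by Li degree and conformal weight, and $\phi$ respects it; thus $\phi$ is an isomorphism precisely when the two Hilbert series agree. On the one hand, $\gr V$ and $V$ have the same Hilbert series, namely the character of $V_k(\mathfrak{sl}_2)$ given by the Weyl--Kac formula. On the other hand, $\OO(JX_V)$ is the polynomial ring $\C[e_j, h_j, f_j : j \geq 0]$ modulo the differential ideal obtained by iteratively applying $\partial$ to the generators of $I_k$. Reducing monomials modulo this differential ideal leads to a counting problem for sequences of triples $(e_j, h_j, f_j)$ with difference conditions akin to those governing Feigin--Stoyanovsky principal subspaces and Meurman--Primc constructions for integrable $\widehat{\mathfrak{sl}_2}$-representations at level $k$.

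The main obstacle is this last combinatorial step. Concretely, after choosing a monomial order I would exhibit a spanning set of normally ordered monomials in $\OO(JX_V)$ indexed by ``admissible'' partitions, prove their linear independence (either via an explicit filtration on $\gr V$ transported back along $\phi$, or by comparison with known monomial bases of integrable $\widehat{\mathfrak{sl}_2}$-modules), and close the argument by appealing to the Andrews--Gordon / Meurman--Primc sum--product identities to match the resulting generating function against the Weyl--Kac character. The delicate point is to verify that no hidden relations among the arc generators appear beyond those forced by the differential closure of $I_k$; once that is controlled, the Hilbert series comparison finishes the proof.
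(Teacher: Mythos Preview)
Your strategy---Hilbert series comparison closed by Meurman--Primc---is exactly the paper's. Where you are vague, the paper is concrete, and that concreteness is the actual content of the proof.

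The paper runs a sandwich. With $R = J\C[e,h,f]$, the grevlex order on $e_0 > h_0 > f_0 > e_1 > \cdots$, and $I$ the differential ideal generated by $\ad(f)^i e^{k+1}$ for $0 \le i \le 2k+2$, the technical heart is Lemma~\ref{K.inside}: for each monomial $M$ saturating one of the Meurman--Primc difference-two conditions~\eqref{MPRels}, one exhibits an explicit generator $[\ad(f)^i e^{k+1}]_m \in I$ whose leading term is $M$. This yields an inclusion $K \subset \LT(I)$ for $K$ the monomial ideal cut out by~\eqref{MPRels}, hence $H_{\LT(I)}(q) \leq H_K(q)$. Combined with the surjection $JR_V \twoheadrightarrow \gr^F V$ and the Meurman--Primc basis of $V$ itself (which gives $H_K(q) = \chi_V(q)$), one closes
\[
\chi_V(q) = \chi_{\gr^F V}(q) \leq H_I(q) = H_{\LT(I)}(q) \leq H_K(q) = \chi_V(q).
\]
Your ``delicate point'' about hidden relations in the arc algebra is therefore not verified directly; it is squeezed away by the sandwich.

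Two remarks on your sketch. First, ``exhibit a spanning set of admissible monomials in $\OO(JX_V)$'' is precisely the statement $K \subset \LT(I)$, and that is where the honest work lies---you have not indicated how you would produce elements of $I$ with the required leading terms, and this step is not formal. Second, proving linear independence of those monomials in $JR_V$ ``via a filtration on $\gr V$ transported back along $\phi$'' cannot work as written: $\phi$ is a surjection, and linear independence does not pull back along surjections. The paper sidesteps this entirely: independence is established in $V$ (by Meurman--Primc), not in $JR_V$, and the chain of inequalities forces it in $JR_V$ a posteriori.
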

Our proof of this theorem uses a result of Meurman-Primc \cite{MPbook} on PBW-type bases of integrable $\widehat{\mathfrak{sl}}_2$-modules.

In general the question of for which vertex algebras $V$ the condition (\ref{SS.condition}) holds appears to be a subtle and interesting one. We find it particularly interesting that Rogers-Ramanujan type $q$-series identities appear in the comparison of the graded dimension of $V$ with that of $JR_V$. 

\end{nolabel}


\begin{nolabel}
We briefly explain why condition \eqref{SS.condition} appears in Theorem \ref{H1.thm.intro}; the key point is the following vanishing condition on Koszul homology of arc spaces. Let $A$ be a commutative $\C$-algebra and let $\tau : \Omega^1_{A/\C} \rightarrow A$ be a derivation. We have the associated Koszul complex $K_\bullet = K_\bullet^A$ defined by $K_\bullet^A := \Sym \Omega^1_{A/\C}[1]$. Now let $A^0$ be a commutative algebra of finite type, $X=\Spec A^0$ and let $JA^0$ be the coordinate ring of the arc space $JX$, that is $\Spec JA^0 = JX$. The algebra $JA^0$ comes equipped with a canonical derivation and it turns out that the corresponding complex $K_\bullet^{JA^0}$ is acyclic away from degree $0$. In Section \ref{sec:hoch} we prove the following converse
\begin{thm*}[\ref{thm:ssjets}]
Let $A = \bigoplus_{n \in \Z_{\geq 0}} A^n$ be a $\mathbb{Z}_{+}$-graded commutative algebra with a derivation $\tau$ of degree $+1$, and let $(K^A_\bullet, \iota_\tau)$ be the Koszul complex associated with $A$ as above. We assume $A$ is generated by $A^0$ as a differential algebra, and that $A^0$ is an algebra of finite type. Then $H_{-1}(K_\bullet^A, \iota_\tau) = 0$  if and only if $A \cong JA^0$.
\end{thm*}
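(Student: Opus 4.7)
The implication $A \cong JA^0 \Rightarrow H_{-1}(K^A_\bullet, \iota_\tau) = 0$ is immediate from the acyclicity of $K^{JA^0}_\bullet$ in negative degrees stated just before the theorem. For the converse, the universal property of the arc space extends the inclusion $A^0 \hookrightarrow A$ to a surjection $\pi\colon JA^0 \twoheadrightarrow A$ of $\mathbb{Z}_+$-graded differential algebras, and the goal is to show its kernel $I$ vanishes. Suppose for contradiction that $I \neq 0$. Since $\pi$ restricts to the identity on $A^0$ one has $I_0 = 0$, so let $n \geq 1$ be minimal with $I_n \neq 0$ and pick $0 \neq f \in I_n$.

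The central observation is that $\iota_\tau\colon \Omega^1_{JA^0} \to JA^0$ surjects onto the augmentation ideal $(JA^0)_{\geq 1}$, because each generator $dy_i^{(j)}$ is sent to $y_i^{(j+1)}$. Since $I \subseteq (JA^0)_{\geq 1}$, there exists $\tilde\omega \in \Omega^1_{JA^0}$ of degree $n-1$ with $\iota_\tau(\tilde\omega) = f$. Let $\omega \in \Omega^1_A$ denote its image; then $\iota_\tau(\omega) = \pi(f) = 0$, so $\omega$ is a Koszul cycle. I claim that $\omega$ is not a Koszul boundary, which will contradict the vanishing of $H_{-1}(K^A_\bullet, \iota_\tau)$ and complete the proof.

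Suppose that $\omega = \iota_\tau(\eta)$ for some $\eta \in \Lambda^2 \Omega^1_A$, and lift $\eta$ arbitrarily to $\tilde\eta \in \Lambda^2 \Omega^1_{JA^0}$. The difference $\iota_\tau(\tilde\eta) - \tilde\omega$ maps to zero in $\Omega^1_A$, so by the conormal sequence for $\pi$ it lies in the submodule $I \cdot \Omega^1_{JA^0} + JA^0 \cdot dI$. Applying $\iota_\tau$ (using its $JA^0$-linearity and $\iota_\tau^2 = 0$) yields $f \in I \cdot (JA^0)_{\geq 1} + JA^0 \cdot \tau(I)$ inside $JA^0$. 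By the minimality of $n$, elements of $I \cdot (JA^0)_{\geq 1}$ live in degrees $\geq n+1$, and $\tau(I) \subseteq I_{\geq n+1}$ forces $JA^0 \cdot \tau(I)$ into degrees $\geq n+1$ as well. Consequently the degree-$n$ component on the right-hand side vanishes, contradicting $f \neq 0$. The main subtlety of the argument is identifying the correct explicit witness $\omega$ attached to a minimal-degree element $f \in I$; once the relation $\iota_\tau(\tilde\omega) = f$ is used to define $\omega$, the degree bookkeeping in the final step is essentially automatic.
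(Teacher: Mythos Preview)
Your argument for the converse direction ($\Rightarrow$) is correct and is essentially the paper's own proof, recast in the language of the conormal sequence: where the paper writes the kernel element explicitly as $\sum a_i\,db_i$ with $a_i\in I$ or $b_i\in I$, you invoke $\ker(\Omega^1_{JA^0}\to\Omega^1_A)=I\cdot\Omega^1_{JA^0}+JA^0\cdot dI$, and the final degree count is identical.

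The forward direction ($\Leftarrow$), however, is a genuine gap. The only place the paper asserts that $K^{JA^0}_\bullet$ is acyclic in negative degree is the informal preview in the introduction; nothing in the section preceding the theorem establishes it, and its $H_{-1}$ case is precisely the implication you are asked to prove, so your appeal is circular. The paper supplies a real argument: for $A^0=k[x_1,\dots,x_n]$ one observes that the elements $x_i^{(j+1)}=\iota_\partial(dx_i^{(j)})$ form a regular sequence in $JA^0$, so $K^{JA^0}_\bullet$ is the Koszul complex of a regular sequence and hence acyclic; for general $A^0$ of finite type one then proves an inductive step: if $H_{-1}(K^{JA^0}_\bullet)=0$ and $B^0=A^0/(f)$, then $H_{-1}(K^{JB^0}_\bullet)=0$. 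This step requires lifting a cycle $\bar\omega\in\Omega^1_{JB^0}$ to $\omega\in\Omega^1_{JA^0}$, subtracting correction terms $c_k\,d(\partial^{k-1}f)$ and $d_i f\,de_i$ so that $\iota_\partial$ of the result vanishes, and then using the hypothesis on $JA^0$. Your proposal omits this half of the proof.
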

The subcomplex $B^\bullet \subset A^\bullet(q=0)$ mentioned above acquires a filtration induced by the Li filtration on $V$. We identify the associated graded complex $\gr^F(B^\bullet)$ with the Koszul complex $(K^{\gr^F(V)}_\bullet, \iota_T)$ associated as above with the singular support of $V$. Thus condition \eqref{SS.condition} arises naturally as a sufficient condition for vanishing of $H^{-1}(B^\bullet)$. 
\label{no:reason-condition}
\end{nolabel}

\begin{nolabel}
Although relatively short, this article uses a variety of somewhat involved tools from different fields. Here is a brief summary.
\begin{enumerate}[label=\alph*]
\item
We use the theory of chiral algebras and chiral homology as developed in \cite{BD.Chiral}. We obtain the complex $A^\bullet(q)$ presented in Section \ref{sec:ch.hom.ell}, which computes chiral homology of the elliptic curve $X_q$ in low degrees, by a translation to linear algebra of Beilinson-Drinfeld's complex for chiral homology with supports. The main ingredients here are the use of the marked point (the polarisation) of the elliptic curve and a well-defined \'{e}tale coordinate. The support is taken to be the marked point, with the vacuum module insertion.
\item We use Totaro's theorem \cite{Totaro} on cohomology rings of configuration spaces of a manifold $X$. This is explained in Section \ref{sec:totaro}. We also use explicit representatives of cohomology classes in the case of $X=X_q$ an elliptic curve. These are described in terms of elliptic functions.
\item We use elliptic function identities, such as \eqref{eq:heat}, to study the behaviour of $A^\bullet(q)$ in the nodal curve limit $q \rightarrow 0$, specifically to find the subcomplex $B^\bullet \subset A^\bullet({q=0})$. 
\item We use Li's filtration \cite{Lifilt} to endow $B^\bullet$ with a filtration and we consider the corresponding spectral sequence. The associated graded of $B^\bullet$ is identified with the Koszul complex of the singular support of $V$ as explained above. 
\item We use a description \cite{Zhu.Note} of the multiplication in the Zhu algebra $\zhu(V)$ as a nodal curve limit $q \rightarrow 0$ of the operation $a_{(\zeta)}b := \res_z \zeta(z,q) a(z)b$. In this way we identify the quotient $A^\bullet(q=0)/B^\bullet$ with the bar complex computing the Hochschild homology of $\zhu(V)$. 
\item We use a result of Bruschek, Mourtada and Schepers \cite{mourtada} on the Hilbert series of certain arc spaces, and their relation with Rogers--Ramanujan identities, to identify which minimal models $\vir_{p,p'}$ satisfy condition \eqref{SS.condition} and which ones do not.
\item We use Gr\"{o}bner basis techniques, and a result of Meurman and Primc
\cite{MPbook}, to compute the Hilbert series of the arc space of the associated
scheme of the simple affine vertex algebra $V_k(\mathfrak{sl}_2)$.
\end{enumerate}
\label{no:list-tools}
\end{nolabel}

\begin{nolabel}
Although the ultimate objective of the techniques proposed in this article is to answer and generalise Beilinson and Drinfeld's question {\cite[4.9.10]{BD.Chiral}}, the article is written in such a way that readers better acquainted with the theory of vertex algebras than that of chiral algebras may understand the main statements and proofs. The complex constructed in Section \ref{sec:ch.hom.ell} does not use the chiral algebra formalism and is canonically associated to a vertex algebra and an elliptic curve. Of course in order to check that this complex indeed computes chiral homology, one needs to compare with Beilinson and Drinfeld's construction. The reader acquainted with chapter $4$ of \cite{BD.Chiral} or chapter $20$ of \cite{FBZ.Book} will find this comparison self-evident.

We use Weierstrass's $\wp$-function and its integral $\zeta$, and the differential equation \eqref{eq:heat} and some explicit algebraic equations satisfied by these functions in Section \ref{sec:elliptic.functions} as well as their properties as $q \rightarrow 0$. It would be interesting to have a purely algebro-geometric formulation of our results here in terms of sections of $\mathcal{O}_X(2)$ instead of elliptic functions and differential equations. 
\label{no:compare-BD}
\end{nolabel}

\begin{nolabel}
Here are some subjects that are not treated on this article and that we plan to address in the following articles of this series.
\begin{enumerate}
\item Chiral homology of the general elliptic curve $X_q$, not only in the limit $q \rightarrow 0$. Zhu's technique for handling $H^{\text{ch}}_0$ is based on the construction of explicit representatives of homology classes, as traces on $V$-modules. These representatives are shown to be flat sections of the connection on the space of conformal blocks, i.e., Zhu shows that the trace functions satisfy certain explicit differential equations. Beilinson and Drinfeld endow their chiral complexes with Gauss-Manin connections along the moduli space of curves, hence we have at our disposal differential equations generalising those considered by Zhu. What remains is to develop higher analogues of \emph{traces of modules}. We expect that extensions of $V$-modules provide examples of non-trivial classes in chiral homology. 
\item Chiral homology in degree $2$ and higher. To compute higher chiral homologies we use an explicit description of the de Rham cohomology classes of configuration spaces of an elliptic curve. It is surprising that until quite recently even the Betti numbers were not available (see for example \cite{knudsen,maguire,sch16}). It is relatively easy to find a quotient complex of the chiral chain complex isomorphic to the bar complex of $\zhu(V)$. However proving acyclicity of the kernel becomes more difficult due to the more involved combinatorics of spaces of configurations of more than $3$ points.
\item Chiral homology of higher genus curves. The sheaves of conformal blocks (at least in the rational case) can be extended to the boundary of the moduli spaces \cite{tsuchiya.ref}. We are not aware of a similar result for chiral homology. With the tools we develop in hand we expect to be able to reduce the computation of chiral homologies of vertex algebras to the case of elliptic curves.
\end{enumerate}
\label{no:project}
\end{nolabel}

\begin{rem}
The original question of Beilinson and Drinfeld for affine Kac-Moody vertex algebras at integral level has been answered positively by Dennis Gaitsgory \cite{gaitsgory-private} using a theorem of Teleman about the geometry of the affine Grassmannian. In the case of the universal affine Kac-Moody algebra, Sam Raskin is able to show that (the $q=0$ limit of) chiral homology coincides with Hochschild homology by studying directly the Beilinson-Drinfeld Grassmanian over the nodal curve \cite{sam-private}. 

\label{rem:known-results}
\end{rem}

\begin{nolabel}{\em Acknowledgements.} The authors would like to thank T. Arakawa, D. Gaitsgory, S. Kanade, A. Moreau and S. Raskin for useful remarks. We would especially like to express our gratitude to S. Kanade for drawing our attention to the work of Meurman and Primc \cite{MPbook}, which is crucial to the proof of Theorem \ref{thm:affine}. We would also like to thank the anonymous referees for very detailed and constructive remarks that greatly improved the exposition, especially in Section \ref{sec:ch.hom.ell}. JvE was supported by CNPq grants 409582/2016-0 and 303806/2017-6 and by the Serrapilheira Institute (grant number Serra – 1912-31433). RH was supported by CNPq grants 409582/2016-0 and 305688/2019-7.
\end{nolabel}


\section{Vertex Algebras}\label{sec:va}

\begin{nolabel}
For background we refer to the text \cite{Kac.VA.Book}. The formal delta function is defined to be $\delta(z, w) = \sum_{n \in \Z} z^{-n-1} w^n$, and the formal residue of the power series $f(z) = \sum_{n \in \Z} f_n z^n$ is defined by $\res_z f(z) = f_{-1}$. The symbol $i_{z,w}$, resp. $i_{w,z}$, denotes expansion of an element of $\C[[z, w]][z^{-1}, w, w^{-1}, (z-w)^{-1}]$ as a Laurent series in $w$, resp. $z$. For $A$ an endomorphism of a vector space, we shall generally use the notation $A^{(j)}$ for $A^j / j!$.
\end{nolabel}

\begin{nolabel}
A vertex algebra is a vector space $V$ equipped with a vacuum vector $\vac$ and a collection of bilinear products indexed by integers. The $n^{\text{th}}$ such product of $a, b \in V$ is denoted $a{(n)}b$. These products are to satisfy the quantum field property
\begin{align*}
\text{$a(n)b = 0$ for $n \gg 0$}, 
\end{align*}
the Borcherds identity
\begin{align} \label{Borcherds.identity}
\begin{split}
&\sum_{j \in \Z_{\geq 0}} \binom{m}{j} \left(a(n+j)b\right)(m+k-j)c \\
 & \phantom{.............} = \sum_{j \in \Z_{\geq 0}} (-1)^j \binom{n}{j} \left[ a(m+n-j)b(k+j)c - (-1)^n b(n+k-j)a(m+j)c \right]
\end{split}
\end{align}
for all $a, b, c \in V$ and $m, n, k \in \Z$, and the unit identity
\begin{align}\label{unit.identity}
\vac_{(n)}a = \delta_{n, -1}a \quad \text{for $n \in \Z$} \quad \text{and} \quad a_{(n)}\vac = \delta_{n, -1} a \quad \text{for $n \in \Z_{\geq -1}$}.
\end{align}
It is customary to associate with $a \in V$ its \emph{quantum field}
\[
Y(a, z) = a(z) = \sum_{n \in \Z} z^{-n-1} a(n) \in \en({V})[[z, z^{-1}]].
\]
The \emph{translation operator} $T \in \en({V})$ is defined by $Ta = a(-2)\vac$.

When written in terms of quantum fields the Borcherds identity becomes the following Jacobi identity
\begin{align}\label{VAJacobi.identity}
[a(x)b](w)c = \res_z \left(a(z)b(w)c \,i_{z,w} - b(w)a(z)c \,i_{w,z}\right) \delta(x, z-w).
\end{align}

The following are some useful consequences of the definitions: The translation invariance condition
\begin{align}\label{translation.identity}
[Ta](z) = \partial_z a(z), \quad \text{equivalently} \quad [Ta](n) = -na(n-1),
\end{align}
the skew-symmetry formula
\begin{align}\label{skewsymmetry.identity}
b(z)a = e^{zT}a(-z)b, \quad \text{equivalently} \quad b{(n)}a = -\sum_{j \in \Z_{\geq 0}} (-1)^{n+j} T^{(j)}(a{(n+j)}b),
\end{align}
and the commutator formula
\begin{align}\label{commutator.identity.1}
[a(z), b(w)] &= \sum_{j \in \Z_{\geq 0}} [a(j)b](w) \partial_w^{(j)}\delta(z, w), \\
\text{equivalently} \quad [a(m), b(n)] &= \sum_{j \in \Z_{\geq 0}} \binom{m}{j} \left[a(j)b\right](m+n-j).\label{commutator.identity.2}
\end{align}
\end{nolabel}

\begin{nolabel}
For later convenience we recall the $f$-product notation
\begin{align*}
a_{(f)}b = \res_z f(z) a(z)b.
\end{align*}
When necessary we abuse this notation, writing for example $a_{(x f(x))}b$ rather than introducing $g(x) = xf(x)$ and writing $a_{(g)}b$. As a consequence of the skew-symmetry identity we have the following lemma.
\end{nolabel}

\begin{lem}\label{lem:skew-symmetry}
Let $f \in \C((x))$. Then
\begin{equation}
a_{(f(x))}b + b_{(f(-x))}a = \sum_{j \in \Z_{\geq 0}} \frac{(-1)^j}{(j+1)!} T^{j+1}\left(a_{(x^{j+1} f(x))}b\right), \qquad \text{for all $a, b \in V$}.
\label{eq:lemma-sym}
\end{equation}
\end{lem}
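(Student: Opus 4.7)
The plan is to reduce the stated identity to the ordinary skew-symmetry relation \eqref{skewsymmetry.identity} by expanding $f(x) = \sum_{n} f_n x^n$ and matching coefficients of $f_n$ on both sides. Since for fixed $a, b \in V$ we have $a_{(n)}b = 0$ for $n$ sufficiently large, all sums involved are finite and may be manipulated freely.

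First I would unwind the $f$-product notation. On the left-hand side, $a_{(f(x))}b = \sum_n f_n\, a_{(n)}b$, and since $f(-x) = \sum_n (-1)^n f_n x^n$, we have $b_{(f(-x))}a = \sum_n (-1)^n f_n\, b_{(n)}a$. Now substitute the skew-symmetry identity \eqref{skewsymmetry.identity} into the second sum: using $b_{(n)}a = -\sum_{j \in \Z_+} (-1)^{n+j} T^{(j)}(a_{(n+j)}b)$ and the simplification $(-1)^n(-1)^{n+j} = (-1)^j$, one finds
\begin{equation*}
b_{(f(-x))}a = -\sum_{n}\sum_{j \in \Z_+} (-1)^j f_n\, T^{(j)}\bigl(a_{(n+j)}b\bigr).
\end{equation*}
The $j=0$ contribution here is $-\sum_n f_n\, a_{(n)}b$, which exactly cancels $a_{(f(x))}b$. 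Therefore the left-hand side of \eqref{eq:lemma-sym} simplifies to
\begin{equation*}
-\sum_{n}\sum_{j \geq 1} \frac{(-1)^j}{j!}\, f_n\, T^{j}\bigl(a_{(n+j)}b\bigr).
\end{equation*}

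Next I would compute the right-hand side of \eqref{eq:lemma-sym} and check that it gives the same expression. A direct residue calculation, using $\res_x x^{j+1} f(x) a(x)b = \sum_n f_n\, a_{(n+j+1)}b$, yields
\begin{equation*}
\sum_{j \in \Z_+} \frac{(-1)^j}{(j+1)!} T^{j+1}\bigl(a_{(x^{j+1}f(x))}b\bigr) = \sum_{n} \sum_{j \geq 0} \frac{(-1)^j}{(j+1)!}\, f_n\, T^{j+1}\bigl(a_{(n+j+1)}b\bigr).
\end{equation*}
Reindexing by $k = j+1$ converts the right-hand side into $-\sum_n \sum_{k \geq 1} \frac{(-1)^k}{k!}\, f_n\, T^{k}\bigl(a_{(n+k)}b\bigr)$, which agrees with the simplified left-hand side.

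There is no real obstacle: the lemma is essentially a bookkeeping exercise dressing up the classical skew-symmetry formula in $f$-product language. The only minor point requiring care is that $f(x)$ is allowed to be a Laurent series (an element of $k((x))$), so $n$ ranges over $\Z$; this is harmless because the quantum field property truncates the sums over $n$ from above once paired with $b$.
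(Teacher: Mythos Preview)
Your proof is correct and follows exactly the route the paper indicates: the lemma is stated as ``a consequence of the skew-symmetry identity'' without further detail, and what you have written is precisely the natural unwinding of \eqref{skewsymmetry.identity} in $f$-product notation. Your remark on finiteness (bounded below by $f \in k((x))$, bounded above by the quantum field property) is the only thing to check, and you have checked it.
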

\begin{nolabel}
Motivated by this identity we introduce the symbol
\begin{align}\label{Borcherds.product}
\int{\{a_{(f)}b\}} = \sum_{j \in \Z_{\geq 0}}\frac{(-1)^j}{(j+1)!} T^{j}(a_{(x^{j+1} f(x))}b).
\end{align}
We remark that the $f(x) = x^n$ case of this product already appeared in Borcherds' paper \cite{B86} where it was denoted $a \boldsymbol{\times}_n b$.
\end{nolabel}

\begin{nolabel}\label{sec:c2.def}
Let $V$ be a vertex algebra. The quotient $V / TV$ is well known to carry the structure of a Lie algebra with bracket given by $[a, b] = a(0)b$. The quotient $R_V = V / V(-2)V$ is known as Zhu's $C_2$-algebra. It is naturally a Poisson algebra with the commutative product given by $a \cdot b = a(-1)b$ and the Poisson bracket by $\{a, b\} = a(0)b$.
\end{nolabel}

\begin{nolabel}
Following \cite{FBZ.Book} we define the topological algebras $\OO = \C[[t]]$ and
$\CK = \C((t))$, and we consider the Lie algebras $\Der_0{\OO} \subset \Der{\OO} \subset \Der{\CK}$, defined by $\Der_0{\OO} = \bigoplus_{n \geq 0} \C L_{n}$, $\Der{\OO} = \bigoplus_{n \geq -1} \C L_{n}$ and $\Der{\CK} = \bigoplus_{n \in \Z} \C L_{n}$ where $L_n = -t^{n+1}\partial_t$. The group $\Aut{\OO}$ of continuous algebra automorphisms of $\OO$ has Lie algebra $\Der_0{\OO}$.

The Virasoro algebra $\vir = \Der{\CK} \oplus \C C$ is the universal central extension of $\Der{\CK}$. Explicitly
\[
[L_m, L_n] = (m-n) L_{m+n} + \frac{m^3-m}{12} \delta_{m, -n} C.
\]

A conformal vector of a vertex algebra $V$ is a vector $\om \in V$ whose associated quantum field $L(z) = \om(z) = \sum L_n z^{-n-2}$ furnishes $V$ with a representation of $\vir$ such that
\begin{itemize}
\item $C$ acts by a constant, called the central charge of $V$,

\item $L_0$ acts semisimply on $V$ with non-negative integral eigenvalues and finite dimensional eigenspaces,

\item $L_{-1}$ coincides with $T$.
\end{itemize}
A conformal vertex algebra is a vertex algebra together with a choice of conformal vector. A quasiconformal vertex algebra is a vertex algebra furnished with a representation of $\Der{\OO}$ such that
\begin{itemize}
\item For all $b \in V$ one has
\[
[L_m, b(n)] = \sum_{j \geq 0} \binom{m+1}{j} [L_{j-1}b](m+n+1-j)
\]
(cf. equation (\ref{commutator.identity.2})),

\item $L_0$ acts semisimply on $V$ with non negative integral eigenvalues, and $\Der_{>0}{\OO}$ acts locally nilpotently on $V$,

\item $L_{-1}$ coincides with $T$.
\end{itemize}
Let $V$ be a (quasi)conformal vertex algebra. We denote by $V_n$ the eigenspace of $L_0$ with eigenvalue $n$, and we say that elements of $V_n$ have \emph{conformal weight} $n$. Occasionally we denote by $\D(a)$ the conformal weight of an eigenvector $a$ of $L_0$.

The definition of (weak) $V$-module as a vector space $M$ equipped with the action of quantum fields $Y^M(a, z) \in \en(M)[[z, z^{-1}]]$ for all $a \in V$, satisfying appropriate analogues of the Borcherds identity and other axioms, is standard. A positive energy module over a quasiconformal vertex algebra $V$ is one equipped with a compatible grading by finite dimensional eigenspaces of $L_0$ with eigenvalues bounded below.
\end{nolabel}

\begin{nolabel}
We recall the definitions of the enveloping algebra $U(V)$ and the Zhu algebra $A(V)$ of a quasiconformal vertex algebra $V$. Firstly one defines the Lie algebra
\begin{align*}
\Lie(V) = V[t, t^{-1}] / (T + \partial_t)V[t, t^{-1}]
\end{align*}
with the Lie bracket $[at^m, bt^n] = \sum_{j \in \Z_{\geq 0}} \binom{m}{j} \left(a(j)b\right)t^{m+n-j}$. Next one equips $\Lie(V)$ with a $\Z$-grading by putting $\deg(at^m) = m+1-\D(a)$ and extends the grading to the universal enveloping algebra $U(\Lie(V))$. Using the grading it is possible to form a degreewise completion of $U(\Lie(V))$ in which the equality (\ref{Borcherds.identity}) of infinite sums makes sense. Finally one obtains the topological algebra $U(V)$ as the quotient of this completion by the relations (\ref{Borcherds.identity}) and (\ref{unit.identity}). The category of $V$-modules is naturally equivalent to the category of smooth $U(V)$-modules.

Now let $M$ be a $V$-module. Then the space of invariants $M^{U(V)_{>0}}$ is immediately seen to carry an action of the algebra
\begin{align*}
A(V) = U(V)_0 / (U(V) \cdot U(V)_{>0})_0.
\end{align*}
Zhu proved that the functor $M \mapsto M^{U(V)_{>0}}$ from the category of $V$-modules to the category of $A(V)$-modules induces a bijection between the sets of isomorphism classes of irreducible $A(V)$-modules and irreducible positive energy $V$-modules.

In fact Zhu introduced $A(V)$ in terms of the following very different presentation \cite{Z96}. For any $a, b \in V$ put
\begin{align*}
a \circ b = \res_w w^{-2} (1+w)^{\D(a)} a(w)b \,dw \quad \text{and} \quad
a * b = \res_w w^{-1} (1+w)^{\D(a)} a(w)b \,dw.
\end{align*}
Then $V \circ V \subset V$ turns out to be an ideal with respect to the operation $*$, which in turn descends to an associative product on the quotient $V / V \circ V$. Furthermore $A(V) \cong V / V \circ V$ as associative algebras.
\end{nolabel}

\begin{nolabel}\label{subsec.OO.def}
Geometrically $\OO$ represents the algebra of functions defined on the \emph{standard disc} $D = \Spec{\OO}$. A generator of the unique maximal ideal $t \C[[t]] \subset \OO$ represents a coordinate on $D$ vanishing at its unique closed point. Let us denote by $\coord$ the set of coordinates on $D$; this set is naturally an $\Aut\OO$-torsor and may be identified with $\C^\times t + t^2\C[[t]]$. Thus for any $f(t) \in \C^\times t + t^2\C[[t]]$ there exists a unique element $g \in \Aut\OO$ such that $g \cdot t = f(t)$. Introducing a second formal parameter $z$, we write $f_z(t) = f(z+t)-f(z) \in \C[[z, t]]$. 

Now let $V$ be a quasiconformal vertex algebra. The restriction of the $\Der{\OO}$-action on $V$ to $\Der_0{\OO}$ can be exponentiated to define an action of $\Aut\OO$. Given $f(t) \in \C^\times t + t^2\C[[t]]$ and $g \in \Aut{\OO}$ as above, we denote by $R(f)$ the image of $g$ under the map $\Aut\OO \rightarrow \en(V)$. The behaviour of the vertex operation $Y$ in $V$ under change of coordinate $z$ is governed by Huang's formula {\cite[Section 7.4]{Huang.2dCFT.Book}}
\begin{align}\label{huang.formula}
Y(a, z) = R(f) Y(R(f_z)^{-1}a, f(z)) R(f)^{-1}.
\end{align}
In \cite{FBZ.Book} Frenkel and Ben-Zvi construct chiral algebras from quasiconformal vertex algebras, and in this construction the formula (\ref{huang.formula}) plays a key role. We review the construction in Section \ref{FBZconstruction} below.



\end{nolabel}

\begin{nolabel}\label{huang.isom}
Let $V$ be a quasiconformal vertex algebra and let $\phi(z)$ denote the formal power series $e^{2\pi i z}-1$. In \cite{Z96} Zhu introduced the modified vertex operation $Y[-, z]$ on $V$ defined by
\begin{align*}
Y[a,z] = Y(e^{2\pi i z L_0}a, \phi(z)).
\end{align*}
Let us write $R = R(\phi)$. Then since $\phi_z(t) = e^{2\pi i z}\phi(t)$ we have $R(\phi_z) = R e^{-2\pi i z L_0}$, and (\ref{huang.formula}) yields
\begin{align}\label{eq:R.VA.isom}
Y[a,z] = R^{-1}Y(Ra,z)R.
\end{align}
So in fact $R : (V, Y[-,z]) \rightarrow (V, Y(-,z))$ is a vertex algebra isomorphism. 
This was used by Huang to uncover the following presentation of $A(V)$ \cite{Huang.duality}. Put
\begin{align}\label{f.and.g.def}
\begin{split}
f(z) &= 2\pi i \cdot \frac{e^{2\pi i z}}{e^{2\pi i z}-1} = z^{-1} + \pi i - \frac{\pi^2}{3}z - \frac{\pi^4}{45}z^3 \cdots, \\
g(z) &= (2\pi i)^2 \cdot \frac{e^{2\pi i z}}{(e^{2\pi i z}-1)^2} = z^{-2} + \frac{\pi^2}{3} + \frac{\pi^4}{15}z^2 + \cdots,
\end{split}
\end{align}
and
\begin{align*}
\zhu(V) = V / V_{(g)}V \quad \text{with product} \quad a b = a_{(f)}b.
\end{align*}
Then $\zhu(V)$ is an associative unital algebra isomorphic to $A(V)$. Indeed from (\ref{eq:R.VA.isom}) above we have $(Ra)_{(f)}(Rb) = R(a * b)$ and $(Ra)_{(g)}(Rb) = R(a \circ b)$, from which it follows that the linear isomorphism $R : V \rightarrow V$ descends to an isomorphism of algebras $A(V) \rightarrow \zhu(V)$. Before closing this section we record the formula \cite[equations (3.11) and (3.15)]{Zhu.Note}.
\begin{align}\label{eq:Zhu.commutator}
a_{(f)}b - b_{(f)}a = 2\pi i \, a(0)b \pmod{V_{(g)}V},
\end{align}
which will be used later in the paper. 
\end{nolabel}

\section{Chiral Algebras}\label{sec:chia}
\begin{nolabel}\label{subsec:Dmod}
First we recall some background material on $\CD$-modules \cite{Hotta.Book}. Let $X$ be a smooth complex (analytic or algebraic) variety. We denote by $\OO_X$ the structure sheaf of $X$, by $\Theta_X$ and $\Om_X$ the tangent and cotangent sheaves, by $\om_X$ the canonical sheaf (which is isomorphic to $\wedge^n\Om_X$) and by $\CD_X$ the sheaf of differential operators. The sheaf $\OO_X$ is a left $\CD_X$-module essentially by definition, while the Lie derivative
\begin{align}\label{eq:Lie.derivative}
\Lie_\tau (f_0 \cdot df_1 \wedge \ldots \wedge df_k) = \left( \tau f_0 \right) \cdot df_1 \wedge \ldots \wedge df_k + \sum_{i=1}^k f_0 \cdot df_1 \wedge \ldots \wedge d(\tau f_i) \wedge \ldots \wedge df_k
\end{align}
defines a right $\CD_X$-module structure on $\om_X$. For $\tau$ a vector field and $\nu$ a section of $\om_X$ this action is given by $\nu \cdot \tau = -\Lie_\tau(\nu)$. The categories of left and right $\CD_X$-modules are equivalent via the following side-changing operation: for $M$ a left $\CD_X$-module the corresponding right $\CD_X$-module is $M \otimes_{\OO_X} \om_X$ equipped with
\begin{align*}
(m \otimes \nu) \cdot \tau = -(\tau \cdot m) \otimes \nu + m \otimes (\nu \cdot \tau).
\end{align*}
We denote by $f^\bullet$ and $f_{\bullet}$ the sheaf theoretic pullback and pushforward along a morphism $f : X \rightarrow Y$. The pullback and pushforward of $\CD$-modules along $f$ is in general effected by the transfer bimodule $\CD_f$, which is the $(\CD_X, f^{\bullet}\CD_Y)$-bimodule over $X$ defined by
\begin{align*}
\CD_{f} = \OO_X \otimes_{f^{\bullet}\OO_Y} f^{\bullet}\CD_Y.
\end{align*}
The right action is tautological and the left action of $\CD_X$ is given in local coordinates by the formula
\begin{align*}
\tau(g \otimes s) = \tau(g) \otimes s + g \sum_{i=1}^{\dim(Y)} \tau(y_i \circ f) \otimes \frac{\partial s}{\partial{y_i}}.
\end{align*}
The pullback of the left $\CD_Y$-module $M$ is
\begin{align*}
f^*(M) = \CD_{f} \otimes_{f^{\bullet}\CD_Y} f^{\bullet}(M),
\end{align*}
and the functor $f^*$ is right exact. The pushforward of the right $\CD_X$-module $M$ is
\begin{align*}
f_*(M) = f_{\bullet}(M \otimes_{\CD_X} \CD_{f}).
\end{align*}
Since it involves the composition of a right exact functor with a left exact one, the functor $f_*$ is neither right nor left exact in general and the definition of its derived functors in general requires derived categories.

The pushforward along the closed embedding of a smooth subvariety has the following local description. Let $i : X \rightarrow Y$ be a closed embedding whose image is defined locally by the equations $y_i = 0$ for $i=k+1,\ldots,\dim(Y)$ and let $M$ be a right $\CD_X$-module. Then
\begin{align*}
i_*(M) = i_\bullet(M) \otimes_\C \C[\partial_{y_{k+1}}, \ldots, \partial_{y_{\dim(Y)}}]
\end{align*}
with the action of $\partial_{y_i}$ given by $\partial_{x_i} \otimes 1$ for $i=1,\ldots, k$, and by $1 \otimes \partial_{y_i}$ for $i=k+1,\ldots,\dim(Y)$, and the action of functions by restriction to $i(X)$. The most important example is the diagonal embedding $\D : X \rightarrow X^2$. Then
\begin{align}\label{eq:diagonal.pushforward}
\D_*(M) = \D_\bullet(M) \otimes_{\C[\partial]} \C[\partial_1, \partial_2]
\end{align}
where the action of $\C[\partial]$ on $\C[\partial_1, \partial_2]$ is by $\partial = \partial_1+\partial_2$.

The other operation we require is the $*X$ construction {\cite[p. 97]{Grothendieck-dR}}. For $X$ a divisor in $Y$ and $\CF$ an $\OO_Y$-module, the $\OO_Y$-module $\CF(*X)$ is defined to be
\begin{align*}
\CF(*X) = \lim_{\rightarrow}{\Hom_{\OO_Y}(I^n, \CF)},
\end{align*}
where $I$ is an ideal of definition of $X$. This construction is independent of the choices made, and works in both the algebraic and analytic categories. In the algebraic category we have $M(*X) \cong j_*j^*M$ and so, following \cite{BD.Chiral}, we use the latter notation. In Section \ref{sec:ch.hom.ell} we pass from the algebraic to analytic context, and it pays to note that the image under the analytification functor of $j_*j^*\CF$ is $\CF(*X)$ and not $(j^{\text{an}})_*(j^{\text{an}})^*\CF$, which is very much larger.
\end{nolabel}

\begin{nolabel}\label{no:general.diagonal.def}
Next we recall the key notion of chiral operation from \cite{BD.Chiral}. Let $I$ be a finite set and let $X$ be a smooth complex algebraic curve. The open embedding into $X^I$ of
\begin{align}\label{eq:standard.open}
U^{(I)} = \{\left(x_i\right)_{i \in I} \in X^I | \text{$x_{i_1} \neq x_{i_2}$ whenever $i_1 \neq i_2$}\}
\end{align}
is denoted $j^{(I)}$, and the closed embedding into $X^I$ of the diagonal
\begin{align*}
\D^{(I)} = \{\left(x_i=x\right)_{i \in I} | x \in X \}.
\end{align*}
is denoted $\D^{(I)}$.

Now let $\{L_i\}_{i \in I}$ be a collection of right $\CD_X$-modules parametrised by the finite set $I$, let $M$ be another right $\CD_X$-module, and write $j = j^{(I)}$ and $\D = \D^{(I)}$. 
A \emph{chiral $I$-operation} {\cite[3.1]{BD.Chiral}} from $\{L_i\}_{i \in I}$ to $M$ is by definition a morphism of right $\CD_{X^I}$-modules
\begin{align}\label{ch.I.op}
j_*j^*\left(\boxtimes_{i \in I} L_i\right) \rightarrow \D_* M.
\end{align}
The vector space of chiral $I$-operations (\ref{ch.I.op}) is denoted $P^{\text{ch}}(\{L_i\}_{i \in I}, M)$.

The Grothendieck residue morphism is defined, for any right $\CD_X$-module $M$, to be the natural surjection
\begin{align}\label{res.def}
\res : j_*j^*(\om_X\boxtimes M) \rightarrow \frac{j_*j^*(\om_X\boxtimes M)}{\om_X\boxtimes M} \cong \D_*(M).
\end{align}
This is the archetypal example of a chiral operation, i.e., $\res \in P^{\text{ch}}(\{\om_X, M\}, M)$.
\end{nolabel}

\begin{nolabel}
Chiral operations may be composed and in this way the category of right $\CD_X$-modules becomes what is known as a pseudo-tensor category {\cite[1.1]{BD.Chiral}}. In such a category, which generalises the notion of symmetric monoidal category, it is possible to define the notion of algebra over an operad. Thus a (non-unital) chiral algebra over $X$ is formally defined to be an algebra over the Lie operad in the pseudo-tensor category of right $\CD_X$-modules and chiral operations {\cite[Section 3.3]{BD.Chiral}}. In more concrete terms a chiral algebra over $X$ is a right $\CD_X$-module $\CA$ together with a chiral operation $\mu \in P^{\text{ch}}(\{\CA, \CA\}, \CA)$ satisfying analogues of the usual skew-symmetry and Jacobi identities. This is spelled out in Section \ref{sec:chevalley-cousin} below.

\end{nolabel}

\begin{nolabel}\label{FBZconstruction}
Let $X$ be a smooth complex algebraic curve and $V$ a quasiconformal vertex algebra. We review the construction from $V$ of a chiral algebra over $X$ \cite{FBZ.Book} (see also \cite{Zhu.global}). A coordinate at a closed $\C$-point $x \in X$ is, as in Section \ref{subsec.OO.def}, a generator of the unique maximal ideal in the local ring $\OO_{x} \cong \OO$. The set of pairs $(x, t_x)$ consisting of a point $x \in X$ and a coordinate $t_x$ at $x$ is the set of $\C$-points of a scheme $\coord_X$ which is, furthermore, an $\Aut{\OO}$-torsor over $X$. Applying the associated bundle construction to $\coord_X$ and the $\Aut{\OO}$-module $V$ yields the vector bundle
\begin{align*}
\CV = \coord_X \times_{\Aut{\OO}} V
\end{align*}
over $X$. The bundle $\CV$ carries a connection $\nabla : \CV \rightarrow \CV \otimes \Om_X$, thus a left $\CD_X$-module structure. The connection is defined relative to a choice of local coordinate by
\begin{align*}
\nabla_{\partial_z} = \partial_z + T,
\end{align*}
but is independent of this choice. The right $\CD_X$-module obtained from $\CV$ by side-changing is denoted $\CA$. It carries the natural structure of a chiral algebra which we now recall {\cite[Theorem 19.3.3]{FBZ.Book}}.

For $x \in X$ one denote by $D_x = \Spec \OO_x$ the disc centred at $x$. A choice of coordinate at $x$ induces an isomorphism $D_x \cong D$. The chiral operation $\mu$ of $\CA$ is determined by its restrictions to $D_x^2$ for $x \in X$, and these restrictions are expressed in terms of the vertex operation and the residue map (\ref{res.def}) as follows. Let $z$ be a coordinate at $x$ and $(z_1, z_2)$ the coordinate on $D_x^2$ induced by $z$. The restriction of the chiral operation
\begin{align*}
\mu : j_*j^*(\CA \boxtimes \CA) \rightarrow \frac{j_*j^*(\om \boxtimes \CA)}{\om \boxtimes \CA}
\end{align*}
to $D_x^2$ is given by
\begin{align}\label{FBZ.chiral.op}
\mu\left( f(z_1, z_2) a dz_1 \boxtimes b dz_2 \right) = f(z_1, z_2) dz_1 \boxtimes a(z_1-z_2)b dz_2
\end{align}
for all $a, b \in V$ and $f \in \C[[z_1,z_2]][(z_1-z_2)^{-1}]$. On the other hand, in terms of the description of the $\CD$-module pushforward given in (\ref{eq:diagonal.pushforward}) the chiral product $\mu$ looks as follows (cf. \cite[19.1.5]{FBZ.Book}). We have the identification
\begin{align*}
\G(D_x^2, \D_*\CA) = \G(D_x, \CA) \otimes_{\C[\partial]}\C[\partial_1, \partial_2],
\end{align*}
where $\partial$ acts on $\G(D_x, \CA)$ as by $\partial_z$, $\partial_i = \partial_{z_i}$, and $\partial = \partial_1 + \partial_2$. In these terms $\mu$ is given by
\begin{align}\label{FBZ.lambda.op}
\mu\left( f(z_1, z_2) a dz_1 \otimes b dz_2 \right) = \res_{z_1=z_2} e^{(z_1-z_2)\vec{\partial}_1} f(z_1,z_2) a(z_1-z_2)b \otimes 1,
\end{align}
where the residue symbol $\res_{z_1=z_2}$ indicates to write $z_1$ as $(z_1-z_2)+z_2$, expand the expression in positive powers of $z_1-z_2$ and extract the coefficient of $(z_1-z_2)^{-1}$. The notation $\vec{\partial}_1$ indicates to remove all powers of $\partial_1$ to the right-hand side of the tensor product symbol.

The Jacobi identity satisfied by $\mu$, i.e., the vanishing of the composition (\ref{Cousin.complex}) below, corresponds at the level of the vertex algebra $V$ to the Borcherds identity (\ref{Borcherds.identity}), while skew-symmetry of $\mu$ corresponds to the skew-symmetry identity (\ref{skewsymmetry.identity}). 

A choice of coordinate $z$ at a point $x \in X$ now induces identifications 
\begin{align}
\label{mu.to.f}
\begin{split}
\xymatrix{
\G(D_x^\times, \CA) \otimes \CA_x \ar@{->}[r]^-{\mu} \ar@{->}[d]^{\cong} & \CA_x \ar@{->}[d]^{\cong} \\
V((z))dz \otimes V \ar@{->}[r] & V, \\
}
\end{split}
\end{align}
where the morphism in the lower line is the $f$-product
\begin{align*}
a f(z) dz \otimes b \mapsto \res_z f(z) a(z)b dz = a_{(f)}b.
\end{align*}

\end{nolabel}

\section{Conformal Blocks}\label{sec:conformal.blocks}

\begin{nolabel}
Let $X$ be a smooth complex algebraic curve and $V$ a quasiconformal vertex algebra. The inclusion $D_x^\times \hookrightarrow X \backslash x$ induces a map
\begin{align*}
\G(X, \CA(*x)) \rightarrow \G(D_x^\times, \CA),
\end{align*}
and one may consider the vector space of coinvariants
\begin{align}\label{eq:cb.def.raw}
H(X, x, V) = \frac{\CA_x}{\G(X, \CA(*x)) \cdot \CA_x}.
\end{align}
The dual of this space is known as the space of conformal blocks associated with $X, x$ and $V$. The conformal blocks of an elliptic curve is the central object of Zhu's paper \cite{Z96}.
\end{nolabel}

\begin{nolabel}
The vertex algebra $V$ carries an increasing filtration $A^\D{V} = \bigoplus_{n \leq \D} V_n$, which is stable under the linear automorphism $R(f)$ associated with any change of coordinate $f$ as in Section \ref{subsec.OO.def}. Therefore $A^\D{V}$ induces a filtration $A^\D\CV$ of $\CV$ by vector subbundles of finite rank. It follows from the construction of $\CV$ that the successive quotients are direct sums of tensor powers of the tangent bundle, i.e.,
\begin{align}\label{c.w.filtr}
0 \rightarrow A^{\D-1}\CV \rightarrow A^{\D}\CV \rightarrow (\Theta_X^{\otimes \D})^{\oplus \dim{V_\D}} \rightarrow 0,
\end{align}
and so the associated graded takes the form
\begin{align*}
\gr^A\CV = \bigoplus_{\D \in \Z_{\geq 0}}\gr^A_\D\CV \quad \text{with} \quad \gr^A_\D \CV \cong (\Theta_X^{\otimes \D})^{\oplus \dim{V_\D}}.
\end{align*}
\end{nolabel}

\begin{nolabel}
We now work with $X$ a smooth elliptic curve and $x$ its marked point. We may fix $\tau \in \HH$ where $\HH$ denotes the complex upper half plane, and define $X$ in the analytic topology as $\C/\La$ where $\La = \Z + \Z\tau \subset \C$. The marked point $0=[\La] \in X$ and the addition induced from that in $\C$ gives $X$ the structure of an elliptic curve. Of course $X$ is algebraic, and the function
\begin{align}\label{ell.curve.embedding}
z \mapsto [u : v : w] = [\wp(z) : \wp'(z) : 1], \quad 0 \mapsto [0 : 1 : 0],
\end{align}
where $\wp(z)$ is the classical Weierstrass function (\ref{eq:def.Weierstrass.p}), descends to an embedding $X \subset \C P^2$, thus presenting $X$ as the zero set of a homogeneous cubic. To describe the coinvariants (\ref{eq:cb.def.raw}) explicitly, it is convenient to work with the local analytic coordinate induced from the standard coordinate $z$ on $\C$.


Since $X$ is an elliptic curve we have $\Theta_X \cong \OO_X$, and so $\gr^A\CV \cong V \otimes \OO_X$ as $\OO_X$-modules. In fact as $\OO_X$-modules we have
\begin{align}\label{eq:trivialisation.V}
\CV \cong \CA \cong V \otimes \OO_X.
\end{align}
Indeed the standard coordinate $z$ of $\C$ induces compatible trivialisations of $\CV_x$ at all points $x \in X$, so the assignment
\begin{align*}
a \otimes (dz)^{\otimes \D} \mapsto (z, a)
\end{align*}
(where on the right-hand side $z$ denotes, by abuse of notation, the formal coordinate induced by $z$ at $x$) defines a morphism $V_\D \otimes \Theta_X^{\otimes \D} \rightarrow A^\D\CV$. It follows that (\ref{c.w.filtr}) is split, and so we have $\CA \rightarrow V \otimes \OO_X$ as $\OO_X$-modules. Thus we obtain identifications
\begin{align*}
\G(X, \CA(*0)) \cdot \CA_x \cong \left(V \otimes \G(X, \OO_X(*0)) \right) \cdot V \cong \left<V_{(f)}V | f \in \G(X, \OO_X(*0))\right>,
\end{align*}
and so the space of coinvariants is
\begin{align}\label{Hdef}
H(X, 0, V) \cong \frac{V}{\left<V_{(f)}V | f \in \G(X, \OO_X(*0))\right>}.
\end{align}
We now describe the relation with Zhu's conformal blocks. Let $(V, Y[-, z])$ be Zhu's modified vertex algebra structure. The space of conformal blocks as defined and used by Zhu in \cite{Z96} is now the vector space dual to the space of coinvariants $H(X, 0, (V, Y[-, z]))$ given in (\ref{Hdef}). This is the content of {\cite[Proposition 5.2.1]{Z96}} parts (3) and (4). Since $R : (V, Y[-, z]) \rightarrow (V, Y(-,z))$ is an isomorphism of vertex algebras, the spaces of coinvariants associated with $(V, Y[-, z])$ and $(V, Y(-, z))$ are, of course, naturally isomorphic.
\end{nolabel}

\section{Preliminaries on Homology}\label{sec:homology}
In this section we gather some well-known homological algebra constructions for ease of reference and to fix conventions. These are the Chevalley complex and Lie algebra homology, the bar complex and Hochschild homology, and the de Rham cohomology of a $\CD$-module over a smooth variety.
\begin{nolabel}
Firstly we recall a few standard notational conventions. Let $A^\bullet$ be a (cohomological) complex in an abelian category, then $A^\bullet[k]$ signifies the complex $A^\bullet$ shifted $k$ places to the left, i.e., $A^n[k] = A^{n+k}$, with $d_{A[k]} = (-1)^k d_{A}$. The cone on a morphism $f : A^\bullet \rightarrow B^\bullet$ of complexes is defined as $\cone(f) = A^\bullet[1] \oplus B^\bullet$ with differential $d(a, b) = (-da, db + f(a))$.

Now let $\CC$ be a tensor category, i.e., an abelian symmetric monoidal category. The tensor product of the complexes $A^\bullet$, $B^\bullet$ in $\CC$ is the complex $(A^\bullet \otimes B^\bullet)^n = \bigoplus_{i+j=n} A^i \otimes B^j$ with differential $d$  specified by $d(a \otimes b) = (da)\otimes b + (-1)^{|a|}a \otimes (db)$ for $a$ and $b$ homogeneous of homological degrees $|a|$ and $|b|$ respectively. The symmetric algebra $\Sym(A^\bullet) = \bigoplus_{n \in \Z_{\geq 0}} \Sym_n(A^\bullet)$ on $A^\bullet$ is the sum of the quotients $\Sym_n(A^\bullet) = (A^\bullet)^{\otimes n}/\Sigma_n$ by symmetric group actions in which $a \otimes b$ is identified with $(-1)^{|a| \cdot |b|} b \otimes a$. The product in the symmetric algebra is given by $(a \otimes b) \cdot c = a \otimes b \otimes c$, etc.
\end{nolabel}

\begin{nolabel}\label{sec:lie.homology}
Let $\g$ be a Lie algebra over $\C$ and $U(\g)$ its universal enveloping algebra. The augmentation morphism is the morphism of unital algebras $U(\g) \rightarrow \C$ that sends $\g \subset U(\g)$ to $0$. The homology of $\g$ with coefficients in a $\g$-module $M$ is by definition the torsion
\begin{align*}
H_n(\g, M) = \Tor_n^{U(\g)}(\C, M).
\end{align*}
Lie algebra homology is computed by the (reduced) Chevalley complex defined as follows. Let the linear map $\delta : \g^* \rightarrow \La^2(\g^*)$ denote the transpose of the Lie bracket $[\cdot,\cdot] : \g \otimes \g \rightarrow \g$. Then $\delta$ may be extended uniquely to a derivation of the exterior algebra $\Sym(\g^*[-1])$. The Jacobi identity on $[\cdot,\cdot]$ implies that $\delta^2=0$. There is a perfect pairing between the exterior algebras $\Sym(\g[1])$ and $\Sym(\g^*[-1])$ extending that between $\g$ and $\g^*$, and the dual $d$ of $\delta$ with respect to this perfect pairing gives $\Sym(\g[1])$ the structure of a complex. The differential $d : \g \rightarrow \C$ vanishes, so as a complex $\Sym(\g[1])$ splits as the direct sum of $\C$ concentrated in degree $0$ and a complex $C^\bullet(\g)$ concentrated in negative degrees. This complex $(C^\bullet(\g), d)$ is the \emph{reduced Chevalley complex} of $\g$. Explicitly $d$ is given by
\begin{align}\label{CE.diff}
d \left(x_1 \wedge \ldots \wedge x_n\right)
= \sum_{ i < j} (-1)^{i+j} [x_i, x_j] \wedge x_1 \wedge \ldots \wedge \widehat{x}_i \wedge\ldots \wedge \widehat{x}_j \wedge \ldots \wedge x_n.
\end{align}
For $n\geq 1$ we have $H^{-n}(C^\bullet(\g), d) \cong H_n(\g, \C)$ the homology of $\g$ with coefficients in the trivial $\g$-module $\C$.

If $\CC$ is any symmetric monoidal category then the definitions of Lie algebra and of reduced Chevalley complex can be formulated in $\CC$. That is to say for $\g$ an algebra in $\CC$ over the Lie operad one may write down $C^\bullet(\g)$ as a complex in $\CC$ with differential (\ref{CE.diff}).
\end{nolabel}
\begin{nolabel}
Let $A$ be an associative unital $\C$-algebra. An $(A, A)$-bimodule is the same thing as a left module over the algebra $\envela = A \otimes_\C A^\text{op}$. The Hochschild homology of $A$ with coefficients in the $(A, A)$-bimodule $M$ is then the torsion
\begin{align*}
\Hoch_n(A, M) = \Tor_n^{\envela}(A, M).
\end{align*}
We denote the Hochschild homology $\Hoch_\bullet(A, A)$ of the bimodule $A$ simply by $\Hoch_\bullet(A)$. Hochschild homology is computed by the bar complex defined as follows. The free product $F = A * \C[\varepsilon]$ of unital algebras is made into an $(A, A)$-bimodule in the obvious way, and is made into a differential graded algebra by putting $\deg(A) = 0$, $\deg(\varepsilon) = -1$, $da = 0$ for all $a \in A$, and $d\varepsilon = 1_A$. We denote by $F_n$ the component of $F$ of degree $n$, and define the \emph{bar complex} $B_\bullet A$ of $A$ by $B_n{A} = F_{n-1}$ for all $n \leq 0$, with differential $d$. Via the augmentation map $F_{-1} \rightarrow F_0 = A$ the bar complex is a resolution of $A$ by free $\envela$-modules. From this resolution of $A$ we obtain the standard complex computing Hochschild homology $\Hoch_\bullet(A, M)$ of the $(A, A)$-bimodule $M$ as follows. For $n \in \Z_{\geq 0}$ the component in degree $-n$ of the complex is $A^{\otimes n} \otimes_\C M$, and the differentials are:
\begin{align}\label{eq:bar.complex.conv}
\begin{split}
d(a_0 \otimes \ldots \otimes a_n \otimes m)
= {} & a_1 \otimes \ldots \otimes a_n \otimes ma_0 \\
&+ \sum_{i=1}^{n} (-1)^{i} a_0 \otimes \ldots \otimes a_{i-2} \otimes a_{i-1} a_i \otimes a_{i+1} \otimes \ldots \otimes a_n \otimes m \\
& -(-1)^n a_0 \otimes a_1 \otimes \ldots \otimes a_{n-1} \otimes a_{n}m.
\end{split}
\end{align}
\end{nolabel}

\begin{nolabel}\label{no:derham.variety.defn}
Let $Y$ be a smooth complex variety of complex dimension $n$. The de Rham cohomology groups of the right $\CD_Y$-module $M$ are by definition the cohomology groups of the object
\begin{align*}
R\Gamma_{\DR}(Y, M) = R\Gamma(Y, \DR(M)) = R\Gamma(Y, M \otimes_{\CD_Y}^L \OO_Y)
\end{align*}
of the derived category of the category of sheaves of $\C$-vector spaces on $Y$.

The left $\CD_Y$-module $\OO_Y$ may be resolved as $\CD_Y \otimes_{\OO_Y} \Sym(\Theta_Y[1])$, and the object $M \otimes_{\CD_Y}^L \OO_Y$ thus represented by the complex of sheaves
\begin{align}\label{eq:DR.rep}
0 \rightarrow M \otimes_{\OO_Y} \wedge^n \Theta_Y \rightarrow \cdots \rightarrow M \otimes_{\OO_Y} \Theta_Y \rightarrow M \otimes_{\OO_Y} \OO_Y \rightarrow 0
\end{align}
(nonzero in degrees $-n$ through $0$) with differentials
\begin{align*}
d(m \otimes \xi_1 \wedge \cdots \wedge \xi_k)
= {} & \sum_i (-1)^i (m \xi_i) \otimes \xi_1 \wedge \cdots \wedge \widehat{\xi}_i \wedge \cdots \wedge \xi_k \\
&+ \sum_{i < j} (-1)^{i+j} m \otimes [\xi_i, \xi_j] \wedge \xi_1 \wedge \cdots \wedge \widehat{\xi}_i \wedge \cdots \wedge \widehat{\xi}_j \wedge \cdots \wedge \xi_k.
\end{align*}

\end{nolabel}

\section{Chiral Homology}\label{sec:ch.hom}

In this section we give a brief account of Beilinson and Drinfeld's theory of chiral homology \cite[Section 4]{BD.Chiral}.
\begin{nolabel}
Denote by $\mathcal{S}$ the category whose objects are non-empty finite sets and whose morphisms are surjective functions, and by $Q(I)$ the set of equivalence relations on the finite set $I$. The Ran space $\ran(X)$ of a topological space $X$ is the set of non-empty finite subsets of $X$, equipped with the strongest topology under which the obvious functions $X^I \rightarrow \ran(X)$ become continuous. To work with the Ran space of an algebraic variety $X$ using tools from geometry the notions of ``$!$-sheaf on $\XtotheS$'' and ``right $\CD$-module on $\XtotheS$'' are introduced as technical substitutes for the notions of ``sheaf of vector spaces on $\ran(X)$'' and ``right $\CD$-module on $\ran(X)$'', respectively.

Although in this article we adopt the framework introduced in the book \cite{BD.Chiral}, we remark that after its publication the theory of $\CD$-modules on Ran spaces has been substantially developed by Gaitsgory and collaborators, using the language of $(\infty, 1)$-categories. See for example \cite{Francis-Gaitsgory}.

Let $\pi : I \rightarrow R$ be a surjection of finite sets. As in {\cite[3.4.4]{BD.Chiral}} the open embedding of
\begin{align}\label{eq:U.j.i}
U^{[I/R]} = \{(x_i)_{i \in I} \in X^I | \text{$x_{i_1} \neq x_{i_2}$ whenever $\pi(i_1) \neq \pi(i_2)$}\}
\end{align}
into $X^I$ is denoted $j^{[I/R]}$. The natural inclusion $X^R \rightarrow X^I$ with image
\begin{align}\label{eq:D.j.i}
\D^{(I/R)} = \{(x_i)_{i \in I} \in X^I | \text{$x_{i_1} = x_{i_2}$ whenever $\pi(i_1) = \pi(i_2)$}\}
\end{align}
is written $\D^{(\pi)}$ or $\D^{(I/R)}$ by abuse of notation.

A $!$-sheaf $F$ on $\XtotheS$ {\cite[4.2.1]{BD.Chiral}} consists of a sheaf $F_{X^I}$ of vector spaces on $X^I$ for each finite set $I$ and a morphism $\theta^{(\pi)} : \D^{(\pi)}_*(F_{X^R}) \rightarrow F_{X^I}$ for each surjection $\pi : I \rightarrow R$, subject to the compatibility conditions
\begin{align}\label{shriek.compat}
\theta^{(\pi_1\pi_2)} = \theta^{(\pi_2)}\circ \D^{(\pi_2)}_*(\theta^{(\pi_1)}) \quad \text{and} \quad \theta^{(\text{id}_I)} = \text{id}.
\end{align}
Similarly a right $\CD$-module $M$ on $\XtotheS$ {\cite[3.4.10]{BD.Chiral}} consists of a right $\CD_{X^I}$-module $M_{X^I}$ for each finite set $I$ and a morphism $\theta^{(\pi)} : \D^{(\pi)}_*(M_{X^R}) \rightarrow M_{X^I}$ (where now $\D^{(\pi)}_*$ denotes pushforward of $\CD$-modules) for each surjection $\pi : I \rightarrow R$, satisfying (\ref{shriek.compat}).

Let $M$ be a right $\CD_X$-module. The assignment $M_{X^I} = \D^{(I)}_*{M}$, $\theta^{(\pi)} = \text{id}_{M_{X^J}}$ defines a right $\CD$-module on $\XtotheS$. We denote this assignment by $\D^{(\mathcal{S})}_*$.

Let $\{L_r\}_{r \in R}$ be a finite non-empty set of right $\CD$-modules on $\XtotheS$. One defines {\cite[3.4.10]{BD.Chiral}} a new right $\CD$-module $\bigotimes_{r \in R}^{\text{ch}} L_r$ on $\XtotheS$ by putting
\begin{align}\label{tensor.ch.def}
\left(\bigotimes{}^{\text{ch}}_{r \in R} L_r\right)_{X^I} = \bigoplus_{\pi : I \rightarrow R} j^{[I/R]}_* {j^{[I/R]}}^* \left( \boxtimes \left(L_r\right)_{X^{\pi^{-1}(r)}} \right).
\end{align}
A permutation of $R$ induces a permutation of the factors of the direct sum on the right-hand side of (\ref{tensor.ch.def}) and thus natural commutativity isomorphisms between differently ordered tensor products. In this way the category of right $\CD$-modules on $\XtotheS$ becomes a symmetric tensor category.
\end{nolabel}
\begin{nolabel}
Let $\CA$ be a chiral algebra on $X$. Then its image $\D^{(\mathcal{S})}_*\CA$ is a Lie algebra in the category of right $\CD$-modules on $\XtotheS$. The \emph{Chevalley-Cousin complex} $C(\CA)$ of $\CA$ is by definition the Chevalley complex of the Lie algebra $\D^{(\mathcal{S})}_*\CA$ as in Section \ref{sec:homology}.
\end{nolabel}

\begin{nolabel}\label{sec:chevalley-cousin}
It is clear that $C(\CA)_X = \CA[1]$, and that in general $C(\CA)_{X^I}$ is concentrated in cohomological degrees $-|I|, \ldots, -1$. For instance $C^{-1}(\CA)_{X^2} = \D_*\CA$ and
\begin{align*}
C^{-2}(\CA)_{X^2} = \left((\CA \otimes^{\text{ch}} \CA)_{X^2}\right)_{\Sigma_2} \cong j_*j^*(\CA \boxtimes \CA).
\end{align*}
Indeed $(\CA \otimes^{\text{ch}} \CA)_{X^2}$ is a direct sum of factors indexed by the two bijections $\pi : \{1, 2\} \rightarrow \{1, 2\}$ and the passage from the tensor algebra to the symmetric algebra corresponds to the application of coinvariants by the action of $\Sigma_2$ which exchanges these factors. Thus $C(\CA)_{X^2}$ is the complex
\begin{align*}
j_*j^*(\CA[1] \boxtimes \CA[1]) \rightarrow \D_*\CA[1],
\end{align*}
where the morphism is just the chiral product $\mu$. Similarly $C(\CA)_{X^3}$ is
\begin{align}\label{Cousin.complex}
j_*j^*(\CA[1]^{\boxtimes 3}) \rightarrow \bigoplus_{1 \leq k < \ell \leq 3} j^{[k, \ell]}_* {j^{[k, \ell]}}^*(\CA[1] \boxtimes \D^{(2)}_*\CA[1]) \rightarrow \D^{(3)}_*\CA[1],
\end{align}
where $j : X \rightarrow X^3$ is the embedding of the small diagonal, and where $j^{[k, \ell]}$ denotes the open embedding (\ref{eq:U.j.i}) associated with the surjection $\{1, 2, 3\} \rightarrow \{1,2\}$ that sends the (distinct) elements $k$ and $\ell$ to $2$ and sends the remaining element of $\{1, 2, 3\}$ to $1$. The morphisms are built from the chiral operation $\mu$, and to say that $\mu$ satisfies the Jacobi identity now just means that the composition (\ref{Cousin.complex}) vanishes. In general $C(\CA)_{X^I}$ is the following sum over the set $Q(I)$ of equivalence relations
\begin{align}\label{XJ.Cousin.complex}
C(\CA)_{X^I} = \bigoplus_{R \in Q(I)} C(\CA)_{I, R}, \quad \text{where} \quad C(\CA)_{I, R} = \D^{(I/R)}_* j^{[I/R]}_* {j^{[I/R]}}^* \CA^{\boxtimes R},
\end{align}
with $C(\CA)_{I, R}$ in degree $-|R|$, and differential built from copies of $\mu$. The Jacobi identity ensures that $C(\CA)_{X^I}$ is a complex.
\end{nolabel}

\begin{nolabel}\label{sec:colimit.S}
Let $M$ be a right $\CD$-module on $\XtotheS$. Then $\DR(M)$ is defined by the assignment
\begin{align*}
\DR(M)_{X^I} = \DR(M_{X^I}),
\end{align*}
regarded either as an object of the derived category of $!$-sheaves on $\XtotheS$ or else, making use of the explicit representative (\ref{eq:DR.rep}), as a complex of $!$-sheaves on $\XtotheS$. 

Let $F$ be a $!$-sheaf on $\XtotheS$. Then
\begin{align*}
I \mapsto \Gamma(X^I, F_{X^I})
\end{align*}
defines an $\mathcal{S}^{\text{op}}$-diagram in $\text{Vect}$:
\begin{align*}
\xymatrix{
{\G(X, F_{X})} \ar[r] &  {\G(X^2, F_{X^2})} \ar[r] \ar@/^0.5pc/[r] \ar@/^-0.5pc/[r] & {\G(X^3, F_{X^3})} & \cdots \\
}
\end{align*}
By definition the space of global sections $\Gamma(\XtotheS, F)$ is the colimit of the diagram, i.e., the initial object among all objects which receive a compatible system of morphisms from the diagram. The derived global sections functor $R\Gamma(\XtotheS, -)$, being the derived functor of a colimit, can be described as a homotopy colimit. We do not discuss homotopy colimits here, however, as they will ultimately not be required in this work.

The functor $R\G_{\DR}$ of de Rham cohomology for $\CD$-modules on $\XtotheS$ is now defined, as for varieties, as the derived composition
\begin{align*}
R\Gamma_{\DR}(\XtotheS, M) = R\Gamma(\XtotheS, \DR(M)),
\end{align*}
of $\DR$ and $\G(\XtotheS, -)$ {\cite[4.2.6(iv)]{BD.Chiral}}.

\end{nolabel}
\begin{nolabel}
Let $\CA$ be a chiral algebra on $X$. By definition {\cite[4.2.11]{BD.Chiral}} the chiral homology of $\CA$ is the de Rham cohomology of the Chevalley-Cousin complex $C(\CA)$. That is
\begin{align*}
H^{\text{ch}}_n(X, \CA) = H^{-n}(C^{\text{ch}}(X, \CA))
\end{align*}
where
\begin{align*}
C^{\text{ch}}(X, \CA) = R\Gamma_{\DR}(\XtotheS, C(\CA)).
\end{align*}
\end{nolabel}

\section{Chiral Homology of Elliptic Curves}\label{sec:ch.hom.ell}

\begin{nolabel}
We now apply the general theory of Beilinson and Drinfeld to the specific case of $X$ a smooth elliptic curve and $\CA$ the chiral algebra on $X$ associated with a quasiconformal vertex algebra $V$. We begin by describing the Chevalley-Cousin complexes and chiral chain complexes at the level of global sections algebraically. We then do the same for the complexes with coefficients in a module supported at a point. Although this variant is more complicated, it has the advantage of making it easier to apply the de Rham functor by eliminating contributions from higher derived global sections functors. Ultimately we obtain an explicit complex $A^\bullet$, given in Proposition \ref{prop:chhom.equals.A} below, which computes chiral homology of $X$ in degrees $0$ and $1$. The ingredients in the definition of $A^\bullet$ are the $f$-products in the vertex algebra $V$ and some spaces of functions $\mathring{\Gamma}_1 \cong \Gamma(X \backslash \{0\}, \OO)$ and $\mathring{\Gamma}_2 \cong \Gamma((X  \backslash \{0\})^2 \backslash \Delta, \OO)$. See (\ref{eq:G.ring.def}) below. In subsequent sections we describe these spaces, and the complex $A^\bullet$, in greater detail in terms of classical elliptic functions.


Let $\C$ be the complex plane and $z$ its standard global coordinate. From now on $X$ will denote the elliptic curve $\C / \Z +\Z\tau$ with marked point $0$. The vector field $\partial / \partial z$ on $\C$ induces a global vector field on $X$ which we denote $\xi$. Under the embedding (\ref{ell.curve.embedding}) into projective space $\xi$ corresponds to the algebraic vector field $v \partial / \partial u$.

We have an isomorphism of algebras $\G(X, \CD_X) \cong \C[\la]$ where $\la$ represents $-\xi$ and in general, for a finite set $I$, we have an isomorphism of algebras $\G(X^I, \CD_{X^I}) \cong \C[\la_i]_{i \in I}$ where $\la_i$ represents the pullback $-\pi_i^*(\xi)$ along the $i^{\text{th}}$ projection $\pi_i : X^I \rightarrow X$. Let $M$ be a right $\CD_{X^I}$-module, then the space of global sections $\G(X^I, M)$ becomes a $\C[\la_i]_{i \in I}$-module. Now let $M$ be a right $\CD_X$-module, so that $\G(X, M)$ is a $\C[\la]$-module and $\G(X^2, \D_*M)$ is a $\C[\la_1, \la_2]$-module. There is a natural $\C[\la]$-action on $\C[\la_1, \la_2]$ given by putting $\la = \la_1+\la_2$. One then has
\begin{align}\label{eq:diag.pushforward.lambda}
\G(X^2, \D_*M) \cong \G(X, M) \otimes_{\C[\la]} \C[\la_1, \la_2]
\end{align}
as in equation (\ref{eq:diagonal.pushforward}). Henceforth, for $I$ a finite set and $j : U^{(I)} \rightarrow X^I$ the embedding (\ref{eq:standard.open}), we write
\begin{align*}
\G_I = \G(X^I, j_*j^*\OO_{X^I}).
\end{align*}
Sometimes we abuse notation and write $\Gamma_n$ to stand for $\Gamma_I$ where $I$ is any set of $n$ elements.

Now let $V$ be a quasiconformal vertex algebra and $\CA = \CA_V$ the chiral algebra on $X$ associated with $V$. As in Section \ref{sec:conformal.blocks} we have a trivialisation (\ref{eq:trivialisation.V}) of $\CA$ as an $\OO_X$-module and identifications 
\begin{align*}
\G(X^I, j_*j^*\CA^{\boxtimes I}) \cong \G_I \otimes V^{\otimes I}.
\end{align*}
The action of $\G(X^I, \CD_{X^I})$ on $\G(X^I, j_*j^*\CA^{\boxtimes I})$ is identified with the action of $\C[\la_i]_{i \in I}$ on $\G_I \otimes V^{\otimes I}$ in which $\la_i$ acts by
\begin{align*}
\partial_{z_i} \otimes 1 + 1 \otimes T_i.
\end{align*}
Here $T_i \in \en(V^{\otimes I})$ is shorthand for the tensor product of $T$ on the $i^{\text{th}}$ component and the identity on all other components. We have a commutative diagram
\begin{align*}
\xymatrix{
\G(X^2, j_*j^* \CA^{\boxtimes 2}) \ar@{->}[r]^\mu \ar@{->}[d] & \G(X^2, \D_* \CA) \ar@{->}[d] \\
\G_2 \otimes V^{\otimes 2} \ar@{->}[r] & (\G_1 \otimes V) \otimes_{\C[\la]} \C[\la_1, \la_2], \\
}
\end{align*}
where, by (\ref{FBZ.lambda.op}), the lower horizontal arrow is the unique morphism of $\C[\lambda_1, \lambda_2]$-modules specified by
\begin{align}\label{2.1.witharrow}
f \otimes (a^1 \otimes a^2) \mapsto \left( \res_{z_1=z_2} f \cdot e^{\vec{\la}_1 (z_1-z_2)} a^{1}(z_{1}-z_{2})a^{2} \right) \otimes_{\C[\la]} 1.
\end{align}
Here the notation $\vec{\la}_1$ means that all copies of $\la_1$ obtained upon expansion of the exponential are to be moved to the right-hand factor of the tensor product.
\end{nolabel}


\begin{nolabel}
As remarked in Section \ref{sec:chevalley-cousin} the chiral operation may be thought of as the $X^2$ component of the Chevalley-Cousin complex $C(\CA)$. We now extend the description (\ref{2.1.witharrow}) above of the chiral operation to a similar description of the $X^I$ component of the Chevalley-Cousin complex for arbitrary finite set $I$.
\end{nolabel}

\begin{nolabel}\label{subsec:equiv.rel}
Let $I$ be a finite set. We denote by $Q(I)$ the set of all equivalence relations on $I$, as in Section \ref{sec:ch.hom}, and by $Q(I, k)$ the set of equivalence relations on $I$ consisting of exactly $k$ equivalence classes. If $\pi : I \twoheadrightarrow R$ is a surjection with $|R|=k$ then we denote by $[R] \in Q(I,k)$ the corresponding equivalence relation and by $I_r$ the preimage $\pi^{-1}(r)$ of $r \in R$. If $\pi' : I \twoheadrightarrow R$ is another surjection then we have $[R] = [R']$ if there exists a bijection $R \cong R'$ such that the following diagram commutes
\begin{align*} 
\xymatrix{
& I \ar[dl] \ar[dr] \\ 
R \ar[rr]^{\cong} & & R'. \\
}
\end{align*}
We denote by $\C[I]$ the $\C$-algebra of polynomials on generators $\la_i$ indexed by $i \in I$. A surjection $\pi : I \twoheadrightarrow R$ induces a morphism $\varphi_\pi : \C[R] \rightarrow \C[I]$ defined by $\varphi_\pi(\la_r) = \sum_{i \in I_r}\la_i$, and thus a $\C[R]$-module structure on $\C[I]$.

Let $\pi : I \rightarrow R$ be a surjection, so we have the diagonal embedding $\D^{(I/R)} : X^R \hookrightarrow X^I$ defined in (\ref{eq:D.j.i}). The image of $\D^{(I/R)}$ clearly depends on $\pi$ only through $[R]$. If $M$ is a right $\CD_{X^R}$-module then, generalising (\ref{eq:diag.pushforward.lambda}), we have
\begin{align*}
\G(X^I, \D^{(I/R)}M) \cong \G(X^R, M) \otimes_{\C[R]} \C[I],
\end{align*}
with the $\C[R]$-action on $\C[I]$ as described in the preceding paragraph.


Now for $[R] \in Q(I)$ we define the $\C[I]$-module
\begin{align}\label{eq:cousin-part-1}
C_{I,[R]} = \Bigl( \Gamma_R \otimes V[1]^{\otimes R} \Bigr) \otimes_{\C[R]} \C[I]
=
\frac{\Gamma_R \otimes V[1]^{\otimes R} \otimes \C[I]}{\left\langle -(\partial_{z_r} + T_r) + \sum_{i \in I_r} \lambda_{i} \right\rangle_{r \in R}}.
\end{align}
Note that we applied a shift to $V$ so that $C_{I,[R]}$ sits in cohomological degree $-|R|$. The symbol $\left\langle \cdots \right\rangle$ denotes $\C[I]$-submodule generated, and we abuse notation writing $\partial_{z_r} + T_r$ in place of $\partial_{z_r} \otimes 1 \otimes 1 + 1 \otimes T_r \otimes 1$. This construction is independent of the choice of representative $R$ of $[R]$ up to canonical isomorphism. At last we define the graded $\C[I]$-module $C_I$ to be
\begin{equation} \label{eq:cousin1}
C_I = \bigoplus_{k \geq 1} C_I^{-k} \quad \text{where} \quad C_I^{-k} = \bigoplus_{[R] \in Q(I, k)} C_{I,[R]}.
\end{equation}
\label{no:labeladded}
\end{nolabel}

\begin{nolabel}\label{no.coeff.differential}
At the level of global sections the complex $C(\CA)_{X^I}$ given in equation (\ref{XJ.Cousin.complex}) becomes the graded $\C[I]$-module $C_I$. We now describe the differential $d$ on $C_I$. The nonzero components of the differential are of the form $d_{[R], [R']} : C_{I, [R]} \rightarrow C_{I, [R']}$ where $[R] \in Q(I, k+1)$ and $[R'] \in Q(I, k)$ is obtained from $[R]$ by joining two of its equivalence classes into one. So let $R = \ov R \cup \{r_1, r_2\} \in Q(I, k+1)$ and let $R' = \ov R \cup \{r_2\} \in Q(I, k)$ be the quotient obtained from $R$ by mapping $r_i \mapsto r_i$ for $i \neq 1$ and $r_1 \mapsto r_2$. We now characterise
\begin{align*}
d_{[R], [R']} : C_{I, [R]} \rightarrow C_{I, [R']}
\end{align*}
as the unique map of $\C[I]$-modules satisfying
\begin{multline} \label{eq:differential-1-raw}
f \otimes \Bigl( \bigotimes_{r \in R} a^r \Bigr) \otimes 1 \mapsto
\left( \res_{z_{r_1} = z_{r_2}} f \cdot e^{\vec{\lambda}_{r_1} (z_{r_1}-z_{r_2})} a^{r_1}(z_{r_1} - z_{r_2})a^{r_2} \right) \otimes \Bigl( \bigotimes_{\ov{r} \in \ov{R}} a^{\ov{r}} \Bigr) \otimes 1.
\end{multline}
Here the notation $\vec{\lambda}_{r_1}$ means all copies of $\lambda_{r_1}$ are passed to the right-hand tensor factor and are replaced by $\sum_{i \in I_{r_1}} \lambda_i$. The residue $\res_{z_{r_1}=z_{r_2}}$ eliminates the dependence on the variable $z_{r_1}$ and the result is identified with an element of $\G_{R'}$. The fact that $(C_I, d)$ is a complex, i.e., that $d^2 = 0$, follows from the corresponding fact for the Chevalley-Cousin complex $C(\CA)_{X^I}$ and may also be demonstrated directly from the Jacobi identity for $V$.
\end{nolabel}

\begin{nolabel}\label{no:global.derham}
In the previous sections we have described the Chevalley-Cousin complex $C(\CA_V)$ of the chiral algebra $\CA_V$ in vertex algebraic terms. We now describe the chiral complex $C^{\text{ch}}(X, \CA_V)$ in similar terms. However we will stop short of using this description to perform computations, opting instead to work with a modified complex (with coefficients in a module supported at a point) for which higher derived global sections and homotopy colimits are easier to handle.

The de Rham functor of a smooth variety $Y$ was described in Section \ref{no:derham.variety.defn} as tensoring over $\CD_Y$ with the resolution of $\OO_Y$ by free $\CD_Y$-modules. We take $Y = X^I$ and we now describe the resolution at the level of global sections. We have $\G(X^I, \OO_{X^I}) = \C$ and $\G(X^I, \CD_{X^I}) = \C[I]$ and the resolution of $\C$ by free $\C[I]$-modules is as follows. We consider the augmentation map $\C[I] \rightarrow \C$ which sets $\la_i$ to $0$ for all $i \in I$. In this way $\C$ acquires the structure of a $\C[I]$-module, and we consider the graded commutative algebra
\begin{align*}
D_I^\bullet = \Sym(\cone(\id_{\C^{I}})).
\end{align*}
Clearly the degree $0$ component of $D_I^\bullet$ is naturally isomorphic to $\C[I]$, hence $D_I^\bullet$ is a $\C[I]$-module and $D_I^\bullet$ is our resolution of $\C$ by free $\C[I]$-modules. It is concentrated in degrees $-n, \ldots, 0$. If $M$ is a right $\CD_{X^I}$-module then
\begin{align*}
\G(X^I, \DR(M)) \cong \G(X^I, M) \otimes_{\C[I]} D_I^\bullet.
\end{align*}
If $M^\bullet$ is a complex of right $\CD_{X^I}$-modules then $\G(X^I, \DR(M^\bullet))$ is computed by the hypercohomology spectral sequence whose $E_1$-page is
\begin{align}\label{hypercoho.spec.seq}
E_1^{p, q} = H^q(\G(X^I, M^p) \otimes_{\C[I]} D_I^\bullet).
\end{align}
If $M^\bullet$ is a complex of right $\CD$-modules on $\XtotheS$ then the object $\G(\XtotheS, \DR(M^\bullet))$ is the homotopy colimit of the $\mathcal{S}^{\text{op}}$-diagram
\begin{align}\label{eq:GammaDR.M}
I \mapsto \G(X^I, M_{X^I}^\bullet) \otimes_{\C[I]} D_I^\bullet.
\end{align}
However to recover the chiral complex
\[
C^{\text{ch}}(X, \CA_V) = R\Gamma(\XtotheS, \DR(C(\CA_V)))
\]
it is not enough to put $M^\bullet = C(\CA_V)$ in (\ref{hypercoho.spec.seq}) and (\ref{eq:GammaDR.M}), since $\Gamma$ has nontrivial higher derived functors. One solution to this difficulty would be to replace each $C(\CA_V)_{X^I}$ by a $\Gamma$-acyclic replacement. Another approach, which is the one we follow in the sections below, is to work with $\Gamma$-acyclic complexes of chiral homology with \emph{coefficients} which, by a theorem of Beilinson and Drinfeld, are quasi-isomorphic to plain chiral homology.
\end{nolabel}

\begin{nolabel}\label{sec:supp.point}
We now describe chiral homology with coefficients in a chiral module $\CM$, following {\cite[Section 4.2.19]{BD.Chiral}}. We emphasise the case of $\CM$ a module supported at a point, and subsequently we extend the description of chiral homology of the elliptic curve developed in the preceding sections to this case.

We return briefly to the general context of $X$ a smooth algebraic curve and $\CA$ a chiral algebra on $X$. A chiral $\CA$-module is a $\CD_X$-module $\CM$ together with an action $\mu_{\CM} : j_*j^*(\CA \boxtimes \CM) \rightarrow \D_*\CM$ satisfying a natural analogue of (\ref{Cousin.complex}). As usual one may extend the chiral algebra structure from $\CA$ to $\CA \oplus \CM[-1]$ using $\mu_\CM$ and by declaring the product of two elements of $\CM$ to be zero. In fact $\CA \oplus \CM$ becomes a graded chiral algebra once we declare $\deg(\CA)=0$ and $\deg(\CM) = +1$.

By the term \emph{pointed set} we shall mean a set $I$ together with a choice of element $i_0$ referred to as the \emph{marked element}. We similarly define a \emph{pointed equivalence relation} on a set $I$ to be an equivalence relation on $I$ together with a choice of \emph{marked equivalence class}. We denote by $Q^*(I)$ the set of pointed equivalence relations, and by $Q^*(I, k)$ the subset of those composed of exactly $k$ equivalence classes.

The Chevalley-Cousin complex of the chiral algebra $\CA \oplus \CM[-1]$ carries an internal $\Z_{\geq 0}$-grading by number of copies of $\CM$, in addition to the cohomological degree. Following {\cite[4.2.19]{BD.Chiral}} the complex $C(\CA, \CM)$ is defined to be the component of $C(\CA \oplus \CM[-1])$ of internal degree $+1$. The explicit description of $C(\CA, \CM)$ is similar to that of $C(\CA)$ given in equation (\ref{XJ.Cousin.complex}). Indeed $C(\CA, \CM)_{X^J}$ is the following sum over the set $Q^*(J)$ of pointed equivalence relations
\begin{align*}
C(\CA, \CM)_{X^J} = \bigoplus_{I \in Q^*(J)} C(\CA, \CM)_{J, I}, \quad \text{where} \quad C(\CA)_{J, I} = \D^{(J/I)}_* j^{[J/I]}_* {j^{[J/I]}}^* \left( \CA^{\boxtimes I \backslash *} \boxtimes \CM \right),
\end{align*}
with $C(\CA)_{J, I}$ in degree $-|I|+1$. The differential is now a sum of components of two types; those in which two copies of $\CA$ are multiplied via $\mu$, and those in which a copy of $\CA$ acts on a copy of $\CM$ via $\mu_{\CM}$.

As before the chiral homology of $\CA$ with coefficients in $\CM$ is defined to be the de Rham cohomology of $C(\CA, \CM)$. That is
\begin{align*}
H^{\text{ch}}_n(X, \CA, \CM) = H^{-n}(C^{\text{ch}}(X, \CA, \CM))
\end{align*}
where
\begin{align*}
C^{\text{ch}}(X, \CA, \CM) = R\Gamma_{\DR}(\XtotheS, C(\CA, \CM)).
\end{align*}

Let $M$ be a $V$-module graded by non negative integer eigenvalues of $L_0$. This restriction on $M$ is very strong, but ultimately we shall only need the case $M = V$. By the localisation procedure (Section \ref{FBZconstruction}) there is an associated $\CA$-module $\CM$ on $X$ which, like $\CA$, is flat as an $\OO_X$-module. Now let $i_x : * \rightarrow X$ be the embedding of a closed point $x \in X$, and let $j_x$ denote the open embedding of the complement $U_x = X \backslash \{x\}$. We may form the $\CA$-module
\begin{align*}
\CM_x = \coker(\CM \rightarrow {j_x}_*{j_x}^*\CM)
\end{align*}
supported at the point $x$. We shall consider the chiral homology with coefficients in $\CM_x$ in particular for $M = V$. Proposition 4.4.3 of \cite{BD.Chiral} asserts the existence of a canonical quasi-isomorphism
\begin{align}\label{eq:quasi443}
C^{\text{ch}}(X, \CA) \cong C^{\text{ch}}(X, \CA, {\CA}_x).
\end{align}
This is an expression of the physics notion of ``propagation of vacua''.


The support of $\CA^{\boxtimes I \backslash *} \boxtimes \CM$ is $X^{I \backslash *} \times \{x\} \subset X^I$, and the preimage of this support under the embedding $j^{[J/I]}$ of $X^{I \backslash *} \times \{x\} \subset X^I$ is an affine variety, isomorphic to a complement of diagonals in $U_x^{I \backslash *}$. Ultimately it is this which allows one to compute chiral homology while avoiding higher derived image functors, see comments in \cite[4.2.19(ii)]{BD.Chiral}.
\end{nolabel}

\begin{nolabel}\label{subsec:pointed.equiv.rel}
We now return to the setting of $X$ an elliptic curve, taking $x = 0$ as marked point, $M$ a $V$-module as above and $\CM_0 = \coker(\CM \rightarrow {j_0}_*{j_0}^*\CM)$. At the level of global sections we have
\begin{align}\label{eq:Gamma.M}
\G(X, \CM_0) = M \otimes_\C \C[\lambda] = M[\lambda],
\end{align}
where the action of the global vector field $-\xi$ is multiplication by $\lambda$.

To describe the complex we first introduce the space of functions on $X^I$ with poles at diagonals and where any of the coordinates coincides with the marked point. We let $\mathring{X} = X \backslash \{0\}$, we write $\mathring{j} : \mathring{X}^I \backslash \Delta \rightarrow X^I$ for the canonical embedding, and we define
\begin{align}\label{eq:G.ring.def}
\mathring{\G}_I = \G(X^I, {\mathring{j}}_*{\mathring{j}}^*\OO_{X^I}).
\end{align}
In particular we allow that $I$ might be empty, in which case $\mx^I = *$ and $\mathring{\G}_I = \C$. Sometimes we abuse notation and write $\mathring{\G}_n$ to stand for $\mathring{\G}_I$ where $I$ is any set of $n$ elements. In fact the group structure of the elliptic curve $X$ can be used to write a bijection $\mathring{\G}_I \rightarrow \G_{I \cup *}$, namely $f \mapsto \hat{f}$ where $\hat{f}(\{x_i\}) = f(\{x_i - x_*\})$. We will use a similar observation in Section \ref{sec:totaro} below when we come to describe the de Rham cohomology of $\mathring{X}^n \backslash \Delta$ for small values on $n$ in terms of classical elliptic functions, but for now the observation plays no direct role.

Let $R = \ov R \cup \{r_0\}$ be a pointed set with marked element $r_0$. We give
\begin{align*}
\mathring{\G}_{\ov{R}} \otimes V[1]^{\otimes \ov{R}} \otimes M[\lambda]
\end{align*}
the structure of a $\C[R]$-module as follows: $\lambda_{r_0}$ acts as multiplication by $\lambda$, while $\lambda_r$ acts by
\[
\partial_{z_r} \otimes 1 \otimes 1 + 1 \otimes T_r \otimes 1
\]
for $r \in \ov R$. Now let $I$ be a non-empty finite set and let $[R] \in Q^*(I)$. Let $R = \ov{R} \cup \{r_0\}$ where $r_0$ is the marked equivalence class. We define the $\C[I]$-module
\begin{align*}
\mathring{C}_{I, [R]} = \left( \mathring{\G}_{\ov{R}} \otimes V[1]^{\otimes \ov{R}} \otimes M[\lambda] \right) \otimes_{\C[R]} \C[I],
\end{align*}
where the $\C[R]$-module structure on the first tensor factor is as described in the preceding paragraph. Now we define the graded $\C[I]$-module $\mathring{C}_I$ to be
\begin{equation} 
\mathring{C}_I = \bigoplus_{k \geq 0} \mathring{C}_I^{-k}, \quad \text{where} \quad \mathring{C}_I^{-k} = \bigoplus_{[R] \in Q^*(I, k+1)} \mathring{C}_{I,[R]}.
\label{eq:cousin.coeff}
\end{equation}
\end{nolabel}

\begin{nolabel}
As in Section \ref{no.coeff.differential} the nonzero components of the differential in $\mathring{C}_I$ are of the form $d_{[R], [R']} : \mathring{C}_{I, [R]} \rightarrow \mathring{C}_{I, [R']}$ where $[R] \in Q^*(I, k+2)$ and $[R'] \in Q^*(I, k+1)$ is obtained from $[R]$ by joining two of its equivalence classes into one. The components are now of two types.

Suppose firstly that $R = \ov R \cup \{r_0, r_1, r_2\} \in Q^*(I, k+3)$ (so that both $r_1$ and $r_2$ are distinct from the marked equivalence class $r_0$), and let $R' = \ov R \cup \{r_0, r_2\} \in Q^*(I, k+2)$ be the quotient obtained from $R$ by mapping $r_i \mapsto r_i$ for $i \neq 1$ and $r_1 \mapsto r_2$. Then 
\begin{align*}
d_{[R], [R']} : \mathring{C}_{I, [R]} \rightarrow \mathring{C}_{I, [R']}
\end{align*}
is the unique map of $\C[I]$-modules satisfying
\begin{multline} \label{eq:cousin-coeff-part-1}
f \otimes \Bigl( \bigotimes_{r \in R \backslash \{r_0\}} a^r \Bigr) \otimes m \otimes 1 \mapsto
\left( \res_{z_{r_1} = z_{r_2}} f \cdot e^{\vec{\lambda}_{r_1} (z_{r_1}-z_{r_2})} a^{r_1}(z_{r_1} - z_{r_2})a^{r_2} \right) \otimes \Bigl( \bigotimes_{\ov{r} \in \ov{R}} a^{\ov{r}} \Bigr) \otimes m \otimes 1.
\end{multline}
Here and in (\ref{eq:cousin-coeff-part-2}) below, as before, the notation $\vec{\lambda}_{r_1}$ means all copies of $\lambda_{r_1}$ are passed to the right-hand tensor factor and are replaced by $\sum_{i \in I_{r_1}} \lambda_i$. Now, suppose on the other hand that $R = \ov R \cup \{r_0, r_1\} \in Q^*(I, k+2)$, and let $R' = \ov R \cup \{r_0\} \in Q^*(I, k+1)$ be the quotient obtained from $R$ by mapping $r_i \mapsto r_i$ for $i \neq 1$ and $r_1 \mapsto r_0$. Then $d_{[R], [R']}$ is the unique map of $\C[I]$-modules satisfying
\begin{multline} \label{eq:cousin-coeff-part-2}
f \otimes \Bigl( \bigotimes_{r \in R \backslash \{r_0\}} a^r \Bigr) \otimes m \otimes 1 \mapsto
\Bigl( \bigotimes_{\ov{r} \in \ov{R}} a^{\ov{r}} \Bigr) \otimes \left( \res_{z_{r_1} = 0} f \cdot e^{\vec{\lambda}_{r_1} z_{r_1}} a^{r_1}(z_{r_1})m \right) \otimes 1.
\end{multline}


\end{nolabel}

\begin{nolabel}
We now compute the chiral complex $C^{\text{ch}}(X, \CA_V, M)$ in terms of the complexes $\mathring{C}_I$. The procedure is the same as in Section \ref{no:global.derham} above. The varieties $\mathring{X}$ and hence $\mathring{X}^{I} \backslash \D$ are affine. It follows that, for the complex of $\CD_{X^I}$-modules $C(\CA_V, \CM)_{X^I}$ with support $X^{I \backslash *} \times \{x\}$, the de Rham cohomology $R\G(X^I, \DR(C(\CA_V, \CM)_{X^I}))$ is computed by the hypercohomology spectral sequence whose $E_1$-page is
\begin{align}\label{hypercoho.spec.seq.coeff}
E_1^{p, q} = H^q(\G(X^I, C(\CA_V, \CM)_{X^I}^p) \otimes_{\C[I]} D_I^\bullet) = H^q(\mathring{C}_I^p \otimes_{\C[I]} D_I^\bullet).
\end{align}
The chiral complex $C^{\text{ch}}(X, \CA_V, M)$ is then given as before by the homotopy colimit of the $\mathcal{S}^{\text{op}}$-diagram $I \mapsto R\G(X^I, \DR(C(\CA_V, \CM)_{X^I}))$.

For any $I$ the nonzero entries of the spectral sequence (\ref{hypercoho.spec.seq.coeff}) are concentrated in bidegrees $(p, q)$ for which $p \leq q \leq 0$. Therefore total degrees $0$ and $-1$ receive contributions only from terms in bidegree $(p, 0)$ where $p=0,-1$, that is, from
\begin{align*}
H^0(\mathring{C}_I^{p} \otimes_{\C[I]} D_I^\bullet) = \mathring{C}_I^{p} \otimes_{\C[I]} \C.
\end{align*}
The latter group is obtained from $\mathring{C}_I^{p}$ setting all $\lambda_i=0$.
\end{nolabel}

\begin{nolabel}\label{nl:n=2.SS.page}
We now work out $H^0(\mathring{C}_I \otimes_{\C[I]} D_I^\bullet)$, and the associated spectral sequence, somewhat explicitly for the case $I = \{1,2\}$. Firstly $\#Q^*(I) = 3$ and so  $\mathring{C}_I$ has three summands: one in cohomological degree $0$ and two in degree $-1$.

The unique pointed equivalence relation contributing to degree $0$ is $I \twoheadrightarrow *$ and we have
\begin{align*}
\mathring{C}_I^0 = M[\lambda] \otimes_{\C[*]} \C[I] = M \otimes_\C \C[\la_1, \lambda_2]
\end{align*}
(the action of $\lambda_*$ on $M[\lambda]$ is multiplication by $\lambda$). Hence
\begin{align*}
H^0(\mathring{C}_I^0 \otimes_{\C[I]} D_I^\bullet) = M.
\end{align*}

The two summands in degree $-1$ both correspond to the identity map $I \rightarrow I$ and are distinguished only by the choice of marked equivalence class. Let us denote by $R$ the quotient with $\{2\}$ as marked equivalence class. The corresponding summand is
\begin{align*}
\mathring{C}_{I, R} = (\mathring{\G}_1 \otimes V \otimes M[\lambda]) \otimes_{\C[I]} \C[I] = \mathring{\G}_1 \otimes V \otimes M[\lambda],
\end{align*}
in which the action of $\lambda_2$ is multiplication by $\lambda$, and the action of $\lambda_1$ is $\partial_{z_1} \otimes 1 \otimes 1 + 1 \otimes T \otimes 1$ which we abuse notation by writing as $\partial_{z_1} + T$. It follows that
\begin{align*}
H^0(\mathring{C}_{I, R} \otimes_{\C[I]} D_I^\bullet) = \frac{\mathring{\G}_1 \otimes V \otimes M}{\left<\partial_{z_1} + T\right>}.
\end{align*}
The differential $d_{[R], [*]}$ on $\mathring{C}_I$ is given by
\begin{align*}
f(z_1) \otimes a \otimes m \otimes 1 \mapsto \res_{z_1=0} f(z_1) e^{\vec{\lambda}_1 z_1} a(z_1)m \otimes 1.
\end{align*}
The functor $H^0(- \otimes_{\C[I]} D_I^\bullet)$ sets all $\la_i$ to $0$, so the differential becomes
\begin{align}\label{eq:h.of.diff}
f(z_1) \otimes a \otimes m \mapsto \res_{z_1=0} f(z_1) a(z_1)m = a_{(f)}m.
\end{align}
The other summand in degree $-1$ is similar, and $H^0(\mathring{C}_{I} \otimes_{\C[I]} D_I^\bullet)$ is
\begin{align}\label{eq:cx.on.X2}
\left( \frac{\mathring{\G}_1 \otimes V \otimes M}{\left<\partial_{z_1} + T_1\right>} \right)^{\oplus 2} \rightarrow M,
\end{align}
where the differential on both summands in degree $-1$ is given by (\ref{eq:h.of.diff}).

It is easy to check directly that $d$ is well-defined on the quotients using the relation $[Ta](x) = \partial_x a(x)$. Indeed by the formal Leibniz rule
\begin{align}\label{eq:well-def.mod.T}
(Ta)_{(f)}m
= \res_x f(x) (\partial_x a(x) m) = - \res_x (\partial_x f(x)) a(x) m = -a_{(\partial f)}m
\end{align}
as required.
\end{nolabel}

\begin{nolabel}
We illustrate the computation of $H_{0}^{\text{ch}}(X, \CA_V)$. By the quasi-isomorphism (\ref{eq:quasi443}) we have $H_{i}^{\text{ch}}(X, \CA) = H_{i}^{\text{ch}}(X, \CA, V)$. We compute the latter group setting $M=V$ in the formulas of Subsection \ref{nl:n=2.SS.page} above. Now the complex $C_\bullet^{\text{ch}}(X, \CA, M)$ is a colimit over an $\mathcal{S}^{\text{op}}$-diagram, but {\cite[Lemma 4.2.10]{BD.Chiral}} asserts that its cohomology in degrees $i=0,-1,\ldots,-n$ is computed correctly after truncation to the (finite) $\mathcal{S}^{\text{op}}_{\leq n+2}$-subdiagram. For $n=0$ the truncated diagram is
\[
\begin{tikzcd}
{H^0(\mathring{C}_{*} \otimes_{\C[*]} D_*^\bullet)} \ar[r] &  {H^0(\mathring{C}_{I} \otimes_{\C[I]} D_I^\bullet)}\ar[loop right]{}{\sigma}, 
\end{tikzcd}
\]
where $I = \{1,2\}$ and $\sigma$ is the automorphism of the complex (\ref{eq:cx.on.X2}) that exchanges the two summands in degree $-1$. It is easy to see that $H^0(\mathring{C}_{*} \otimes_{\C[*]} D_*^\bullet)$ consists of $V$ concentrated in degree $0$, and the morphism from it to $H^0(\mathring{C}_{I} \otimes_{\C[I]} D_I^\bullet)$ is just the identity on $V$. Upon passage to the colimit the summands exchanged by $\sigma$ are identified, yielding the complex
\begin{align}\label{eq:cx.on.X2.aftercolimit}
\frac{\mathring{\G}_1 \otimes V \otimes V}{\left<\partial_{z_1} + T_1\right>} \rightarrow V
\end{align}
with differential (\ref{eq:h.of.diff}). Therefore
\begin{align*}
H_0^{\text{ch}}(X, \CA_V) \cong \frac{V}{\left< a_{(f)}b \mid a, b \in V, f \in \mathring{\G}_1 \right>},
\end{align*}
recovering the coinvariants (\ref{Hdef}) (vector space dual to conformal blocks).
\end{nolabel}

\begin{nolabel}
Now we compute $\mathring{C}_I$ and its de Rham cohomology for $I = \{1,2,3\}$ in order to produce an explicit complex similar to (\ref{eq:cx.on.X2.aftercolimit}). By {\cite[Lemma 4.2.10]{BD.Chiral}} setting $M=V$ in this complex will compute $H_{i}^{\text{ch}}(X, \CA_V)$ for $i=0,1$. In this case $\#Q^*(I)=10$ and so $\mathring{C}_I$ has $10$ summands: one in cohomological degree $0$, six in degree $-1$ and three in degree $-2$.

The unique pointed equivalence relation contributing to degree $0$ is $I \twoheadrightarrow *$ and we have
\begin{align*}
\mathring{C}_I^0 = M[\lambda] \otimes_{\C[*]} \C[I] = M \otimes_\C \C[\la_1, \lambda_2, \lambda_3]
\end{align*}
(the action of $\lambda_*$ on $M[\lambda]$ is multiplication by $\lambda$). Hence
\begin{align*}
H^0(\mathring{C}_I^0 \otimes_{\C[I]} D_I^\bullet) = M.
\end{align*}

Next we consider two pointed equivalence relations in degree $-1$. The first, which we denote $R_2$ is $\pi : I \twoheadrightarrow \{\alpha, \beta\}$ defined by $\pi(1) = \pi(2) = \alpha$ and $\pi(3) = \beta$, with marked equivalence class $\{3\}$. The second, which we denote $R_2'$ is $\pi' : I \twoheadrightarrow \{\alpha, \beta\}$ defined by $\pi'(2) = \alpha$ and $\pi'(1) = \pi'(3) = \beta$, with marked equivalence class $\{1, 3\}$. All six pointed equivalence relations in degree $-1$ are obtained from $R_2$ and $R_2'$ via permutations of $I$. We now have
\begin{align*}
\mathring{C}_{I, R_2} = (\mathring{\G}_1 \otimes V \otimes M[\lambda]) \otimes_{\C[\lambda_\alpha, \lambda_\beta]} \C[I] = (\mathring{\G}_1 \otimes V \otimes M) \otimes_{\C[\lambda_\alpha]} \C[\lambda_1, \lambda_2, \lambda_3]
\end{align*}
where $\lambda_\alpha = \lambda_1 + \lambda_2$ acts on $\mathring{\G}_1 \otimes V \otimes M$ as $\partial_{z} \otimes 1 \otimes 1 + 1 \otimes T \otimes 1$ and $\lambda_\beta = \lambda_3$ acts as multiplication by $\lambda$. Similarly
\begin{align*}
\mathring{C}_{I, R_2'} = (\mathring{\G}_1 \otimes V \otimes M[\lambda]) \otimes_{\C[\lambda_\alpha, \lambda_\beta]} \C[\la_1, \lambda_2, \lambda_3]
\end{align*}
where $\lambda_\alpha = \lambda_2$ acts on $\mathring{\G}_1 \otimes V \otimes M$ as $\partial_{z} \otimes 1 \otimes 1 + 1 \otimes T \otimes 1$ and $\lambda_\beta = \lambda_1 + \lambda_3$ acts as multiplication by $\lambda$. Setting $\la_i=0$ for $i=1,2,3$ now yields
\begin{align}\label{eq:I3.deg2.summands}
H^0(\mathring{C}_{I, R_2} \otimes_{\C[I]} D_I^\bullet) = H^0(\mathring{C}_{I, R_2'} \otimes_{\C[I]} D_I^\bullet) = \frac{\mathring{\G}_1 \otimes V \otimes M}{\left<\partial_{z} + T_1 \right>}
\end{align}
(here we are writing $T_1$ for $1 \otimes T \otimes 1$).

The three summands in degree $-2$ all correspond to the identity map $I \rightarrow I$ and are distinguished only by the choice of marked equivalence class. Let us denote by $R_3$ the quotient with $\{3\}$ as marked equivalence class. The corresponding summand is
\begin{align*}
\mathring{C}_{I, R_3} = (\mathring{\G}_2 \otimes V \otimes V \otimes M[\lambda]) \otimes_{\C[I]} \C[I] = \mathring{\G}_2 \otimes V \otimes V \otimes M[\lambda],
\end{align*}
in which the action of $\lambda_3$ is multiplication by $\lambda$, and for $i=1,2$ the action of $\lambda_i$ is $\partial_{z_i} + T_i$. Setting $\la_i=0$ for $i=1,2,3$ now yields
\begin{align*}
H^0(\mathring{C}_{I, R_3} \otimes_{\C[I]} D_I^\bullet) = \frac{\mathring{\G}_2 \otimes V \otimes V \otimes M}{\left<\partial_{z_i} + T_i\right>_{i=1,2}}.
\end{align*}

The differential $d_{[R_3], [R_2]}$ is given by
\begin{align*}
f(z_1, z_2) \otimes a \otimes b \otimes m \otimes 1 \mapsto \left( \res_{z_1=z_2} f(z_1, z_2) e^{\vec{\la}_1(z_1-z_2)} a(z_1-z_2)b \right) \otimes m \otimes 1.
\end{align*}
The differential $d_{[R_3], [R_2']}$ is given by
\begin{align*}
f(z_1, z_2) \otimes a \otimes b \otimes m \otimes 1 \mapsto b \otimes \left( \res_{z_1=0} f(z_1, z_2) e^{\vec{\la}_1z_1} a(z_1)m \right) \otimes 1.
\end{align*}
After passing to $H^0(- \otimes_{\C[I]} D_I^\bullet)$ these differentials become
\begin{align}\label{eq:diff.I3.deg2-deg1}
\begin{split}
f(z_1, z_2) \otimes a \otimes b \otimes m &\mapsto \res_{z_1=z_2} f(z_1, z_2) a(z_1-z_2)b \otimes m \\
\text{and} \quad f(z_1, z_2) \otimes a \otimes b \otimes m &\mapsto \res_{z_1=0} f(z_1, z_2) b \otimes a(z_1) m
\end{split}
\end{align}
respectively (cf. (\ref{eq:h.of.diff})).

The chiral homology groups $H_i^{\text{ch}}(X, \CA_V)$, $i=0,1$ are computed from the  colimit of the $\mathcal{S}^{\text{op}}_{\leq 3}$-diagram
\[
\begin{tikzcd}
{H^0(\mathring{C}_{*} \otimes_{\C[*]} D_*^\bullet)} \ar[r] & {H^0(\mathring{C}_{\{1,2\}} \otimes_{\C[\{1,2\}]} D_{\{1,2\}}^\bullet)}\ar[loop above]{}{\sigma} \ar[r] & {H^0(\mathring{C}_{I} \otimes_{\C[I]} D_I^\bullet)}\ar[loop right]{}{\Sigma_3} 
\end{tikzcd}
\]
(all morphisms are those induced by permutations of factors in powers of $X$).

We now describe the colimit $A^\bullet$ of the above diagram concretely, summarising the result in Proposition \ref{prop:chhom.equals.A} below. In degree $0$ the diagram consists of copies of $M$ and identity morphisms, so in the colimit $A^0 = M$. In degree $-1$ the diagram consists of copies of $\mathring{\Gamma}_1 \otimes V \otimes V / \left<\partial_{z_1} + T_1\right>$ which are all identified under the symmetric group actions. Indeed in degrees $0$ and $-1$ the colimit is precisely the complex (\ref{eq:cx.on.X2.aftercolimit}). Finally in degree $-2$ we obtain a quotient of
\[
\frac{\mathring{\G}_2 \otimes V \otimes V \otimes V}{\left<\partial_{x} + T_1, \partial_y + T_2 \right>},
\]
by an action of $\Sigma_2$ which exchanges the sections
\begin{align*}
f(x,y) \cdot a \otimes b \otimes m \quad \text{and} \quad -f(y, x) \cdot b \otimes a \otimes m.
\end{align*}
Having performed these quotients, the components (\ref{eq:diff.I3.deg2-deg1}) become the differential $d_2 : A^{-2} \rightarrow A^{-1}$ defined in (\ref{2.1.coeff}) below. One may confirm that $d_2$ descends to the quotient due to the skew-symmetry identity (\ref{skewsymmetry.identity}). Indeed
\begin{align*}
\res_{y=x} f(y, x) a(y-x)b
&= \res_{t} f(x+t, x) a(t)b \\
&= \res_{t} f(x+t, x) e^{tT} b(-t)a \\
&= -\res_{t} f(x-t, x) e^{-tT} b(t)a \\
&\equiv -\res_{t} f(x-t, x) e^{t\partial_x} b(t)a \\
&= -\res_{t} f(x, x+t) b(t)a \\
&= - \res_{y=x} f(x, y) b(y-x)a,
\end{align*}
so in $A^{-1}$ we have
\begin{align}\label{eq:skew-kills}
d\left( f(x,y) \cdot a \otimes b \otimes m + f(y,x) \cdot b \otimes a \otimes m \right) = 0.
\end{align}
The well-definedness of $d_1$ and $d_2$, although it follows by construction, may be verified by calculations similarly to (\ref{eq:well-def.mod.T}). The fact that $d_1 \circ d_2 = 0$ may also be verified directly from the Jacobi identity (\ref{VAJacobi.identity}). 
In summary we have proved the following proposition.
\end{nolabel}

\begin{prop}\label{prop:chhom.equals.A}
Let $\mathring{\G}_1$ and $\mathring{\G}_2$ be the spaces of meromorphic functions on powers of the elliptic curve $X = X_q$ as defined in (\ref{eq:G.ring.def}) above. Let $V$ be a quasiconformal vertex algebra and let $A^\bullet$ be the complex given by
\begin{align}\label{eq:A.comp.def}
A^0 &= V, \qquad
A^{-1} = \frac{\mathring{\G}_1 \otimes V \otimes V}{\left<\partial_{x} + T_1\right>} \quad \text{and} \quad
A^{-2} = \frac{\mathring{\G}_2 \otimes V \otimes V \otimes V}{\left<\partial_{x} + T_1, \partial_y + T_2 \right>},
\end{align}
with the differentials $d_1 : A^{-1} \rightarrow A^0$ and $d_2 : A^{-2} \rightarrow A^{-1}$ given by
\begin{align}
d_1\left( f(x) a \otimes m \right) = {} & \res_x f(x)a(x)m = a_{(f)} m, \label{1.0.coeff} \\
\begin{split}
d_2 \left( f(x,y) a \otimes b \otimes m \right) = {} & 
 \res_{y} f(x,y) a \otimes b(y)m \\
& - \res_{y=x} f(y,x) a(y-x) b \otimes m - \res_y f(y,x) b \otimes a(y) m. 
\end{split}\label{2.1.coeff}
\end{align}
Then for $i=0, 1$ there exist isomorphisms
\begin{align*}
H_i^{\text{ch}}(X, \CA_V) \cong H^{-i}(A^\bullet).
\end{align*}
\end{prop}

%
%

\section{Elliptic Functions}\label{sec:elliptic.functions}

In this section we recall some background material on elliptic functions as well as several identities which will be used in later sections to study the complex $A^\bullet$ in greater detail.

\begin{nolabel}\label{no:apa.1}
We recall the Eisenstein series $G_{k} \in \C[[q]]$ defined for $k \geq 1$ to be
\begin{align*}
G_{k} = (2\pi i)^{k} \left( -\frac{B_{k}}{k!} + \frac{2}{(k-1)!} \sum_{n=1}^\infty n^{k-1} \frac{q^n}{1-q^n} \right)
\end{align*}
for $k$ even, and $0$ for $k$ odd. Here $B_n$ are the Bernoulli numbers, defined by $t / (e^t-1) = \sum_{n=1}^\infty B_n t^n / n!$. For $k \geq 4$ the Eisenstein series $G_{k}$ is a modular form of weight $k$, while $G_2$ is a quasimodular form of weight $2$.

We also recall the Weierstrass elliptic function $\wp$. For us it will be convenient to put
\begin{align}\label{eq:def.Weierstrass.p}
\wp(z, q) = z^{-2} + \sum_{k=0}^\infty (2k+1) G_{2k+2} z^{2k},
\end{align}
which differs from the standard normalisation by the additive constant $G_2$. The Weierstrass quasielliptic function $\zeta$ is similarly defined to be
\begin{equation}
\begin{aligned}
\zeta(z, q) = z^{-1} - \sum_{k=0}^\infty G_{2k+2} z^{2k+1}.
\end{aligned}
\label{eq:aliased}
\end{equation}
We clearly have the relation $\wp(z) = -\partial_z \zeta(z)$. Note that $\wp$ is an even function of $z$ and $\zeta$ an odd function of $z$.

Let $\tau \in \HH$ the upper half complex plane and put $q = e^{2\pi i \tau}$. Then the two series above may be viewed as Laurent series expansions of meromorphic functions $\wp(z)$ and $\zeta(z)$ with poles for $z$ in the lattice $\Lambda_\tau = \Z 1 + \Z \tau$. We have
\begin{align*}
\wp(z+1, q) = \wp(z+\tau, q) = \wp(z, q),
\end{align*}
in other words $\wp$ is an elliptic function, and
\begin{align}\label{eq:quasielliptic}
\zeta(z+1, q) = \zeta(z, q), \quad \text{while} \quad \zeta(z+\tau, q) = \zeta(z, q) - 2\pi i.
\end{align}

The following identities are valid:
\begin{align*}
\zeta(z, q) &= 2\pi i \left(\frac{e^{2\pi i z}}{e^{2\pi i z}-1} - \frac{1}{2} - \sum_{n=1}^\infty \frac{q^n}{1-q^n} \left[ e^{2\pi i n z} - e^{-2\pi i n z}\right] \right) \\
\text{and} \quad
\wp(z, q) &= (2\pi i)^2 \left(\frac{e^{2\pi i z}}{(e^{2\pi i z}-1)^2} + \sum_{n=1}^\infty \frac{n q^n}{1-q^n} \left[ e^{2\pi i n z} + e^{-2\pi i n z}\right] \right),
\end{align*}
and from them it is clear that
\begin{align}\label{eq:zeta.to.f.reduction}
\begin{split}
\zeta(z, q=0) &= 2\pi i \frac{e^{2\pi i z}}{e^{2\pi i z}-1} - \pi i = f(z) - \pi i \\
\quad \text{and} \quad
\wp(z, q=0) &= (2\pi i)^2 \frac{e^{2\pi i z}}{(e^{2\pi i z}-1)^2} = g(z).
\end{split}
\end{align}
Here $f(z)$ and $g(z)$ are as defined in equation (\ref{f.and.g.def}).
\end{nolabel}

\begin{lem}\label{lem:deq}
The following identity holds
\begin{equation}
-(2 \pi i)^2 q \frac{d \zeta}{dq} = \zeta \wp + \frac{1}{2} \wp'. 
\label{eq:deq}
\end{equation}
\end{lem}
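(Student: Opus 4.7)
The plan is to show that both sides of \eqref{eq:deq}, viewed as meromorphic functions of $z$ valued in $\C[[q]]$, coincide by verifying that their difference $F(z,q)$ descends to a holomorphic elliptic function on $X_q$ and is odd in $z$, hence identically zero by Liouville's theorem.

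First I would establish $\Lambda$-periodicity of $F$ in $z$. Invariance under $z \mapsto z+1$ is immediate from the expansions in Section \ref{no:apa.1}. For $z \mapsto z+\tau$ the key input is the quasi-periodicity \eqref{eq:quasielliptic}, i.e.\ $\zeta(z+\tau, \tau) = \zeta(z,\tau) - 2\pi i$. Differentiating this identity with respect to $\tau$ at fixed $z$ and using $\partial_z \zeta = -\wp$ together with ellipticity of $\wp$ yields
\[
\partial_\tau \zeta(z+\tau, \tau) = \partial_\tau \zeta(z, \tau) + \wp(z, \tau).
\]
Translating via $q\,d/dq = (2\pi i)^{-1} \partial_\tau$, the left hand side of \eqref{eq:deq} acquires an additive defect $-2\pi i\, \wp$ under $z \mapsto z+\tau$; the right hand side acquires the same defect directly from $(\zeta - 2\pi i)\wp + \tfrac{1}{2}\wp' = \zeta \wp + \tfrac{1}{2}\wp' - 2\pi i\, \wp$. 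Hence $F$ is $\Lambda$-periodic.

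Next I would check holomorphicity of $F$ on a fundamental domain, where the only candidate pole is at $z = 0$. Using $\zeta = z^{-1} + O(z)$ and $\wp = z^{-2} + O(1)$, the product $\zeta \wp$ has Laurent expansion of the form $z^{-3} + O(z^{-1})$, and the coefficient of $z^{-1}$ in fact vanishes because $\zeta \wp$ is odd in $z$. Meanwhile $\tfrac{1}{2}\wp' = -z^{-3} + O(z)$, so the right hand side of \eqref{eq:deq} is holomorphic at $z = 0$. On the left, the Laurent principal part of $\zeta$ is the $q$-independent term $z^{-1}$, so $q\, d\zeta/dq$ is already holomorphic at $z = 0$. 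Hence $F$ extends to a holomorphic function on $X_q$, and is therefore constant in $z$ for each $q$ by Liouville. Since $\zeta$ is odd and $\wp$ even, the functions $\zeta \wp$, $\wp'$ and $q\, d\zeta/dq$ are all odd in $z$, so $F$ is odd; the unique odd constant is $0$, and therefore $F \equiv 0$.

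The main subtlety lies in the quasi-periodicity step: one must differentiate a function of $(z,\tau)$ that is itself quasi-periodic with a $\tau$-dependent period, and correctly identify the $\wp$-contribution produced by the chain rule as the exact compensator for the $-2\pi i\, \wp$ defect on the right hand side. Once this matching is in place, the remainder of the argument is formal Liouville theory and a parity observation.
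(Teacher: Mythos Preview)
Your argument is correct and follows essentially the same Liouville-theoretic route that the paper's (very terse) proof sketches: the paper hints at differentiating the $\tau$-periodicity of $\wp$ and invoking the quasi-periodicity \eqref{eq:quasielliptic} of $\zeta$, which is exactly the mechanism you use to show both sides acquire the same defect $-2\pi i\,\wp$ under $z\mapsto z+\tau$. The remaining steps (matching principal parts at $z=0$, Liouville, and the parity observation to kill the constant) are the natural completion of that sketch, and you have carried them out cleanly.
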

\begin{proof}
It is proved in \cite[Lemma 4.6]{Zhu.Note} that the difference between the two sides of (\ref{eq:deq}) is an elliptic function. The claimed equality follows upon computing polar parts at $z=0$ of both sides and confirming that they agree.
\end{proof}

\begin{nolabel}\label{sec:ZZZ}
In this section we introduce some elliptic functions of three variables and some useful identities among them. Firstly it is clear that the function
\begin{align*}
\zuta(x, y, z) = \zeta(x-y) + \zeta(y-z) + \zeta(z-x)
\end{align*}
is elliptic in the three variables $x, y$ and $z$. Next we define
\begin{align*}
\ZZZ(x, y, z) = \wp(x-y) \left[ \zeta(x-y) + \zeta(y-z) + \zeta(z-x) \right] + \frac{1}{2}\wp'(x-y).
\end{align*}
\end{nolabel}

\begin{lem}\label{surprising.symmetry}
The function $\ZZZ(x, y, z)$ is cyclically symmetric in the variables $x$, $y$ and $z$.
\end{lem}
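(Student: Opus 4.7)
The plan is to reduce the cyclic symmetry $\ZZZ(x,y,z) = \ZZZ(y,z,x) = \ZZZ(z,x,y)$ to a single equation between elliptic functions of two variables, and then to apply the classical Weierstrass addition formula for $\zeta$. Since $\zuta(x,y,z)$ is manifestly invariant under cyclic permutation of $(x,y,z)$, the equation $\ZZZ(x,y,z) = \ZZZ(y,z,x)$ is equivalent to
\begin{equation}
\left[ \wp(x-y) - \wp(y-z) \right] \zuta(x,y,z) + \tfrac{1}{2}\left[ \wp'(x-y) - \wp'(y-z) \right] = 0,
\label{eq:plan-main}
\end{equation}
and applying this identity once more yields $\ZZZ(y,z,x) = \ZZZ(z,x,y)$, establishing full cyclic symmetry.

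To prove \eqref{eq:plan-main}, I would invoke the classical addition formula
\[
\zeta(u+v) = \zeta(u) + \zeta(v) + \tfrac{1}{2}\frac{\wp'(u) - \wp'(v)}{\wp(u) - \wp(v)}.
\]
Setting $u = x-y$ and $v = y-z$, so that $u+v = x-z$, and using that $\zeta$ is odd to write $\zeta(z-x) = -\zeta(u+v)$, one obtains
\[
\zuta(x,y,z) = \zeta(u) + \zeta(v) - \zeta(u+v) = -\tfrac{1}{2}\frac{\wp'(u) - \wp'(v)}{\wp(u) - \wp(v)},
\]
and clearing the denominator gives \eqref{eq:plan-main} immediately.

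I do not expect any serious obstacle. The only care needed is to confirm that the classical addition formula holds in the normalisation of Section \ref{sec:elliptic.functions}, in which $\wp$ differs from the standard Weierstrass function by the additive constant $G_2$ and $\zeta$ correspondingly by a linear term $-G_2 z$. Both differences $\wp(u) - \wp(v)$ and $\zeta(u)+\zeta(v)-\zeta(u+v)$ are insensitive to these shifts (the constant cancels in the first, the linear term cancels in the second), and $\wp'$ is entirely unchanged, so the formula transports cleanly. As a self-contained alternative avoiding any citation, one can regard the left-hand side of \eqref{eq:plan-main} as a meromorphic function of $x$ with $y,z$ fixed, check by a local expansion at $x=y$ and $x=z$ that the apparent polar parts cancel (the $\wp(x-y)\zeta(x-y)+\tfrac12\wp'(x-y)$ combination being regular at $x=y$ and the remaining simple poles pairing up across the two terms), conclude that the expression is a holomorphic elliptic function of $x$ and hence constant, and force the constant to vanish by a single specialisation such as $x=y+\tfrac{1}{2}$ and $z=y+\tfrac{\tau}{2}$.
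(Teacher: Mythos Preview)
Your proof is correct and is essentially the same as the paper's: both arguments hinge on the addition formula
\[
\frac{\wp'(u)-\wp'(v)}{\wp(u)-\wp(v)} = 2\zeta(u+v)-2\zeta(u)-2\zeta(v),
\]
which you cite as classical and which the paper re-derives from the $\sigma$-function identity $\wp(u)-\wp(v) = -\sigma(u+v)\sigma(u-v)/\sigma(u)^2\sigma(v)^2$. Your explicit reduction of the cyclic symmetry to the single equation \eqref{eq:plan-main} and your remark on the normalisation are a welcome clarification; the alternative Liouville argument you sketch is also sound.
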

\begin{proof}
The Weierstrass function $\sigma$ is defined by
\begin{align*}
\sigma(z) = z \prod_{\omega \in \Lambda_\tau \backslash 0} \left(1 - \frac{z}{\omega}\right) e^{z/\omega + \frac{1}{2}(z/\omega)^2},
\end{align*}
and satisfies
\begin{align*}
\frac{\sigma'(z)}{\sigma(z)} = \zeta(z) + G_2 z.
\end{align*}
Now we recall the identity {\cite[p. 243]{Lang.Book.elliptic}}
\begin{align*}
\wp(u) - \wp(v) = - \frac{\sigma(u+v)\sigma(u-v)}{\sigma^2(u)\sigma^2(v)}.
\end{align*}
Taking the logarithmic derivative (in $u$) of this identity gives
\begin{align*}
  \frac{\wp'(u)}{\wp(u) - \wp(v)}
  &=  \frac{d}{du} \left[ \log{\sigma(u+v)} + \log{\sigma(u-v)} - 2 \log{\sigma(u)} \right] \\
&= \zeta(u+v) + \zeta(u-v) - 2 \zeta(u).
\end{align*}
Combining this with a similar logarithmic derivative in $v$ yields
\begin{align*}
\frac{\wp'(u) - \wp'(v)}{\wp(u) - \wp(v)} = 2\zeta(u+v) - 2 \zeta(u) - 2\zeta(v),
\end{align*}
which establishes the desired cyclic symmetry.
\end{proof}

\begin{nolabel}
The cyclic symmetry of $\ZZZ(x, y, z)$ is made manifest by the identity asserted in Lemma \ref{manifest.symmetry} below. To state the identity we need to recall the Jacobi theta functions {\cite[Chapter V]{Chandra}}
\begin{align*}
\theta(z, \tau) &= -i \cdot \sum_{n \in \Z} (-1)^n q^{(n+1/2)^2/2} e^{2\pi i (n+1/2) z}, \\
\theta_1(z, \tau) &= \sum_{n \in \Z} q^{(n+1/2)^2/2} e^{2\pi i(n+1/2) z}, \\
\theta_2(z, \tau) &= \sum_{n \in \Z} (-1)^n q^{n^2/2} e^{2\pi i n z}, \\
\theta_3(z, \tau) &= \sum_{n \in \Z} q^{n^2/2} e^{2\pi i n z}.
\end{align*}
and the Jacobi elliptic function {\cite[p. 100]{Chandra}}
\begin{align*}
\sn(z, \tau) = \frac{1}{\pi \cdot \theta_1(0, \tau) \cdot \theta_3(0, \tau)} \cdot \frac{\theta(z, \tau)}{\theta_2(z, \tau)}.
\end{align*}
One has the following identity relating the Weierstrass and Jacobi elliptic functions {\cite[p. 102]{Chandra}}
\begin{align}\label{Weierstrass.Jacobi}
\sn(z, \tau)^2 = \frac{1}{\wp(z, \tau) - e(\tau)},
\end{align}
where the half period value $e(\tau) = \wp(\tau/2, \tau)$ can in fact be expressed explicitly as
\begin{align*}
e(\tau) = -8\pi^2 \sum_{n=1}^\infty \frac{n \, q^{n/2}}{1-q^{n}}.
\end{align*}

\end{nolabel}
\begin{lem}\label{manifest.symmetry}
The functions
\begin{align}\label{eq:man.symm}
\ZZZ(x, y, z) - e(\tau)\zuta(x, y, z) \quad \text{and} \quad -\frac{1}{\sn(x-y)\sn(y-z)\sn(z-x)}
\end{align}
differ by a constant.
\end{lem}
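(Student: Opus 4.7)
The plan is to show that the difference
\[
F(x,y,z) - G(x,y,z), \qquad F := \ZZZ - e(\tau)\zuta, \qquad G := -\frac{1}{\sn(x-y)\sn(y-z)\sn(z-x)},
\]
is an elliptic function in each variable separately with no poles, hence a constant by Liouville. The strategy breaks into three steps: verify ellipticity of both $F$ and $G$ in each variable, show their principal parts agree on the diagonals $x=y$, $y=z$, $z=x$, then invoke Liouville.

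For $F$: the identities \eqref{eq:quasielliptic} imply that $\zeta(x-y) + \zeta(z-x)$ is $\Lambda_\tau$-periodic in $x$, so $\zuta$ is elliptic in $x$, and by its evident permutation symmetry it is elliptic in all variables. Since $\wp, \wp'$ are elliptic, the formula $\ZZZ = \wp(x-y)\,\zuta + \tfrac{1}{2}\wp'(x-y)$ and Lemma \ref{surprising.symmetry} make $\ZZZ$ (hence $F$) elliptic in each variable. For $G$: the identity \eqref{Weierstrass.Jacobi} makes $\sn^2$ elliptic, so $\sn(u+\omega) = \epsilon(\omega)\sn(u)$ defines a homomorphism $\epsilon : \Lambda_\tau \to \{\pm 1\}$. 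Under $x \mapsto x + \omega$ both $\sn(x-y)$ and $\sn(z-x)$ pick up the factor $\epsilon(\omega)$ (using oddness of $\sn$), so their product is invariant and $G$ is elliptic in $x$; cyclic symmetry $G(x,y,z) = G(y,z,x)$ extends this to all variables.

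The key step is the residue computation at $x=y$. A direct expansion using \S\ref{no:apa.1} shows that $\wp(u)\zeta(u) + \tfrac{1}{2}\wp'(u)$ is holomorphic at $u=0$: the coefficients of $u^{-3}$ cancel, and the coefficient of $u^{-1}$ is $-G_2 + G_2 = 0$. Hence near $x=y$ the only pole of $\ZZZ$ comes from $\wp(x-y)\bigl[\zeta(y-z) + \zeta(z-x)\bigr]$. Since $\zeta$ is odd, $\zeta(y-z) + \zeta(z-x)$ vanishes at $x=y$, and Taylor expansion via $\zeta' = -\wp$ gives
\[
\zeta(y-z) + \zeta(z-x) = (x-y)\,\wp(y-z) + O((x-y)^2).
\]
Multiplying by $\wp(x-y) = (x-y)^{-2} + O(1)$ yields $\res_{x=y}\ZZZ = \wp(y-z)$; combined with $\res_{x=y}\zuta = 1$, this gives $\res_{x=y}F = \wp(y-z) - e(\tau)$. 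For $G$, using $\sn(u) = u + O(u^3)$ and oddness, one finds
\[
\res_{x=y} G = \frac{1}{\sn(y-z)^2} = \wp(y-z) - e(\tau)
\]
by \eqref{Weierstrass.Jacobi}. By cyclic symmetry of $F$ and $G$ the residues at $y=z$ and $z=x$ also agree.

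Therefore $F - G$ is holomorphic and elliptic as a function of $x$ for generic $(y,z)$, so Liouville forces it to be constant in $x$; the identical argument in $y$ and $z$ renders $F - G$ constant in all variables, completing the proof. The main obstacle is the residue computation for $\ZZZ$: one must detect the precise cancellation of the would-be $(x-y)^{-3}$ and $(x-y)^{-1}$ singularities of $\wp(x-y)\zeta(x-y) + \tfrac{1}{2}\wp'(x-y)$, after which the surviving simple pole of $\ZZZ$ emerges entirely from the cross-terms and matches that of $G$ via the defining identity $\sn^2 = 1/(\wp - e)$.
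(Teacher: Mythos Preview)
Your proof is correct and follows essentially the same route as the paper: verify ellipticity, match the simple-pole residues at $x=y$ (and, by symmetry, at the other diagonals) via the identity $\sn^{-2}=\wp-e(\tau)$, then invoke Liouville. The only cosmetic differences are that the paper leaves ellipticity implicit and concludes constancy from cyclic symmetry plus translation invariance rather than iterating Liouville in each variable; your explicit cancellation in $\wp(u)\zeta(u)+\tfrac12\wp'(u)$ is a welcome addition, and the normalisation $\sn(u)=u+O(u^3)$ you invoke is exactly what the paper justifies via Jacobi's formula $\theta'(0)=\pi\,\theta_1(0)\theta_2(0)\theta_3(0)$.
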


\begin{proof}
Equation (\ref{Weierstrass.Jacobi}) implies that the function $\sn(z)^{-1}$ has a simple pole at $z=0$ with residue $1$. This can also be seen from Jacobi's formula $\theta'(0, \tau) = \pi \cdot \theta_1(0, \tau) \cdot \theta_2(0, \tau) \cdot \theta_3(0, \tau)$ (here $\theta'$ denotes the derivative with respect to the first variable).

Consequently the right-hand side of (\ref{eq:man.symm}), regarded as a function of $x$, has a simple pole at $x=y$ with residue $\sn(y-z)^{-2}$ (here we have used that $\sn(z)$ is an odd function of $z$). On the other hand $\ZZZ(x, y, z)$, regarded as a function of $x$, has a simple pole at $x=y$ with residue $\wp(y-z)$.

The identity (\ref{Weierstrass.Jacobi}) shows that the difference, call it $\beta$, between the two sides of (\ref{eq:man.symm}) is regular at $x=y$. Similarly $\beta$ is regular at $x=z$, and hence is an elliptic function of $y-z$. By Lemma \ref{surprising.symmetry} and the manifest symmetry of the right-hand side of (\ref{eq:man.symm}), $\beta$ is cyclically symmetric in $x, y, z$. As a cyclically symmetric function independent of $x$, $\beta$ is constant as claimed.
\end{proof}

\section{Configuration Spaces of Elliptic Curves}\label{sec:totaro}

In Section \ref{sec:ch.hom.ell} we produced the complex $A^\bullet$ which computes the chiral homology groups $H^{\text{ch}}_i(X, \CA_V)$ of $X$ an elliptic curve for $i=0,1$. To aid in explicit calculations with $A^\bullet$ it is useful to study in more detail the spaces of functions $\mathring{\G}_I$, introduced in (\ref{eq:G.ring.def}), which figure in its definition. This is carried out in the present section.

\begin{nolabel}
Let $X$ be a smooth complex projective algebraic curve, and $H^{\bullet}(X^n)$ the cohomology ring of $X^n$. Let $\pi_i : H^{\bullet}(X) \rightarrow H^{\bullet}(X^n)$ denote the pullback morphism associated with the projection $X^n \rightarrow X$ to the $i^{\text{th}}$ component, and similarly $\pi_{i,j} : H^{\bullet}(X^2) \rightarrow H^{\bullet}(X^n)$ for $i \neq j$. The class in $H^2(X^2)$ represented by the diagonal copy of $X$ in $X^2$ is denoted $\D$. The subvariety $U^{(n)} \subset X^n$ is defined as the complement of all diagonal divisors.

A special case of a theorem of Totaro {\cite[Theorem 4]{Totaro}} (see also \cite{brownlevin,knudsen,maguire,sch16}) asserts that $H^{\bullet}(U^{(n)})$ is isomorphic to the cohomology of the graded-commutative algebra
\begin{align*}
\left(H^{\bullet}(X^n) \otimes \C[G_{i, j}]_{1 \leq i, j \leq n, i \neq j}\right) / J,
\end{align*}
with degree $1$ generators $G_{i, j}$, differential $d$ defined by $d(G_{i,j}) = \pi_{i,j}(\D)$, and where $J$ is the ideal generated by the elements
\begin{align*}
&G_{i,j} - G_{j,i}, \\
&G_{i,j}G_{i,k} + G_{j,k}G_{j,i} + G_{k,i}G_{k,j}, \\
&[\pi_i(x) - \pi_j(x)] G_{i,j}, \quad \text{for all $x \in H^\bullet(X)$}.
\end{align*}

We now pass to the case of $X$ an elliptic curve. We recall the notation $\mx = X \backslash 0$, and we write $\mathring{U}^{(n)}$ for the complement of all diagonal divisors in $\mx^n$. The group structure of the elliptic curve $X$ yields an isomorphism of varieties
\begin{align}\label{proj.to.aff}
U^{(n+1)} \rightarrow X \times \mathring{U}^{(n)},
\end{align}
for any $n \geq 0$, given by
\begin{align*}
(x_0, x_1, \ldots, x_n) \mapsto (x_0, x_1-x_0, \ldots, x_n-x_0).
\end{align*}
The Betti numbers $h^k(U^{(n)}) = \dim_\C(H^k(U^{(n)}))$ may be computed for small values of $n$ using Totaro's presentation, and using (\ref{proj.to.aff}) we may infer the Betti numbers of $\mathring{U}^{(n)}$ from those of $U^{(n)}$. We are particularly interested in the top Betti number $h^n(\mathring{U}^{(n)})$ which we compute for small values of $n$, obtaining the values listed in the following table.
\begin{align*}
\begin{array}{c|cccccc}
n & 0 & 1 & 2 & 3 & 4 & 5 \\
\hline 
h^{n}(\mathring{U}^{(n)}) & 1 & 2 & 5 & 18 & 79 & 432 \\
\end{array}
\end{align*}
These computations were performed with the computer algebra system \texttt{SAGE} \cite{sagemath}.

Now let $Y$ be a smooth algebraic variety of dimension $n$. At the level of complexes of vector spaces the equality
\begin{align}\label{derham.betti}
H^\bullet(Y)[n] = H^\bullet(R\G_{\DR}(\om_Y))
\end{align}
holds, and so $\dim_\C(H^k(R\G_{\DR}(\om_Y))) = h^{k+n}(Y)$. Now we study
\begin{align*}
\mathring\G_n = \Gamma(\mx^n, \mathring{j}_*\mathring{j}^*\OO_{\mx^n}),
\end{align*}
where $j$ is the embedding $\mathring{U}^{(n)} \rightarrow \mx^n$, and we put $Y = \mathring{U}^{(n)}$. The sheaf $\om_Y$ is a free $\OO_Y$-module of rank $1$ with generator $\nu = dx_1 \wedge \ldots \wedge dx_n$ where $dx_i = \pi_i^*(dz)$ is the pullback of the global $1$-form $dz$ on $X$ (as in Section \ref{sec:conformal.blocks}). The action (\ref{eq:Lie.derivative}) of $\Theta_Y$ on $\om_Y$ is given by
\begin{align*}
(f \nu) \cdot g \partial_{x_i} = -\partial_{x_i}(fg) \nu,
\end{align*}
and so
\begin{align*}
H^0(R\G_{\DR}(j_*j^*\om_{\mx^n})) \cong H^0(R\G_{\DR}(\om_{Y})) \cong \mathring\G_n / \left<\partial_{x_i}\mathring\G_n\right>_{i = 1,\ldots, n}.
\end{align*}
By (\ref{derham.betti}) if follows that
\begin{align*}
\CF_{n} := \mathring\G_n / \left<\partial_{x_i}\mathring\G_n\right>_{i = 1,\ldots, n}
\end{align*}
has dimension $h^n(\mathring{U}^{(n)})$.

\end{nolabel}
\begin{nolabel}\label{funct.induc.sec}
We construct bases of the vector spaces $\CF_{n}$ inductively. If we view $f(x_1, \ldots, x_{n+1}) \in \mathring{\G}_{n+1}$ as a function of $x_{n+1}$ with possible poles at $x_1, \ldots x_n$ and $0$ then by Liouville's theorem we have
\begin{align}\label{eq:Fn.decomp}
\begin{split}
f(x_1, \ldots x_{n+1}) = {} & c(x_1,\ldots x_n) + \sum_{i=0}^n \alpha_{i}(x_1,\ldots x_n) \zeta(x_{n+1} - x_i) \\
&+ \sum_{k \in \Z_{\geq 0}} \sum_{i=0}^n \beta_{i, k}(x_1,\ldots x_n) \wp^{(k)}(x_i-x_{n+1})
\end{split}
\end{align}
 for some collection of functions $c, \al_i, \beta_{i, k} \in \mathring\G_n$. Here $x_0$ stands for $0$, for notational simplicity, and $\wp^{(k)}(z)$ denotes $\partial_z^{(k)} \wp(z) = \tfrac{1}{k!}\partial_z^k \wp(z)$. Now for $k > 0$ we have that
\begin{align*}
\beta(x_1,\ldots x_n) \wp^{(k)}(x_i-x_{n+1}) = -\tfrac{1}{k} \partial_{x_{n+1}} \left( \beta(x_1,\ldots x_n) \wp^{(k-1)}(x_i-x_{n+1}) \right),
\end{align*}
is a total derivative and so vanishes in the quotient $\CF_{n+1}$. Hence, at the level of $\CF_{n+1}$, all terms with $k>0$ in the second summation of (\ref{eq:Fn.decomp})  can be discarded.

In the first summation of (\ref{eq:Fn.decomp}) the condition $\sum_{i=0}^n \alpha_i = 0$ must hold by the residue theorem, and so the sum may be replaced by one of the form
\begin{align*}
\sum_{i=0}^{n-1} \gamma_{i}(x_1,\ldots x_n) \zuta(x_i,x_{i+1},x_{n+1}),
\end{align*}
with a corresponding modification of $c$. Next we note that the difference
\begin{align*}
\beta(x_1,\ldots x_n) \left( \wp(x_i-x_{n+1}) - \wp(x_{j}-x_{n+1}) \right) = \partial_{x_{n+1}} \left(\beta(x_1,\ldots x_n) \zuta(x_i, x_j, x_{n+1}) \right)
\end{align*}
is a total derivative. So we may write
\begin{align}\label{eq:Fn.decomp.2}
\begin{split}
f(x_1, \ldots x_{n+1}) = {} & c(x_1,\ldots x_n) + \sum_{i=0}^{n-1} \gamma_{i}(x_1,\ldots x_n) \zuta(x_i,x_{i+1},x_{n+1}) \\
&+ \beta(x_1,\ldots x_n) \wp(x_n-x_{n+1}).
\end{split}
\end{align}
It is straightforward to show that modification of any of $c, \beta, \alpha_i$ in (\ref{eq:Fn.decomp.2}) by a total derivative modifies $f$ by a total derivative.

\end{nolabel}
\begin{lem}\label{funct.induc}
A set of generators of $\CF_{n+1}$ is furnished by the functions $c$, $\beta \wp(x_n-x_{n+1})$ $\gamma_i \zuta(x_i,x_{i+1},x_{n+1})$, where $c, \beta$, and $\gamma_i$, $i=0,\ldots n-1$ run over any set of generators of $\CF_n$.
\end{lem}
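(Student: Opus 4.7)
The plan is to begin from the decomposition established in the preceding discussion: every $f \in \mathring{\G}_{n+1}$ may, modulo total derivatives in $\mathring{\G}_{n+1}$, be put in the form \eqref{eq:Fn.decomp.2}, namely
\[ f \equiv c + \sum_{i=0}^{n-1} \gamma_i\, \zuta(x_i, x_{i+1}, x_{n+1}) + \beta\, \wp(x_n - x_{n+1}), \]
with $c, \gamma_i, \beta \in \mathring{\G}_n$. What remains to verify is that each of $c, \gamma_i, \beta$ may be chosen to lie in a prescribed set of representatives of a generating set of $\CF_n$; equivalently, that modifying any one of these coefficients by a total derivative in $\mathring{\G}_n$ alters $f$ by a linear combination of the three types of claimed generators modulo total derivatives in $\mathring{\G}_{n+1}$.

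First I would dispatch the two easy cases. For the summand $c$ there is nothing to do, since a total derivative in $\mathring{\G}_n$ is \emph{a fortiori} one in $\mathring{\G}_{n+1}$. For $\beta\, \wp(x_n - x_{n+1})$, replacing $\beta$ by $\partial_{x_j} h$ with $j \in \{1,\ldots,n\}$ produces a total derivative directly when $j < n$; for $j = n$, combining the identity $\partial_{x_n} \wp(x_n - x_{n+1}) = -\partial_{x_{n+1}} \wp(x_n - x_{n+1})$ with $\partial_{x_{n+1}} h = 0$ allows me to rewrite $(\partial_{x_n} h)\wp = \partial_{x_n}(h\wp) + \partial_{x_{n+1}}(h\wp)$, again a total derivative.

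The hard part, and the main obstacle, is the summand $\gamma_i\, \zuta(x_i, x_{i+1}, x_{n+1})$: modifications $\partial_{x_j} k$ with $j \notin \{i, i+1\}$ are innocuous, but for $j \in \{i, i+1\}$ they are not themselves total derivatives. The key computation $\partial_{x_i} \zuta(x_i, x_{i+1}, x_{n+1}) = \wp(x_i - x_{n+1}) - \wp(x_i - x_{i+1})$ yields
\[ (\partial_{x_i} k)\, \zuta(x_i, x_{i+1}, x_{n+1}) \equiv k\, \wp(x_i - x_{i+1}) - k\, \wp(x_i - x_{n+1}) \pmod{\text{total derivatives}}, \]
and the second summand reduces further to the type 2 element $-k\, \wp(x_n - x_{n+1})$ modulo a total derivative via the identity $\partial_{x_{n+1}} \zuta(x_i, x_n, x_{n+1}) = \wp(x_n - x_{n+1}) - \wp(x_i - x_{n+1})$ (valid since $i \leq n-1 \neq n$). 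The case $j = i+1$ is completely symmetric. The new coefficients $k\,\wp(x_i - x_{i+1})$ and $-k$ produced lie in $\mathring{\G}_n$ and may themselves be expanded in the prescribed generating set of $\CF_n$; crucially, no new type 3 terms arise in this reduction, so no iteration is required and the argument closes.
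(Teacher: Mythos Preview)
Your proof is correct and follows the paper's strategy, which simply declares the final coefficient-replacement step ``straightforward'' without writing it out. Your explicit treatment of the $\gamma_i$ case is in fact sharper than the paper's one-line assertion: you correctly observe that modifying $\gamma_i$ by a total derivative does not yield a pure total derivative in $f$ but rather residual type-1 and type-2 terms, and your remark that no new type-3 terms arise is precisely what makes the reduction terminate.
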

\begin{nolabel}
Let $b_n = h^{n}(\mathring{U}^{(n)})$. From Lemma \ref{funct.induc} it follows that $b_n \leq (n+1)b_{n-1}$. In fact the construction of Section \ref{funct.induc.sec} may be refined to yield $b_n \leq (n+1)b_{n-1} - b_{n-2}$.
\end{nolabel}
\begin{lem}
The classes in $\CF_2$ of the the five functions
\begin{align*}
1, \quad \zuta(0, x_1, x_2), \quad \wp(x_1), \quad \ZZZ(0, x_1, x_2), \quad \text{and} \quad \wp(x_1)\wp(x_2)
\end{align*}
constitute a basis.
\end{lem}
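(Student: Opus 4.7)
My plan is to combine the inductive description of generators from Lemma \ref{funct.induc} with the known dimension $\dim \CF_2 = h^2(\mathring{U}^{(2)}) = 5$ from the table, so that it suffices to exhibit the five proposed classes as a spanning set.

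First I would determine $\CF_1$. A meromorphic elliptic function on $X$ with pole only at $0$ is a polynomial in $\wp$ and $\wp'$, and all of $\wp',\wp'',\ldots$ are total $\partial_{x_1}$-derivatives of elliptic functions while $\wp$ itself is not (its only antiderivative $-\zeta$ is quasi-elliptic, so it is not in $\mathring{\G}_1$). Together with $h^1(\mathring{U}^{(1)})=2$ this gives $\CF_1 = \C\cdot 1 \oplus \C\cdot \wp$.

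Next, applying Lemma \ref{funct.induc} with $n=1$ gives a spanning set of $\CF_2$ consisting of the six classes
\[
1,\ \wp(x_1),\ \wp(x_1-x_2),\ \wp(x_1)\wp(x_1-x_2),\ \zuta(0,x_1,x_2),\ \wp(x_1)\zuta(0,x_1,x_2).
\]
I would then reduce these six classes to the desired five via three total-derivative identities. Differentiating $\zuta(0,x_1,x_2)=-\zeta(x_1)+\zeta(x_1-x_2)+\zeta(x_2)$ gives
\begin{align*}
\partial_{x_2}\zuta(0,x_1,x_2) &= \wp(x_1-x_2)-\wp(x_2),\\
\partial_{x_2}\bigl(\wp(x_1)\zuta(0,x_1,x_2)\bigr) &= \wp(x_1)\bigl(\wp(x_1-x_2)-\wp(x_2)\bigr),
\end{align*}
so that $[\wp(x_1-x_2)]=[\wp(x_2)]=[\wp(x_1)]$ (the last equality using $\partial_{x_1}\zuta$) and $[\wp(x_1)\wp(x_1-x_2)]=[\wp(x_1)\wp(x_2)]$ in $\CF_2$. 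Evaluating the definition $\ZZZ(x,y,z)=\wp(x-y)\zuta(x,y,z)+\tfrac12\wp'(x-y)$ at $(0,x_1,x_2)$, using that $\wp$ is even and $\wp'$ odd, gives
\[
\ZZZ(0,x_1,x_2) = \wp(x_1)\,\zuta(0,x_1,x_2) - \tfrac{1}{2}\wp'(x_1),
\]
and since $\wp'(x_1)=\partial_{x_1}\wp(x_1)$ is a total derivative we obtain $[\wp(x_1)\zuta(0,x_1,x_2)] = [\ZZZ(0,x_1,x_2)]$.

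Combining these reductions, the six generators collapse onto the five classes $1,\ \zuta(0,x_1,x_2),\ \wp(x_1),\ \ZZZ(0,x_1,x_2),\ \wp(x_1)\wp(x_2)$, which therefore span $\CF_2$. Since $\dim \CF_2=5$ any spanning set of cardinality $5$ is automatically a basis, completing the proof. The only nontrivial step is organizing the three derivative identities so that the reduction produces \emph{exactly} the stated five functions (and not, say, $\wp(x_1-x_2)$ in place of $\wp(x_1)\wp(x_2)$); the delicate point is reduction (b), where one must multiply by $\wp(x_1)$ before taking the derivative in $x_2$ to legitimately exchange $\wp(x_1-x_2)$ for $\wp(x_2)$ inside a product.
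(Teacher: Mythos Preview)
Your proof is correct and follows essentially the same approach as the paper: apply the inductive generating set of Lemma~\ref{funct.induc} to the basis $\{1,\wp(x_1)\}$ of $\CF_1$, reduce the resulting six generators to five via total-derivative identities, and conclude using $\dim\CF_2 = 5$. The paper's writeup is terser (it records $\wp(x_2)$ and $\wp(x_1)\wp(x_2)$ directly, having already absorbed the $\wp(x_1-x_2)\equiv\wp(x_2)$ reduction into the inductive step, and leaves the identification $[\wp(x_1)\zuta(0,x_1,x_2)]=[\ZZZ(0,x_1,x_2)]$ as ``follows immediately''), but the content is the same.
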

\begin{proof}
Applying the construction of Lemma \ref{funct.induc} to the basis $\{1, \wp(x_1)\}$ of $\CF_1$ yields the set of functions
\begin{align*}
&1, &&\wp(x_1) \\
&\zuta(0, x_1, x_2), &&\wp(x_1)\zuta(0, x_1, x_2), \\
&\wp(x_2), &&\wp(x_1)\wp(x_2).
\end{align*}
We may omit $\wp(x_2)$ since it is equivalent to $\wp(x_1)$ modulo a total derivative. Since $h^2(\mathring{U}^{(2)}) = 5$ the remaining functions form a basis of $\CF_2$. The lemma follows immediately.
\end{proof}

\section{Explicit Differentials in low Degree}\label{sec:diff}

\begin{nolabel}
We have introduced the complex $A^\bullet = A^\bullet(q)$ in Proposition \ref{prop:chhom.equals.A}, and we have shown that its cohomology computes the chiral homology groups $H_i^{\text{ch}}(X, \CA_V)$ for $i=0,1$. We now combine the material of Sections \ref{sec:ch.hom.ell}, \ref{sec:elliptic.functions} and \ref{sec:totaro} to write $A^\bullet$ in more explicit terms. Recall
\begin{align*}
A^{-n} = \frac{\mathring{\G}_n \otimes V^{\otimes n+1}}{\left<\partial_{x_i}+T_i\right>_{i=1,\ldots,n}}
\end{align*}
for $n=0,1,2$. We thus denote elements of $A^{-1}$ by a representative of the form $f(x) \cdot a \otimes m$ and elements of $A^{-2}$ in the form $f(x, y) \cdot a \otimes b \otimes m$. For $n=1,2$ the relations in $A^{-n}$ may be used to `trade' a total derivative in $\mathring{\G}_n$ for a copy of $-T$ acting on one of the first $n$ tensor factors of $V^{\otimes n+1}$. In $A^{-1}$ for example $1 \cdot Ta \otimes m = 0$ and in fact as vector spaces
\begin{align}\label{eq:Am1.reduced}
A^{-1} \cong 1 \cdot (V/TV) \otimes V \oplus \wp(x) \cdot V \otimes V.
\end{align}
Similarly $A^{-2}$ is a certain quotient of the vector space $\C^5 \otimes V \otimes V \otimes V$ where the basis of $\C^5$ is labelled by the symbols
\begin{align*}
1, \quad \zuta(x,y,0), \quad \wp(x), \quad \ZZZ(x,y,0), \quad \text{and} \quad \wp(x)\wp(y).
\end{align*}
At certain points it will be convenient to work with terms of the form $\wp(x-y) \cdot a \otimes b \otimes m$ in $A^{-2}$ as well, even though they are expressible in terms in the five generators listed above. Indeed since $\partial_x \zuta(x, y, 0) = \wp(x)-\wp(x-y)$ we have
\begin{align}\label{eq:relation.in.A2}
\wp(x-y) \cdot a \otimes b \otimes m = \wp(x) \cdot a \otimes b \otimes m + \zuta(x, y, 0) \cdot Ta \otimes b \otimes m.
\end{align}

Since the differential $d_1 : A^{-1} \rightarrow A^0$ is given by (\ref{1.0.coeff}) it follows that the $0^{\text{th}}$ cohomology of $A^\bullet$ is
\begin{align*}
H_0^{\text{ch}}(X, \CA_V) = H^0(A^\bullet) = \frac{V}{V(0)V + V_{(\wp)}V}.
\end{align*}
In the following lemma we use (\ref{2.1.coeff}) to compute the images under the differential $d_2 : A^{-2} \rightarrow A^{-1}$ of our chosen set of generators of $A^{-2}$.
\end{nolabel}

\begin{lem}\label{lem:2.1.diff}
The differential $d_2 : A^{-2} \rightarrow A^{-1}$ is given explicitly on generators as follows.
\begin{align} \label{1.diff}
\begin{split}
d_2\left(1 \cdot a \otimes b \otimes m\right)
= {} & a \otimes b(0)m - a(0)b \otimes m - b \otimes a(0)m,
\end{split}
\end{align}

\begin{align}\label{wp1.diff}
\begin{split}
d_2\left(\wp(x-y) \cdot a \otimes b \otimes m\right)
= {} & - a_{(\wp)}b \otimes m \\
&\hspace{-2cm}+ \wp(x) \sum_{j \in \Z_{\geq 0}} \left( T^{(j)} a \otimes b{(j)}m - T^{(j)}b \otimes a{(j)}m \right),
\\
\end{split}
\end{align}

\begin{align}\label{wp2.diff}
\begin{split}
 d_2\left(\wp(x) \cdot a \otimes b \otimes m\right)
= {} & -b \otimes a_{(\wp)}m \\
&+ \wp(x) \left( a \otimes b{(0)}m + b(0)a \otimes m \right),
\\
\end{split}
\end{align}

\begin{align}\label{zeta.diff}
\begin{split}
 d_2\left(-\zuta(x,y,0)) \cdot a \otimes b \otimes m\right) 
= {} & a_{(\zeta)}b \otimes m - b \otimes a_{(\zeta)}m - a \otimes b_{(\zeta)}m \\
&\hspace{-2cm}+ \wp(x) \int{\{a(0)b\}} \otimes m \\
&\hspace{-2cm}-\wp(x) \sum_{j \in \Z_{\geq 0}} \frac{1}{j+1} \left(T^{(j)}a \otimes b(j+1)m + T^{(j)}b \otimes a(j+1)m \right),
\\
\end{split}
\end{align}

\begin{align}\label{ZZZ.diff}
\begin{split}
 d_2\left(\ZZZ(x, y, 0) \cdot a \otimes b \otimes m\right) 
= {} & -\wp(x) \int{\{a_{(\wp)}b\}} \otimes m \\
&\hspace{-2cm}+\wp(x) \sum_{j \in \Z_{\geq 0}} \frac{1}{j+1} \left(T^{(j)}a \otimes b_{(x^{j+1}\wp)}m + T^{(j)}b \otimes a_{(x^{j+1}\wp)}m \right) \\
&\hspace{-2cm}+(2\pi i)^2 q \frac{d}{dq} \left( a_{(\zeta)}b \otimes m - a \otimes b_{(\zeta)}m - b \otimes a_{(\zeta)}m \right),
\\
\end{split}
\end{align}

\begin{align}\label{wpwp.diff}
\begin{split}
 d_2\left(\wp(x)\wp(y) \cdot a \otimes b \otimes m\right)
= {} & \wp(x) \left(  a \otimes b_{(\wp)}m - b \otimes a_{(\wp)}m \right)
- \wp(x) \int \left\{ a_{(\wp')}b \right\} \otimes m \\
&\hspace{-0cm}- \wp(x) \left(  a_{(\wp)}b \otimes m - b_{(\wp)}a \otimes m \right) \\
&\hspace{-0cm}-(2 \pi i)^{2} q \frac{d}{dq} a_{(\wp)}b \otimes m.
\end{split}
\end{align}
\end{lem}

We remark that equation (\ref{wp1.diff}) is redundant, in view of the relation (\ref{eq:relation.in.A2}). But it is convenient to record it for later use.

\begin{proof}
Let $f(t)$ be an elliptic function meromorphic with possible pole at $t=0$. Then by (\ref{2.1.coeff}) the differentials of $f(x) \cdot a \otimes b \otimes m$, $f(y) \cdot a \otimes b \otimes m$ and $f(x-y) \cdot a \otimes b \otimes m$ are, respectively,
\begin{align}\label{indecomp.diffs}
\begin{split}
&f(x) a \otimes b(0)m
- \sum_{j \in \Z_{\geq 0}} \partial^{(j)}f(x) a(j)b \otimes m
- b \otimes a_{(f)}m, \\
&a \otimes b_{(f)}m
- f(x) a(0)b \otimes m
- f(x) b \otimes a(0)m, \\
\text{and} \quad &\sum_{j \in \Z_{\geq 0}} (-1)^j \partial^{(j)}f(x) a \otimes b(j)m
- a_{(f)}b \otimes m
- \sum_{j \in \Z_{\geq 0}} \partial^{(j)}f(-x) b \otimes a(j)m.
\end{split}
\end{align}
From (\ref{indecomp.diffs}) we get (\ref{1.diff}) immediately. We also recover (\ref{wp1.diff}) easily using the relation $\partial_x + T_1 = 0$ and the fact that $\wp(t)$ is an even function of $t$. To prove (\ref{wp2.diff}) we recall, from (\ref{eq:skew-kills}) that
\[
d\left(\wp(x) \cdot a \otimes b \otimes m\right) = - d\left(\wp(y) \cdot b \otimes a \otimes m\right).
\]
The result follows directly from (\ref{indecomp.diffs}) as before.

By (\ref{indecomp.diffs}) the differential of $(\zeta(x) - \zeta(y) - \zeta(x-y)) \cdot a \otimes b \otimes m$ is
\begin{align*}
&a_{(\zeta)}b \otimes m - b \otimes a_{(\zeta)}m - a \otimes b_{(\zeta)}m \\
&-\sum_{j \geq 1} \partial^{(j)}\zeta(x) a(j)b \otimes m
-\sum_{j \geq 1} (-1)^j \partial^{(j)}\zeta(x) a \otimes b(j)m
-\sum_{j \geq 1} (-1)^j \partial^{(j)}\zeta(x) b \otimes a(j)m.
\end{align*}
Here we have used that $\zeta(t)$ is an odd function of $t$. Using Lemma \ref{lem:skew-symmetry} the expression above is reduced to (\ref{zeta.diff}).

We now compute the differential of $\ZZZ(x, y, 0) a \otimes b \otimes m$ using Lemma \ref{surprising.symmetry} to make the calculations more comfortable. The first term $\res_{y=0}f(x, y) a \otimes b(y)m$ of (\ref{2.1.coeff}) becomes
\begin{equation}\label{eq:5.1.1}
a \otimes b_{(\wp \zeta)}m + \frac{1}{2} a \otimes b_{(\wp')} m + \sum_{j \geq 1} (-1)^j \partial^{(j)}\zeta(x) a \otimes b_{(x^j \wp)} m 
\end{equation}
The second term $\res_{y=x}f(y, x) a(y-x)b \otimes m$ becomes
\begin{equation}\label{eq:5.1.3}
a_{(\wp \zeta)} b \otimes m + \frac{1}{2} a_{(\wp')}b \otimes m - \sum_{j \geq 1} \partial^{(j)}\zeta(x) a_{(x^{j} \wp)}b \otimes m 
\end{equation}
The third term $\res_{y=0}f(y, x) b \otimes a(y)m$ becomes 
\begin{align}\label{eq:5.1.3b}
-b \otimes a_{(\wp \zeta)}m - \frac{1}{2} b \otimes a_{(\wp')} m + \sum_{j \geq 1} \partial^{(j)}\zeta(-x) b \otimes a_{(x^j \wp)} m. 
\end{align}
By Lemma \ref{lem:deq}, equation (\ref{eq:5.1.3}) becomes
\begin{equation}\label{eq:5.1.3deq}
-(2\pi i)^2 q \frac{d}{dq} a_{(\zeta)}b \otimes m + \wp(x) \int \left\{ a_{(\wp)}b \right\} \otimes m.
\end{equation}
Similar reductions are performed on (\ref{eq:5.1.1}) and (\ref{eq:5.1.3b}). Collecting, we obtain (\ref{ZZZ.diff}) as (\ref{eq:5.1.1}) $-$ (\ref{eq:5.1.3}) $-$ (\ref{eq:5.1.3b}).

Finally we come to the image of $\wp(x) \wp(y) a \otimes b \otimes m$. The first and third terms of \eqref{2.1.coeff} together yield
\begin{align}\label{eq:wp.wp.1.3.contrib}
\wp(x) \left( a \otimes b_{(\wp)}m - b \otimes a_{(\wp)}m \right)
\end{align}
immediately. Computation of the remaining term $\res_{y=x} \wp(x) \wp(y) a(y-x)b \otimes m$ is more involved. To facilitate the calculation we exploit Lemma \ref{surprising.symmetry} once again, as we did in proving (\ref{ZZZ.diff}). Indeed
\begin{align}
\wp(x)\wp(y)
= {} & - \partial_x \left(\wp(y) \zeta(x) \right) \nonumber \\
= {} & \partial_x \left( \wp(y) \left[ \zeta(x-y) - \zeta(x) + \zeta(y) \right] + \frac{1}{2} \wp'(y) - \frac{1}{2} \wp'(y) - \wp(y) \zeta(y) - \wp(y) \zeta(x-y) \right) \nonumber \\
= {} & \partial_x \ZZZ(y, 0, x) + \wp(y)\wp(x-y) \nonumber \\
= {} & \partial_x \ZZZ(x, y, 0) + \wp(y)\wp(x-y) \nonumber \\
= {} & \partial_x \left(  \wp(x-y) \left[ \zeta(x-y) - \zeta(x) + \zeta(y) \right] + \frac{1}{2} \wp'(x-y) \right) + \wp(y)\wp(x-y) \nonumber \\
= {} &\wp'(x-y) \zeta(x-y) - \wp(x-y)^2 + \frac{1}{2} \wp''(x-y) \nonumber \\
&+ \wp'(x-y) \left[ \zeta(y) - \zeta(x) \right] + \wp(x-y) \left[ \wp(x) + \wp(y) \right] \nonumber \\
\begin{split}\label{eq:rho.rho.rewrite}
= {} &\wp'(y-x) \zeta(y-x) - \wp(y-x)^2 + \frac{1}{2} \wp''(y-x) \\
&+ \wp'(y-x) \left[ \zeta(x) - \zeta(y) \right] + \wp(y-x) \left[ \wp(x) + \wp(y) \right].
\end{split}
\end{align}
The expression (\ref{eq:rho.rho.rewrite}) consists of five terms. The first three terms contribute
\begin{align*}
a_{(\wp' \zeta)} b \otimes m - a_{(\wp^2)} b \otimes m + \frac{1}{2} a_{(\wp'')} b \otimes m
\end{align*}
to $\res_{y=x} \wp(x) \wp(y) a(y-x)b \otimes m$. The derivative (in the first entry) of the equality (\ref{eq:deq}) is
\begin{align*}
(2\pi i)^2 q \frac{d\wp}{dq} = \zeta \wp' - \wp^2 + \frac{1}{2}\wp''.
\end{align*}
The contribution of the first three terms, written above, therefore becomes
\begin{align*}
(2\pi i)^2 q \frac{d}{dq} a_{(\wp)}b \otimes m.
\end{align*}
The fourth term of (\ref{eq:rho.rho.rewrite}) contributes
\begin{align*}
\wp(x) \sum_{j \in \Z_{\geq 0}} \frac{1}{j+1} (-T)^{(j)} (a_{(x^{j+1} \wp')} b) \otimes m = \wp(x) \int \left\{ a_{(\wp')}b \right\} \otimes m.
\end{align*}
Using (\ref{eq:skew-kills}) the contribution of the fifth term of (\ref{eq:rho.rho.rewrite}) becomes
\begin{align*}
&\res_{y=x} \wp(y-x) \wp(x) a(y-x)b \otimes m - \res_{y=x} \wp(y-x) \wp(x) b(y-x)a \otimes m \\
&\,\,\,\,\,\,\, = \wp(x) a_{(\wp)}b \otimes m - \wp(x) b_{(\wp)}a \otimes m.
\end{align*}
Collecting terms, we see that $d \left( \wp(x) \wp(y) \cdot a \otimes b \otimes m \right)$ equals (\ref{eq:wp.wp.1.3.contrib}) minus
\begin{align*}
(2 \pi i)^{2} q \frac{d}{dq} a_{(\wp)}b \otimes m + \wp(x) \int \left\{ a_{(\wp')}b \right\} \otimes m + \wp(x) a_{(\wp)}b \otimes m - \wp(x) b_{(\wp)}a \otimes m,
\end{align*}
as required.
\end{proof}

\section{Passage to the $q=0$ limit}\label{sec:limit}
\begin{nolabel}
It turns out that the chiral homologies $H^{\text{ch}}_i(X, \CA)$, for $i=0,1$, of the elliptic curve $X$ simplify at $q=0$, which corresponds geometrically to the limit in which $X$ degenerates to a nodal cubic. We shall see that at $q=0$ the homologies are related to the Zhu algebra of $V$.

First we introduce subgroups $B^{-n} \subset A^{-n}$ for $n=0,1,2$, and then we show that at $q=0$ they constitute a subcomplex. We define
\begin{align}\label{q=0.subcomplex.B0}
B^0 = 1 \cdot V_{(\wp)}V \subset 1 \cdot V = A^0,
\end{align}
next we define $B^{-1}$ to be the image in
\[
A^{-1} \cong 1 \cdot (V/TV) \otimes V + \wp(x) \cdot V \otimes V
\]
of
\begin{align}\label{q=0.subcomplex.B1}
1 \cdot V_{(\wp)}V \otimes V + 1 \cdot V \otimes V_{(\wp)}V + \wp(x) \cdot V \otimes V.
\end{align}
Let us note here that $TV \subset V_{(\wp)}V$ since $a_{(\wp)}\vac = Ta$. Finally we define $B^{-2}$ to be the image in $A^{-2}$ of
\begin{align}\label{q=0.subcomplex.B2}
\begin{split}
\wp(x-y) \cdot V \otimes V \otimes V + & \wp(x) \cdot V \otimes V \otimes V \\
& {} + \ZZZ(x,y,0) \cdot V \otimes V \otimes V + \wp(x) \wp(y) \cdot V \otimes V \otimes V.
\end{split}
\end{align}
\end{nolabel}
\begin{lem}
The subgroups $B^i$ defined above form a subcomplex $B^\bullet \subset A^\bullet(q=0)$.
\end{lem}
\begin{proof}
First we demonstrate $d B^{-1} \subset B^0$. It is obvious that $d(\wp(x) \cdot V \otimes V) \subset B^0$. Now
\begin{align*}
d(1 \cdot a \otimes b_{(\wp)}m)
&= a(0)(b_{(\wp)}m)
= a(0)(\res_z \wp(z)b(z)m ) \\
&= \res_z \wp(z) \left( [a(0), b(z)]m + b(z)a(0)m \right) \\
&= (a(0)b)_{(\wp)}m + b_{(\wp)}(a(0)m) \in B^0
\end{align*}
(using here that $[a(0), b(z)] = [a(0)b](z)$). On the other hand
\begin{align*}
d(1 \cdot a_{(\wp)}b \otimes m) 
&= (a_{(\wp)}b)(0)m 
= \res_x\res_w \wp(x)[a(x)b](w)m \\ 
&= \res_x\res_z\res_w \wp(x) \left( a(z)b(w)mi_{z,w} - b(w)a(z)mi_{z,w} \right) \delta(x, z-w) \\
&= \res_z\res_w \left( a(z)b(w)mi_{z,w} - b(w)a(z)mi_{z,w} \right)  \wp(z-w) \\
&= \res_z\res_w\sum_{j \in \Z_{\geq 0}} \left( a(z)b(w)m (-w)^j \partial^{(j)}\wp(z)
- b(w)a(z)m z^j \partial^{(j)}\wp(-w) \right) \\
&= \sum_{j \in \Z_{\geq 0}} (-1)^j \left( a_{(\partial^{(j)}\wp)}(b(j)m) - b_{(\partial^{(j)}\wp)}(a(j)m) \right).
\end{align*}
Since $(Ta)_{(f)}b = -a_{(\partial f)}b$ we see that $d(1 \cdot a_{(\wp)}b \otimes m) \in B^0$.

We have not yet used the condition $q=0$, we use it to guarantee $dB^{-2} \subset B^{-1}$. More precisely the inclusion follows from the explicit formulas of Lemma \ref{lem:2.1.diff}; the terms of the form $q \frac{d}{dq}(\cdots)$ vanish at $q=0$, and all remaining terms manifestly lie in $B^{-1}$.
\end{proof}

\begin{nolabel}
Now let us put $Q^\bullet = A^\bullet(q=0) / B^\bullet$ and consider the long exact sequence in cohomology associated with $0 \rightarrow B^\bullet \rightarrow A^\bullet(q=0) \rightarrow Q^\bullet \rightarrow 0$, namely
\begin{align}\label{long.exact.sequence}
\begin{split}
\xymatrix{
H^{0}(B^\bullet) \ar@{->}[r] & H^{0}(A^\bullet(q=0)) \ar@{->}[r] & H^{0}(Q^\bullet) \ar@{->}[r] & 0 \\
  H^{-1}(B^\bullet) \ar@{->}[r] & H^{-1}(A^\bullet(q=0)) \ar@{->}[r] & H^{-1}(Q^\bullet) \ar@{->}[ull] \\
 & \cdots \ar@{->}[r] & H^{-2}(Q^\bullet) \ar@{->}[ull] \\  
}
\end{split}
\end{align}
Clearly $d : B^{-1} \rightarrow B^0$ is surjective. Therefore we have an isomorphism
\begin{align*}
H^{0}(A^\bullet(q=0)) \rightarrow H^{0}(Q^\bullet)
\end{align*}
and a surjection
\begin{align*}
H^{-1}(A^\bullet(q=0)) \rightarrow H^{-1}(Q^\bullet)
\end{align*}
whose kernel is a quotient of $H^{-1}(B^\bullet)$. In the next section we compute $H^{-1}(B^\bullet)$ using the spectral sequence associated with a filtration.
\end{nolabel}
\begin{nolabel}
As we saw in equation (\ref{eq:zeta.to.f.reduction}) the specialisations at $q=0$ of $\zeta(z)$ and $\wp(z)$ are the functions $f(z)-\pi i$ and $g(z)$ of (\ref{f.and.g.def}). By the second of these facts we see that as vector spaces
\begin{align*}
Q^0 = A^0(q=0) / B^0 \cong V / V_{(g)}V = \zhu(V).
\end{align*}
We also have
\begin{align*}
Q^{-1} = A^{-1}(q=0) / B^{-1}
\cong \frac{V \otimes V}{V_{(g)}V \otimes V + V \otimes V_{(g)}V}
\cong \zhu(V) \otimes \zhu(V)
\end{align*}
as vector spaces. In the middle isomorphism here we have used that $TV \subset V_{(\wp)}V$. The differential $Q^{-1} \rightarrow Q^0$ is given by
\begin{align*}
a \otimes b \mapsto a(0)b = \frac{1}{2\pi i} \left( ab - ba \right),
\end{align*}
where $ab = a_{(f)}b$ is the Zhu product. Here we have used (\ref{eq:Zhu.commutator}). Up to a sign, then, the differential coincides with that of the bar complex of $\zhu(V)$ according to the convention (\ref{eq:bar.complex.conv}) that we have adopted. It follows that
\begin{align*}
H^0(Q^\bullet) \cong \Hoch_0(\zhu(V)).
\end{align*}
Now we will show that
\begin{align}\label{Q.zhu.isom}
H^{-1}(Q^\bullet) \cong \Hoch_1(\zhu(V)).
\end{align}
We start by noting that $H^{-1}(Q^\bullet)$ is the quotient of $Q^{-1}$ by the images of $1 \cdot V \otimes V \otimes V$ and $\zuta(x, y, 0) \cdot V \otimes V \otimes V$. According to (\ref{1.diff}) the image of $1 \cdot a \otimes b \otimes m$ is
\begin{align}\label{raw.Lie.diff}
a \otimes b(0)m - a(0)b \otimes m - b \otimes a(0)m = \frac{1}{2\pi i} \left( a \otimes [b, m] - [a, b] \otimes m - b \otimes [a, m] \right),
\end{align}
and according to (\ref{zeta.diff}) the image of $-\zuta(x, y, 0) \cdot a \otimes b \otimes m$ is
\begin{align}\label{raw.zhu.hoch}
a_{(\zeta)}b \otimes m - b \otimes a_{(\zeta)}m - a \otimes b_{(\zeta)}m.
\end{align}
By the relation $\zeta(z, q=0) = f(z) - \pi i$ together with (\ref{eq:Zhu.commutator}) again, we see (at $q=0$) that
\begin{align*}
a_{(\zeta)}b = a_{(f - \pi i)}b = ab - \frac{1}{2}(ab - ba) = \frac{1}{2}(ab + ba).
\end{align*}
Therefore the expression (\ref{raw.zhu.hoch}) reduces to one half times
\begin{align}\label{impure.zhu.hoch}
(ab + ba) \otimes m - b \otimes (am + ma) - a \otimes (bm + mb). 
\end{align}
Now we recall the Hochschild differential
\begin{align}\label{pure.zhu.hoch}
d_{\text{Hoch}}(a \otimes b \otimes m) &= b \otimes ma - ab \otimes m + a \otimes bm
\end{align}
and we rewrite (\ref{impure.zhu.hoch}) as
\begin{align*}
&\left( ab \otimes m - b \otimes ma - a \otimes bm \right)
+ \left( ba \otimes m - b \otimes am - a \otimes mb \right) \\
= {} & -d_{\text{Hoch}}(a \otimes b \otimes m) - d_{\text{Hoch}}(b \otimes a \otimes m).
\end{align*}
On the other hand the right hand side of (\ref{raw.Lie.diff}) is (ignoring an overall factor of $2\pi i$)
\begin{align*}
&\left( a \otimes bm - ab \otimes m + b \otimes ma \right)
+ \left( -a \otimes mb + ba \otimes m - b \otimes am \right) \\
= {} & d_{\text{Hoch}}(a \otimes b \otimes m) - d_{\text{Hoch}}(b \otimes a \otimes m).
\end{align*}
It follows that $d_2(B^{-2})$, which is the sum of the images of (\ref{impure.zhu.hoch}) and (\ref{raw.Lie.diff}), coincides with (\ref{pure.zhu.hoch}) the image of the Hochschild differential. Thus we have proved (\ref{Q.zhu.isom}).
\end{nolabel}

\section{Li's Filtrations}\label{sec:filtr}

In the previous section we have seen that there is a surjection
\begin{align*}
H^{-1}(A^\bullet(q=0)) \rightarrow \Hoch_1(\zhu(V))
\end{align*}
whose kernel is controlled by the cohomology group $H^{-1}(B^\bullet)$. In this section we introduce a filtration on $B^\bullet$ and analyse the cohomology through the associated spectral sequence.

\begin{nolabel}
Let $V$ be a vertex algebra. The \emph{Li filtration} \cite{Lifilt} $F^\bullet{V}$ is the decreasing filtration on $V$ defined by putting $F^pV$ to be the span of the vectors
\begin{align*}
a^1{(-n_1-1)}\cdots a^r{(-n_r-1)}b,
\end{align*}
where $r \geq 0$, $a^1, \ldots, a^r, b \in V$, and $n_1, \ldots, n_r \in \Z_{+}$, such that $\sum_j n_j \geq p$.

The product $a \cdot b = a{(-1)}b$ induces a commutative associative algebra structure on the associated graded $\gr^FV$ with $\vac$ as unit. Following \cite{Arakawa.lisse} the \emph{singular support} of $V$ is by definition the scheme
\begin{align*}
\sing(V) = \Spec{(\gr^FV)}.
\end{align*}
The translation operator $T$ turns $\gr^FV$ into a differential algebra, in other words $\sing(V)$ comes equipped with a canonical vector field. Clearly $\gr^F_0{V}$ coincides with Zhu's $C_2$-algebra $R_V$ of Section \ref{sec:c2.def}.
\end{nolabel}

\begin{nolabel}
Now suppose that $V$ is (quasi)conformal. In \cite{Li.PVA} Li introduced another filtration on $V$, this one increasing. Let $\{a^i | i \in I\}$ be a strong generating set of $V$, recall this means that the vectors
\begin{align}\label{eq:VA.PBW.monom}
a^{i_1}{(-n_1)}\cdots a^{i_r}{(-n_r)}\vac,
\end{align}
where $i_1, \ldots, i_r \in I$ and $n_1, \ldots, n_r \in \Z_{> 0}$, span $V$. We assume furthermore that the $a^i$ are elements of homogeneous conformal weight. The \emph{standard filtration} $G_\bullet V$, relative to the choice of strong generating set, is the increasing filtration on $V$ defined by putting $G_p V$ to be the span of the vectors (\ref{eq:VA.PBW.monom}) such that $\sum_j \D(a^{i_j}) \leq p$, where $\D(a)$ denotes the conformal weight of $a$.

Let $a \in G_pV$ and $b \in G_qV$, then for all $n \in \Z$ one has $a(n)b \in G_{p+q}V$. In particular we have, for any Laurent series $f$, that
\begin{align}\label{eq:G.filtr.property.1}
a_{(f)}b \in G_{p+q}V.
\end{align}
Furthermore
\begin{align}\label{eq:G.filtr.property.2}
Ta \in G_pV
\end{align}
and
\begin{align}\label{eq:G.filtr.property.3}
a(n)b \in G_{p+q-1}V \quad \text{for $n \in \Z_{\geq 0}$.}
\end{align}
See \cite[Proposition 4.2, Theorem 4.6]{Li.PVA}.

For a (quasi)conformal vertex algebra both the Li filtration and the standard filtration are compatible with the conformal weight grading. Let $F^pV_\D = V_\D \cap F^pV$ and $G_pV_\D = V_\D \cap G_{p}V$. According to {\cite[Proposition 2.6.1]{Arakawa.lisse}} one has
\begin{align}\label{F.G.same}
F^pV_\D = G_{\D-p}V_\D
\end{align}
for all $\D, p \in \Z$. The equalities (\ref{F.G.same}) induce an isomorphism
\begin{align}\label{F.G.isom}
\gr^GV \rightarrow \gr^FV
\end{align}
of commutative differential algebras (even of Poisson vertex algebras). Although both sides of (\ref{F.G.isom}) are naturally $\Z_{\geq 0}$-graded, the isomorphism does not respect the grading.


The vertex algebra $V$ is said to be finitely strongly generated if it possesses a finite set of strong generators. It is well known \cite{GN03} that if $V$ is finitely strongly generated then $R_V = \gr_0^F V$ is an algebra of finite type.
\end{nolabel}

\begin{nolabel}
From equations (\ref{wp1.diff}) and (\ref{wp2.diff}) we see that elements of $1 \cdot V_{(\wp)}V \otimes V$ and of $1 \cdot V \otimes V_{(\wp)}V$ are equivalent, modulo boundaries of elements of $B^{-2}$, to elements of $\wp(x) \cdot V \otimes V$.  Thus it suffices to analyse the kernel of the restriction of $d : B^{-1} \rightarrow B^0$ to $\wp(x) \cdot V \otimes V$.

The standard filtration on $V$ induces an increasing filtration on $A^\bullet$, namely $G_p A^{-n}$ is defined to be the image in the quotient of the vector subspace generated by
\begin{align*}
\bigcup_{\sum p_i \leq p} \mathring{\G}_n \cdot G_{p_1}V \otimes \cdots \otimes G_{p_{n+1}}V.
\end{align*}
This is well-defined since the filtration $G$ is $T$-stable, i.e., because of (\ref{eq:G.filtr.property.2}). The filtration is compatible with the differential because of (\ref{eq:G.filtr.property.1}). We endow $B^\bullet$ with the induced filtration.

We recall the form (\ref{eq:def.Weierstrass.p}) of the Weierstrass function $\wp(x)$ and the property (\ref{eq:G.filtr.property.3}) of the standard filtration. It follows that the induced differential $d : \gr^GB^{-1} \rightarrow \gr^GB^0$ is given by
\begin{align*}
\wp(x) \cdot a \otimes b \mapsto 1 \cdot a{(-2)}b.
\end{align*}
The image of $\ZZZ(x, y, 0) \cdot a \otimes b \otimes m$, which at $q=0$ is
\begin{align*}
-\wp(x) \cdot \int{\{a_{(\wp)}b\}} \otimes m
+\wp(x) \cdot \sum_{j \in \Z_{\geq 0}} \frac{1}{j+1} \left(T^{(j)}a \otimes b_{(x^{j+1}\wp)}m + T^{(j)}b \otimes a_{(x^{j+1}\wp)}m \right),
\end{align*}
similarly becomes
\begin{align*}
-\wp(x) \cdot \left( a{(-1)}b \otimes m - a \otimes b{(-1)}m - b \otimes a{(-1)}m \right).
\end{align*}
in $\gr^GB^{-1}$. Finally the image of $\wp(x)\wp(y) a \otimes b \otimes m$ at $q=0$ is
\begin{align*}
\wp(x) \cdot \left( a \otimes b_{(\wp)}m - b \otimes a_{(\wp)}m \right) - \wp(x) \cdot \int \left\{ a_{(\wp')}b \right\} \otimes m 
- \wp(x) \cdot \left(  a_{(\wp)}b - b_{(\wp)}a \right) \otimes m.
\end{align*}
In $\gr^GB^{-1}$ the term $\wp(x) \cdot \left( a \otimes b_{(\wp)}m - b \otimes a_{(\wp)}m \right)$ becomes (we throw away all terms of the form $a(j)b$ for $j \geq 0$)
\begin{align*}
\wp(x) \cdot \left( a \otimes b(-2)m - b \otimes a(-2)m \right).
\end{align*}
Similarly the integral term becomes 
\begin{align*}
\wp(x) \cdot \left( a_{(x\wp')}b - \frac{1}{2}T(a_{(x^2\wp')}b) \right) \otimes m = \wp(x) \cdot \left( -2 a(-2)b + T(a(-1)b) \right) \otimes m.
\end{align*}
The remaining term becomes
\begin{align*}
\wp(x) \cdot \left( a(-2)b - b(-2)a \right) \otimes m 
= \wp(x) \cdot \left( 2 a(-2)b - T(a(-1)b)\right) \otimes m,
\end{align*}
where we have used skew-symmetry to rewrite $b(-2)a$ as $-a(-2)b + T(a(-1)b)$ plus terms that vanish in the associated graded. Clearly the integral term and subsequent term cancel, leaving us with
\begin{align*}
\wp(x) \cdot \left( a \otimes b(-2)m - b \otimes a(-2)m \right).
\end{align*}


Thus $H^{-1}(\gr^G B^\bullet)$ is the kernel of the map $\gr^GV \otimes \gr^GV \rightarrow \gr^GV$ defined by
\begin{align} \label{aTb}
a \otimes b \mapsto (Ta) \cdot b,
\end{align}
modulo the subspace generated by the elements
\begin{align}
&a \cdot b \otimes m - a \otimes b \cdot m - b \otimes a \cdot m, \label{element.to.kill.1} \\
\text{and} \quad &a \otimes (Tb) \cdot m - b \otimes (Ta) \cdot m. \label{element.to.kill.2}
\end{align}
In Section \ref{sec:conclusion} below we shall interpret these formulas as differentials in a Koszul complex.
\end{nolabel}

\begin{nolabel}
Associated with the increasing filtration $G_pB^\bullet$ is the spectral sequence in which the first page is $E^{p, q}_1 = H^{p+q}(\gr^{-p}B^{\bullet})$ and the differential of bidegree $(+1, 0)$ is induced by the connecting morphisms of
\begin{align*}
0 \rightarrow \frac{G_{p-1} V}{G_{p-2} V} \rightarrow \frac{G_p V}{G_{p-2} V} \rightarrow \frac{G_p V}{G_{p-1} V} \rightarrow 0.
\end{align*}
Since the filtration is exhaustive and bounded below, we have convergence
\begin{align}\label{ss.converge}
E^{p, q}_1 \Rightarrow H^{p+q}(B^\bullet).
\end{align}
\end{nolabel}

\section{K\"{a}hler Differentials and Arc Spaces}\label{sec:hoch}

\begin{nolabel}
Let $A$ be a commutative $\C$-algebra. The space $\Om_{A/\C}$ of K\"{a}hler differentials is the free $A$-module generated by symbols $df$ for $f \in A$, modulo the relations $d\alpha = 0$ for $\alpha \in \C$, $d(f+g) = df+dg$ and $d(fg) = f \cdot dg + g \cdot df$. It is well known that
\begin{align*}
\Hoch_0(A) = A \quad \text{and} \quad \Hoch_1(A) \cong \Om_{A/\C},
\end{align*}
where the latter isomorphism is induced by $a \otimes b \mapsto b \cdot da$ (the swap of factors is a consequence of our conventions for Hochschild homology). There is an isomorphism
\begin{align}\label{der.isom}
\Hom_A(\Om_{A/\C}, A) \cong \Der_\C(A),
\end{align}
associating to $\tau \in \Der_\C(A)$ the homomorphism $\iota_\tau : \Om_{A/\C} \rightarrow A$ defined by $\iota_\tau(f \cdot dg) = f \tau(g)$.

Let us fix $\tau \in \Der_\C(A)$ and consider the exterior algebra
\begin{align}\label{K.complex}
K^A_\bullet = \Sym_A(\Om_{A/\C}[1])
\end{align}
of the $A$-module $\Om_{A/\C}$. The morphism $\iota_\tau : \Om_{A/\C} \rightarrow A$ of $A$-modules uniquely extends, much as in Section \ref{sec:lie.homology}, to a differential on $K^A_\bullet$ which we also denote $\iota_\tau$.
\end{nolabel}

\begin{nolabel}
The canonical morphism $A \rightarrow \Om_{A/\C}$, sending $a$ to $da$, extends uniquely to a derivation of $K^A_\bullet$, known as the de Rham differential and denoted $d$. The Lie derivative $\Lie_\tau = [d, \iota_\tau]$ (given explicitly in formula (\ref{eq:Lie.derivative}) above) enhances $K^A_\bullet$ to a complex of differential $A$-modules.
\label{no:differential}
\end{nolabel}

\begin{nolabel}
Now let $A = \bigoplus_{n \in \Z_{\geq 0}} A^n$ be a $\mathbb{Z}_{\geq 0}$-graded commutative algebra, with a derivation $\tau$ of degree $+1$. The $A$-module $\Omega_{A/\C}$ acquires a natural $\Z_{\geq 0}$-grading by declaring $\deg (a \cdot db) = \deg(a) + \deg(b) + 1$. This grading in turn extends to a grading $K_\bullet = K^A_\bullet = \bigoplus_{n \in \Z_{\geq 0}} K_\bullet^n$ compatible with the differential $\iota_\tau$, and the homology $H_\bullet(K_\bullet, \iota_\tau) = \bigoplus_{n \in \Z_{\geq 0}} H_\bullet(K_\bullet^n, \iota_\tau)$ becomes a $\mathbb{Z}_{\geq 0}$-graded $A$-module. With respect to the internal $\mathbb{Z}_{\geq 0}$ gradation the operators $\iota_\tau$, $d$ and $\Lie_\tau$ are homogeneous of degree $0$, $+1$ and $+1$ respectively.
\label{no:jet-1}
\end{nolabel}

\begin{nolabel}\label{no:ideal}
Before proceeding we recall the notion of arc space. We shall need to consider only arc spaces of affine schemes in this work. For background theory we refer the reader to \cite{EinMustata}.

Let $X = \Spec A^0$ be an affine scheme of finite type. The arc space of $X$ consists of a scheme $JX = \Spec JA^0$ equipped with a vector field $\partial \in \Der(JA^0)$ and a morphism $JX \rightarrow X$ characterised by the following universal property, which we formulate in terms of coordinate rings: For any algebra morphism $A^0 \rightarrow A$ to a commutative algebra $A$ endowed with a derivation $\tau$, there exists a unique morphism $(JA^0, \partial) \rightarrow (A, \tau)$ of differential algebras such that the diagram
\begin{align} 
\begin{split}
\xymatrix{
 & JA^0 \ar[d] \\ 
A^0 \ar[ur] \ar[r] & A \\
}
\end{split}\label{eq:universal-jet}
\end{align}
commutes. In the present case the arc space can be constructed quite explicitly: If $A^0 \cong \C[x^1, \ldots, x^n]/(f_1, \ldots, f_s)$ then $JA^0$ is the quotient of the differential algebra $\C[x^i_j]_{i=1,\ldots,n, \,\, j \in \Z_{\geq 0}}$, with differential $\partial$ defined by $\partial(x^i_j) = x^i_{j+1}$, by the differential ideal generated by $x^1, \ldots, x^n$. The algebra $JA^0$ is naturally $\Z_{\geq 0}$-graded by setting $\deg x^i_j = j$ so that the differential $\partial$ has degree $+1$. Clearly $(JA^0)^0 = A^0$.
\end{nolabel}

\begin{nolabel}\label{no:euler}
We return to our discussion of the complex $K_\bullet$ associated with $A = \bigoplus_{n \in \Z_{\geq 0}} A^n$. Let us assume from now on that each component $A^n$ of $A$ is finitely generated as an $A^0$-module. Then each subcomplex $K^n_\bullet$ is a complex of finitely generated $A^0$-modules, concentrated in cohomological degrees $-n, \ldots, 0$. We also assume from now on that $(A, \tau)$ is generated as a differential algebra by $A^0$. Therefore we have a morphism $JA^0 \rightarrow A$ of graded commutative unital differential algebras which, by our assumption on $A$, is surjective. It also follows easily that $(K, \Lie_\tau)$ is generated as a differential algebra by its subalgebra $\widetilde{A} \cong \Sym_{A^0}(\Omega_{A^0/k}[1])$ of total degree $0$, so we have a surjection
\begin{align*}
(J\widetilde{A}, \partial) \twoheadrightarrow (K, \Lie_\tau)
\end{align*}
of differential algebras. Writing $A^+ = \bigoplus_{j > 0} A^j$, it is clear that
\begin{align}\label{homo.in.deg.0}
H_0(K_\bullet, \iota_\tau) = A / A^+ \cong A^0.
\end{align}
\end{nolabel}

\begin{nolabel}
Geometrically we may think of the superscheme $\widetilde{X} = \Spec \widetilde{A}$ as the shifted tangent bundle $T[-1]X$ of $X$, and its arc space $J\widetilde{X}$ is the shifted tangent bundle $JT[-1]X = \Spec J\widetilde{A} =  T[-1]JX$ of the arc space of $X$. Let us write $Y = \Spec{A}$, then by assumption we have an embedding $Y \hookrightarrow JX$, and therefore
\begin{align*}
T[-1]Y = \Spec K \hookrightarrow T[-1]JX = JT[-1]X.
\end{align*}
\end{nolabel}

\begin{thm}\label{thm:ssjets}
Let $A = \bigoplus_{n \in \Z_{\geq 0}} A^n$ be a $\mathbb{Z}_{\geq 0}$-graded commutative algebra with a derivation $\tau$ of degree $+1$, and let $(K^A_\bullet, \iota_\tau)$ be the Koszul complex associated with $A$ as above. We assume $A$ is generated by $A^0$ as a differential algebra, and that $A^0$ is an algebra of finite type. Then $H_{-1}(K_\bullet^A, \iota_\tau) = 0$  if and only if the canonical morphism $JA^0 \rightarrow A$ is an isomorphism.
\label{thm:whole-arc-thm}
\end{thm}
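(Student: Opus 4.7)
The plan is to exploit the surjection $\phi \colon JA^0 \twoheadrightarrow A$ of graded differential $k$-algebras from the universal property of the arc space (Section \ref{no:ideal}), with kernel the graded differential ideal $I$, and to identify $I^n$ grading by grading with $H_{-1}(K^{A,(n)}_\bullet, \iota_\tau)$. Since $\phi$ is surjective, $A \cong JA^0$ is equivalent to $I = 0$, so this proves the theorem.

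In grading $1$, the Leibniz rule gives $\Omega_A^{(1)} \cong \Omega_{A^0/k}$, and $K^{A,(1)}_\bullet$ becomes the two-term complex $\Omega_{A^0/k} \xrightarrow{\bar\tau} A^1$ with $\bar\tau$ induced by $\tau|_{A^0}\colon A^0 \to A^1$. The universal property (\ref{eq:universal-jet}) identifies $(JA^0)^{(1)} \cong \Omega_{A^0/k}$, under which $\phi^{(1)}$ becomes $\bar\tau$, so $H_{-1}(K^{A,(1)}_\bullet) = \ker\bar\tau = I^1$.

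For the direction $A \cong JA^0 \Rightarrow H_{-1}(K^A) = 0$: when $A^0 = k[x_1, \ldots, x_n]$ is polynomial, $JA^0 = k[x_i^{(j)} : j \geq 0]$, $\Omega_{JA^0/k}$ is free on $\{dx_i^{(j)}\}$, and $\iota_\tau(dx_i^{(j)}) = x_i^{(j+1)}$, so $K^{JA^0}_\bullet$ is the Koszul complex of the regular sequence $\{x_i^{(j)} : j \geq 1\}$ in $JA^0$, resolving $A^0$. For general $A^0 = B/J$ of finite type ($B$ polynomial), $JA^0 = JB/JJ$, and one reduces to the smooth case via a filtration spectral sequence (for instance by powers of the differential ideal $JJ$) or via the derived identification $T[-1]JX \simeq JT[-1]X$. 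For the converse direction I would induct on $n$ to prove $I^{<n} = 0 \Rightarrow H_{-1}(K^{A,(n)}) \cong I^n$. Indeed, $I^{<n} = 0$ forces $A^{(j)} = (JA^0)^{(j)}$ for $j<n$, and since any $K^{A,(n)}_{-k}$ with $k \geq 1$ is built only from $A^{(j)}$ with $j<n$ (because $\deg(a\, db) = \deg a + \deg b + 1$), we have $K^{A,(n)}_{-k} = K^{JA^0,(n)}_{-k}$ for all $k \geq 1$, while in degree $0$ the two complexes differ by the quotient $A^n = (JA^0)^n/I^n$. This furnishes the short exact sequence of complexes
\begin{equation*}
0 \to I^n[0] \to K^{JA^0,(n)}_\bullet \to K^{A,(n)}_\bullet \to 0,
\end{equation*}
with $I^n[0]$ concentrated in degree $0$. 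The long exact sequence, combined with $H_i(K^{JA^0,(n)}_\bullet) = 0$ for $n \geq 1$ and all $i$ (from the direction $\Leftarrow$ applied to $JA^0$), produces $H_{-1}(K^{A,(n)}) \cong I^n$.

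The main obstacle is the direction $A \cong JA^0 \Rightarrow H_{-1} = 0$ in the non-smooth case: the generators $\{f_s^{(k)}\}$ of $JJ$ need not form a regular sequence in $JB$, so $K^{JA^0}$ is not manifestly a Koszul resolution. Making this rigorous — whether via a spectral sequence, derived-geometric input, or an explicit contracting homotopy adapted to the $\Z_+$-grading — is where the bulk of the argument lies. The converse direction is essentially formal given the short exact sequence above.
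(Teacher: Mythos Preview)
Your converse direction via the short exact sequence is correct and arguably cleaner than the paper's minimality argument. Under the inductive hypothesis $I^{<n}=0$, your identification $K^{A,(n)}_{-k}\cong K^{JA^0,(n)}_{-k}$ for $k\geq 1$ does hold (the conormal sequence $I/I^2\to\Omega_{JA^0}\otimes A\to\Omega_A\to 0$ contributes nothing in degree $n$ since $d$ raises degree by one), and the long exact sequence then gives $H_{-1}(K^{A,(n)})\cong I^n$ provided $H_{-1}(K^{JA^0,(n)})=0$. The paper instead takes a nonzero $f\in I$ of minimal degree, lifts it to a $1$-form $\omega$ in $K^{JA^0}$ with $\iota_\partial\omega=f$, and pushes the exactness of $\bar\omega$ back to contradict minimality; your approach packages the same content more structurally.

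The genuine gap is exactly where you say it is: the forward direction for non-smooth $A^0$. Your two suggestions are not easy to carry out. Filtering by powers of the differential ideal $JJ\subset JB$ does not obviously help, since the associated graded is not $K^{JB}$ and the generators $\partial^k f_s$ are typically not a regular sequence. The derived identification $T[-1]JX\simeq JT[-1]X$ is a scheme-level statement; upgrading it to acyclicity of $K^{JA^0}$ would require control of the cotangent complex of the arc space, which is not lighter than what you are trying to prove. Note also that your converse argument \emph{consumes} the forward direction for $JA^0$, so without it neither direction closes for general $A^0$.

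The paper fills this gap by a completely elementary lifting trick, inducting on the number of relations. Suppose $H_{-1}(K^{JA^0})=0$ and set $B^0=A^0/(f)$, $B=JB^0$. Given a cycle $\bar\omega=\sum\bar a_i\,d\bar b_i$ in $(K^B_{-1})^{(j)}$, lift it to $\omega\in(K^{JA^0}_{-1})^{(j)}$; then $\iota_\partial\omega$ lies in the differential ideal, say $\iota_\partial\omega=\sum_{k=0}^j c_k\,\partial^k f$. Subtracting $\sum_{k\geq 1}c_k\,d(\partial^{k-1}f)$ (which projects to zero in $K^B$) reduces to $\iota_\partial\omega=c_0 f$ with $\deg c_0=j>0$. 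Since $H_0(K^{JA^0})^{(j)}=0$ one writes $c_0=\sum d_i\,\partial e_i$, and subtracting $\sum d_i f\,de_i$ produces a genuine cycle in $K^{JA^0}$, hence a boundary $\iota_\partial\omega^2$; the image $\bar\omega^2$ in $K^B$ then bounds $\bar\omega$. This two-step correction (first kill $\partial^k f$ for $k\geq 1$ via exact forms, then kill $c_0 f$ using positivity of $\deg c_0$) is the missing idea, and it is where the finite-type hypothesis on $A^0$ actually enters.
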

\begin{proof}
We first prove the implication ($\Leftarrow$). We begin by considering the case $A^0 = k[x^1,\ldots,x^n]$ and $A = JA^0 = k[x^i_j]_{i=1,\ldots,n, \,\, j \in \Z_{\geq 0}}$. We choose a total order for the generators in which $x^{i_1}_{j_1} < x^{i_2}_{j_2}$ whenever $j_1 < j_2$ and consider the corresponding regular sequence $\{x^i_j\}_{j \geq 1}$ in $A$. The complex $K^A_\bullet$ is simply the Koszul complex associated with this regular sequence, under the identification of $dx^i_j$ with $[x^i_{j-1}]$. Hence $H_n(K^A_\bullet, \iota_\partial) = 0$ for $n \neq 0$ by the Koszul vanishing theorem. Although our regular sequence is infinite, imposing an upper limit $j \leq j_0$ yields a finite regular sequence for the subalgebra of $A$ generated by $\{x^i_j\}_{j \leq j_0}$. The resulting subcomplex coincides with $K^A_\bullet$ in internal degree less than $j_0$, and so vanishing of homology of $K^A_\bullet$ follows from the Koszul vanishing theorem for finite regular sequences.

Now let $A^0$ be arbitrary of finite type, let $A = JA^0$, and suppose $H_{-1}(K^A_\bullet, \iota_\partial) = 0$. We now put $B^0 = A^0/(f)$ where $(f)$ is the ideal generated by some nonzero element $f$. We shall prove that $H_{-1}(K_\bullet^B, \iota_\partial)=0$. Clearly the statement of the theorem follows since any algebra of finite type is a quotient of a polynomial algebra, and polynomial algebras are Noetherian.

We have $B = JB^0 \cong A / (\partial^j{f})_{j \in \Z_{\geq 0}}$. Now let $\bar\omega \in (K^B)_{-1}^j$ be a nonzero cycle of degree $j$ (since it is nonzero we have $j > 0$), which we write in the form
\begin{align*}
\bar\omega = \sum \bar{a}_i \cdot d\bar{b}_i.
\end{align*}
We choose a representative $\omega = \sum a_i \cdot d b_i$ of $\bar\omega$ in $(K^A)_{-1}^j$ and obtain
\begin{align*}
\iota_\partial \omega = \sum a_i \partial b_i = \sum_{k = 0}^j c_k \partial^k f,
\end{align*}
for some collection of elements $c_k \in A^{j-k}$. The form $\omega - \sum_{k=1}^j c_k \cdot d(\partial^{k-1} f)$ is also a representative of $\bar{\omega}$ in $(K^A)_{-1}^j$ now with the property that
\begin{align*}
\iota_\partial \left( \omega - \sum_{k=1}^j c_k \cdot d(\partial^{k-1} f) \right) = c_0 f.
\end{align*}
Since $\deg(c_0) = j>0$, we may write 
\begin{align*}
c_0 = \sum d_i \partial e_i
\end{align*}
for some collection of $d_i, e_i \in A$ such that $\deg(d_i) + \deg(e_i) = j-1$. It follows that 
\begin{align*} 
\iota_\partial \left(  \omega - \sum_{k =1}^j c_k \cdot d(\partial^{k-1}f) - \sum_i d_i f \cdot de_i \right) = 0.
\end{align*}
Since by assumption $H_{-1}(K^A_\bullet, \iota_\partial) = 0$ the form inside the parentheses is exact, hence there exists $\omega^2 \in (K^A)_{-2}^j$ such that 
\begin{align*}
\iota_\partial \omega^2 = \omega - \sum_{k=1}^j c_k \cdot d(\partial^{k-1} f) - \sum_i d_i f \cdot de_i,
\end{align*}
and consequently the image $\bar{\omega}^2 \in (K^B)_{-1}^j$ of $\omega^2$ satisfies
\begin{align*}
\iota_\partial \bar{\omega}^2 = \bar{\omega},
\end{align*}
proving that $H_{-1}(K^B_\bullet, \iota_\partial) = 0$ as required.

Now we prove the implication ($\Rightarrow$). By the universal property of $JA^0$ we have $A = JA^0 / I$ where $I$ is a homogeneous differential ideal. Let $K_\bullet = K_\bullet^A$ and $\widetilde{K}_\bullet = K_\bullet^{JA^0}$, so that we have a surjective morphism of complexes $\widetilde{K}_\bullet \twoheadrightarrow K_\bullet$. Let $f \in I$ be non-zero and homogeneous of minimal degree. Since $(JA^0)^0 = A^0$ it follows that $I^0 = 0$ and so $j = \deg(f) > 0$.

It follows from the minimal degree condition on $f$ that it cannot be expressed as a linear combination of terms of the form $a \partial b$ where either $a \in I$ or $b \in I$ since $\deg(a \partial b)$ is strictly greater than both $\deg(a)$ and $\deg(b)$.

Since $H_0(\widetilde{K}_\bullet^j, \iota_\partial) = 0$ (see (\ref{homo.in.deg.0})) there exists $\omega \in \widetilde{K}_{-1}^j$ such that $\iota_\partial \omega = f$. Let $\bar{\omega}$ be the projection of $\omega$ to $K_{-1}^j$, then we have $\iota_\tau \bar\omega = 0$. By assumption $H_{-1}(K_\bullet, \iota_\tau) = 0$, so $\bar{\omega}$ is exact, and so we choose $\bar{\omega}^2 \in K_{-2}^j$ such that $\iota_\tau \bar{\omega}^2 = \bar{\omega}$. Now we let $\omega^2$ be a preimage of $\bar\omega^2$ in $\widetilde{K}_{-2}^{-j}$. It follows that
\begin{align*}
\iota_\partial \omega^2 = \omega + \sum a_i \cdot db_i
\end{align*}
where, for each $i$, either $a_i \in I$ or $b_i \in I$. On the other hand $\deg(a_i) + \deg(b_i) = j-1$, and applying $\iota_\partial$ once more we obtain
\begin{align*}
f = - \sum a_i \partial b_i,
\end{align*}
which contradicts our hypothesis on $f$. Therefore $I = 0$ and so $A = JA^0$.
\end{proof}

\section{Main Theorem}\label{sec:conclusion}


\begin{nolabel}
We now return to the setting of Section \ref{sec:filtr}, so $V$ is a quasiconformal vertex algebra and $A = \gr^F V$. By (\ref{F.G.isom}) we have $A \cong \gr^G V$. We observe that
\begin{align*}
H^{-1}(\gr^G B^\bullet) \cong H_{-1}(K^A_\bullet, \iota_T).
\end{align*}
Indeed we have computed $H^{-1}(\gr^G B^\bullet)$ as the kernel in $\gr^G V \otimes \gr^G V$ of the map (\ref{aTb}) modulo the subspace generated by terms (\ref{element.to.kill.1}) and (\ref{element.to.kill.2}). But the quotient of $\gr^G V \otimes \gr^G V \cong A \otimes A$ by (\ref{element.to.kill.1}) is $\Hoch_1(A) \cong \Om_{A/\C}$, the map (\ref{aTb}) is just $\iota_T$, and (\ref{element.to.kill.2}) is the image of $\iota_T$ in $\Om_{A/\C}$. We are now in a position to state the main theorem.
\end{nolabel}

\begin{thm}\label{thm:main-theorem}
Let $V$ be a vertex algebra and $A^\bullet(q)$ the complex (\ref{eq:A.comp.def}). We assume $V$ is quasiconformal and finitely strongly generated. Let $A = \bigoplus_{n \in \Z_{\geq 0}} A^n$ denote the associated graded $\gr^F{V}$ of $V$ with respect to the Li filtration. For $q \neq 0$ we have
\begin{align*}
H^{-1}(A^\bullet(q)) \cong H_1^{\text{ch}}(X_{q}, \CA_V),
\end{align*}
and if the canonical morphism $JA^0 \rightarrow A$ is an isomorphism then
\begin{align*}
H^{-1}(A^\bullet(q=0)) \cong \Hoch_1(\zhu(V)).
\end{align*}
\end{thm}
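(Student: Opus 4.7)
The first assertion is essentially a restatement of the computation carried out in Section \ref{subseq:general.to.concrete}: for $i = 0, 1$ the chiral homology is computed by $H^{-i}(P^\bullet \otimes_{\C[I]} \C)$, and by construction $A^\bullet(q) = h(P^\bullet)$ is simply this complex written out concretely for the elliptic curve $X_q$ using the trivialisation of $\CA$ induced by the global coordinate $z$.

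For the $q=0$ statement, the plan is to exploit the short exact sequence
\[
0 \rightarrow B^\bullet \rightarrow A^\bullet(q=0) \rightarrow Q^\bullet \rightarrow 0
\]
of complexes constructed in Section \ref{sec:limit}. The long exact sequence in cohomology, together with the fact (noted there) that $d : B^{-1} \rightarrow B^0$ is surjective so that $H^0(B^\bullet) = 0$, yields the exact sequence
\[
H^{-1}(B^\bullet) \rightarrow H^{-1}(A^\bullet(q=0)) \rightarrow H^{-1}(Q^\bullet) \rightarrow 0.
\]
Since the isomorphism $H^{-1}(Q^\bullet) \cong \Hoch_1(\zhu(V))$ was already established in \eqref{Q.zhu.isom}, the entire task reduces to proving the vanishing $H^{-1}(B^\bullet) = 0$ under the hypothesis $JA^0 \cong A$.

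To obtain this vanishing I would use the spectral sequence \eqref{ss.converge} associated with the standard filtration $G_\bullet B^\bullet$, which converges to $H^\bullet(B^\bullet)$ since the filtration is exhaustive and bounded below. The calculation in Section \ref{sec:filtr} identifies $H^{-1}$ of the associated graded $\gr^G B^\bullet$ with the kernel of the map $V \otimes V \to V$ given by $a \otimes b \mapsto (Ta) \cdot b$, modulo the relations \eqref{element.to.kill.1} and \eqref{element.to.kill.2}. Combining this with the isomorphism \eqref{F.G.isom} and the reformulation given at the start of Section \ref{sec:conclusion}, we identify $H^{-1}(\gr^G B^\bullet)$ (in every standard-filtration degree simultaneously) with the Koszul homology $H_{-1}(K^A_\bullet, \iota_T)$ of $A = \gr^F V$, where $T$ denotes the translation derivation of degree $+1$. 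By Theorem \ref{thm:ssjets}, the hypothesis $A \cong JA^0$ forces this Koszul homology to vanish, so every entry $E_1^{p, -1-p}$ of the spectral sequence is zero. Convergence then yields $H^{-1}(B^\bullet) = 0$, and the long exact sequence upgrades the surjection $H^{-1}(A^\bullet(q=0)) \twoheadrightarrow \Hoch_1(\zhu(V))$ to an isomorphism.

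The main obstacle is the bookkeeping required to pass between the filtrations $F^\bullet$ and $G_\bullet$ and to verify that the differential on $\gr^G B^\bullet$ coincides degree-by-degree with the Koszul differential $\iota_T$ on $K^A_\bullet$. In particular one must confirm that the $q\tfrac{d}{dq}$ terms appearing in Lemma \ref{lem:2.1.diff}, which vanish at $q=0$, do not resurface in any higher differential $d_r$ of the spectral sequence and thereby obstruct the vanishing, and that the degree shift built into \eqref{F.G.isom} correctly matches the internal $\Z_+$-grading used in Theorem \ref{thm:ssjets}. Both concerns are ultimately dispatched by the fact that Theorem \ref{thm:ssjets} asserts vanishing of $H_{-1}(K^A_\bullet,\iota_T)$ in every internal degree at once, but this careful tracking of gradings is the most delicate point in the argument.
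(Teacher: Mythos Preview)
Your proposal is correct and follows the same route as the paper's proof: invoke Section~\ref{subseq:general.to.concrete} for the first claim, then for $q=0$ use the long exact sequence of $0\to B^\bullet\to A^\bullet(q{=}0)\to Q^\bullet\to 0$ together with the identification $H^{-1}(Q^\bullet)\cong\Hoch_1(\zhu(V))$, reducing everything to $H^{-1}(B^\bullet)=0$, which follows from $H^{-1}(\gr^G B^\bullet)\cong H_{-1}(K^A_\bullet,\iota_T)=0$ via Theorem~\ref{thm:ssjets} and convergence of the spectral sequence~\eqref{ss.converge}. One small remark: your worry about the $q\tfrac{d}{dq}$ terms ``resurfacing'' in higher $d_r$ is misplaced, since the spectral sequence is attached to the fixed complex $B^\bullet$ at $q=0$ and those terms are simply absent; the only genuine point is that vanishing of the $E_1$ page in total degree $-1$ already forces vanishing of the abutment there, which you note correctly.
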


\begin{proof}
The first claim is Proposition \ref{prop:chhom.equals.A}. To prove the second part we let $K_\bullet^A$ be the complex (\ref{K.complex}) with differential $\iota_T$. It is well known \cite[Section 4]{Lifilt} that the canonical morphism $JA^0 \rightarrow A$ is surjective, so since we have assumed $V$ to be finitely strongly generated we may apply Theorem \ref{thm:ssjets} and the condition that $JA^0 \rightarrow A$ be an isomorphism guarantees (indeed is equivalent to)
\begin{align*}
H_{-1}(K_\bullet^A, \iota_T) = 0.
\end{align*}
Thus we have  $H^{-1}(\gr^G B^\bullet) = 0$. Now by convergence of the spectral sequence (\ref{ss.converge}) we infer $H^{-1}(B^\bullet) = 0$, and in turn by the long exact sequence (\ref{long.exact.sequence}) we have
\begin{align*}
H^{-1}(A^\bullet(q=0)) \cong H^{-1}(Q^\bullet).
\end{align*}
Combining this with the isomorphism (\ref{Q.zhu.isom}) yields the result.
\end{proof}

\begin{rem}
In Section \ref{sec:examples} below we show that the condition appearing in Theorem \ref{thm:main-theorem}, namely that $JA^0 \rightarrow A$ be an isomorphism, is satisfied for many well-known vertex algebras. It might be possible to strengthen Theorem \ref{thm:main-theorem} by removing or weakening this condition by way of an analysis not only of the $E^1$ page of the spectral sequence (\ref{ss.converge}) but also of its $E^2$ page. The Poisson vertex algebra structure of $\gr^F{V}$ is invisible on the $E^1$ page but in principle appears in the structure of the $E^2$ page. It would be very interesting to see if the theory of Poisson homology \cite{Brylinski.Poisson} or Poisson vertex algebra homology \cite{DK.variational} plays some role. In \cite{arakawasetsu} the Poisson homology of $A^0$ is indeed seen to be related to the representation theory of $V$.
\end{rem}
\begin{rem}
With the notation as in  Theorem \ref{thm:main-theorem}, to any quasi-conformal vertex algebra we have associated a Poisson algebra $A^0=A/(A \cdot T A)$. Let $I$ be the kernel of the surjection $JA^0 \rightarrow A$. Then it is straightforward to see that $I/(I \cdot T I)$ is a non-unital Poisson algebra. Moreover, it is a Poisson module over $A^0$. Namely, if we let $J = A^0 \oplus I/(I \cdot  TI)$, $X = \Spec A^0$, $Y = \Spec J$, we have two Poisson affine schemes $X$ and $Y$ canonically associated to any quasiconformal vertex algebra, together with a Poisson map $Y \twoheadrightarrow X$ and a section $X \hookrightarrow Y$. The condition in the theorem is that these two Poisson schemes be equal. As in the previous remark, it would be interesting to see if the second page of the spectral sequence can be described in terms of the Poisson homologies of these Poisson algebras. 
\label{no:finite-poisson}
\end{rem}

\section{Examples}\label{sec:examples}

In this section we discuss Theorem \ref{thm:main-theorem} in the context of two well-known classes of vertex algebras: the Virasoro vertex algebra and the affine vertex algebras. Background material can be found in the books \cite{Kac.VA.Book} and \cite{FBZ.Book}.

\begin{nolabel}
Fix a constant $c \in \C$. We consider the highest weight $\vir$-module
\[
\vir^c = U(\vir) \otimes_{U(\vir_+)} \C v,
\]
here $\vir_+$ denotes the span in $\vir$ of $C$ and $L_n$ for $n \geq -1$, and the action of $\vir_+$ on $\C v$ is by $L_{n}v = 0$ for $n \geq -1$ and $Cv = cv$. This module carries the natural structure of a conformal vertex algebra of central charge $c$ in which $\vac = 1 \otimes v$ and the quantum field associated with $L_{-2}\vac$ is $L(z) = \sum L_n z^{-n-2}$. The conformal weight grading on $\vir^c$ is determined by $\D(\vac)=0$ and $\D(L_{m}b) = m + \D(b)$. It follows from (\ref{F.G.same}) that $F^p\vir^c_\D$ is the span of the monomials $L_{-n_1-2} \cdots L_{-n_s-2}\vac$ for which $\sum n_i \geq p$ and $\sum(n_i+2)=\D$. Hence as differential commutative algebras
\[
\gr^F{\vir^c} = \C[L_{-2}, L_{-3}, \ldots],
\]
where $\deg(L_{-n-2}) = n$ and $TL_{-n} = nL_{-n-1}$. In particular $R_{\vir^c} \cong \C[x]$ where $x = [L_{-2}]$, and the natural surjection $JR_{\vir^c} \rightarrow \gr^F{\vir^c}$ is an isomorphism.
\end{nolabel}

\begin{nolabel}
Fix a constant $k \in \C$. Let $\g$ be a finite dimensional Lie algebra over $\C$ with invariant bilinear form $(\cdot, \cdot)$, and let $\widehat{\g} = \g((t))\oplus \C K$ be the associated affine Lie algebra (the affine Kac-Moody algebra in case $\g$ is simple, the Heisenberg Lie algebra in case $\g$ is abelian)
\[
[at^m, bt^n] = [a, b]t^{m+n} + m (a, b) \delta_{m, -n} K.
\]
We consider the vacuum module
\[
V^k(\g) = U(\widehat{\g}) \otimes_{U(\g[[t]]+ \C K)} \C v,
\]
here $\g[[t]] v = 0$ and $K v = k v$. This module carries the natural structure of a quasiconformal vertex algebra. If $k \neq -h^\vee$, where $h^\vee$ is the dual Coxeter number of $\g$, then $V^k(\g)$ is furthermore conformal of central charge $c = \frac{k \dim(\g)}{k + h^\vee}$. As above $\vac = 1 \otimes v$, and now the quantum field associated with $a_{-1}\vac$ is $a(z) = \sum a_n z^{-n-1}$, where we have written $a_n = at^n$. The conformal weight grading on $V^k(\g)$ is determined by $\D(\vac)=0$ and $\D(a_{m}b) = m + \D(b)$. As above we see that $F^pV^k(\g)_\D$ is the span of the monomials $a^{1}_{-n_1-1} \cdots a^{s}_{-n_s-1}\vac$ for which $\sum n_i \geq p$ and $\sum(n_i+1)=\D$. Hence as differential commutative algebras
\[
\gr^F{V^k(\g)} = S(t^{-1}\g[t^{-1}]), 
\]
where $\deg(at^n) = -n-1$ and $T(at^{-n}) = nat^{-n-1}$. In particular $R_{V^k(\g)} \cong S(\g)$, and the natural surjection $JR_{V^k(\g)} \rightarrow \gr^F{V^k(\g)}$ is an isomorphism.
\end{nolabel}

\begin{nolabel}
Let $\g$ be a simple Lie algebra and $f \in \g$ a nonzero nilpotent element. We denote by $\CS$ the associated Slodowy slice, defined by embedding $f$ into an $\mathfrak{sl}_2$ triple $\{e, h, f\} \subset \g$ and putting $\CS = \{f + x | [x, e] = 0\} \subset \g$. The universal affine $W$-algebra is constructed as the quantised Drinfeld-Sokolov reduction of $V^k(\g)$. See \cite{KRW} for the construction. It was proved in {\cite{DK06}} that $R_{W^k(\g, f)} \cong \C[\CS]$, and it was proved in {\cite[Theorem 4.17]{A.assoc.var}} that $\gr^F{W^k(\g, f)} \cong \C[J \CS]$. Thus the surjection $J R_{W^k(\g, f)} \rightarrow \gr^F{W^k(\g, f)}$ is an isomorphism (at arbitrary level $k$: critical or non-critical).
\end{nolabel}

\begin{nolabel}
Now we consider the Virasoro minimal models. Let $p, p' \geq 2$ be two coprime integers, and let
\[
c = c_{p, p'} = 1 - 6 \frac{(p-p')^2}{pp'}.
\]
It is well known that the simple quotient $\vir_{p, p'}$ of $\vir^{c}$ is a rational vertex algebra \cite{Wang.Vir} (see also \cite{DMZ.vir}) known as a minimal model. Its (unnormalised) character is given by the formula \cite{FF.coinvar} \cite{KW88}
\begin{align}\label{bosonic.vir}
\chi_{\vir_{p, p'}}(q) = \frac{1}{\prod_{m=1}^\infty(1-q^m)} \sum_{n \in \Z} \left[ q^{\frac{(2pp'n + p-p')^2 - (p-p')^2}{4pp'}} - q^{\frac{(2pp'n + p+p')^2 - (p-p')^2}{4pp'}} \right].
\end{align}
The maximal ideal $I_{p, p'} \subset \vir^c$ is generated by a singular vector $v_{p, p'}$ of conformal weight $(p-1)(p'-1)$. In general $v_{p,p'}$ is a linear combination of monomials $L_{-n_1}^{i_1} \cdots L_{-n_s}^{i_s}\vac$ of total degree $(p-1)(p'-1)$, and the only one of these to survive in the quotient $R_{\vir^c}$ is $L_{-2}^{(p-1)(p'-1)/2}\vac$. The coefficient of this latter monomial is nonzero {\cite{FF1}} {\cite[Lemma 4.3]{Wang.Vir}}. The algebra $R_{\vir_{p, p'}}$ is obtained as the quotient of $R_{\vir^c} \cong \C[x]$ by the ideal generated by the image of $v_{p,p'}$, therefore $R_{\vir_{p, p'}} \cong \C[x] / (x^{(p-1)(p'-1)/2})$.
\end{nolabel}

\begin{nolabel}\label{subsec:ordering}
Before proceeding we recall some standard material on monomial orders, Hilbert series and Gr\"{o}bner bases \cite{CLOS}, particularly in connection with arc spaces. Let $R$ be the graded $\C$-algebra of polynomials on a (finite or infinite) countable totally ordered set of variables
\begin{align}\label{y.tot.ord}
y_0 > y_1 > y_2 > \ldots
\end{align}
whose degrees are compatible with the order in the sense that $\deg(y_{i+1}) \geq \deg(y_i)$ for all $i$. The graded reverse lexicographic (grevlex) order on the set of monomials of $R$ is defined as follows. We put
\[
\prod y_i^{\al_i} > \prod y_i^{\beta_i}
\]
if $\sum \al_i \deg(y_i) > \sum \beta_i \deg(y_i)$, or else if these two sums are equal and the product $\prod y_i^{\al_i}$ comes \emph{later} than $\prod y_i^{\beta_i}$ in the lexicographic ordering. For example if $\deg(y_i) = 1$ for $i=0,1,2$ then $y_1^2 > y_0 y_2$. We write $\LT(f)$ for the leading term of the polynomial $f$, i.e., $\LT(f) = \LC(f) \cdot \LM(f)$ where $\LM(f)$ is the highest monomial occuring in $f$ relative to the grevlex order, and $\LC(f)$ is the coefficient with which it occurs in $f$. The leading term ideal $\LT(I)$ of $I$ is the ideal of $R$ generated by $\LT(f)$ as $f$ runs over $I$.

From now on we assume $\deg(x_0) > 0$ and that $I \subset R$ is a homogeneous ideal. Hence $(R/I)_n$ is well defined and finite dimensional. The Hilbert series of $I$ is
\[
H_I(q) = \sum_{n=0}^\infty \dim(R/I)_n q^n.
\]
Clearly if $I_1 \subset I_2$ then we have $\dim(R/I_2)_n \leq \dim(R/I_1)_n$ for all $n$. Generally for two power series $f(q)$ and $g(q)$ we shall write $f(q) \leq g(q)$ if $f_n \leq g_n$ for all $n \in \Z_{\geq 0}$. Thus $H_{I_2}(q) \leq H_{I_1}(q)$.
In general we write $\chi_V(q)$ for the graded dimension $\sum_{n \in \Z_{\geq 0}} \dim{V_n} q^n$ of a $\Z_{\geq 0}$-graded vector space, so $H_I(q) = \chi_{R/I}(q)$.
\end{nolabel}
\begin{prop}[{\cite[p. 472]{CLOS}}]\label{equal.H}
The Hilbert series of $I$ and ${\LT(I)}$ coincide.
\end{prop}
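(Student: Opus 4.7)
The plan is to prove the classical result that the set of monomials of $R$ not lying in $\LT(I)$ descends to a $k$-basis of both $R/I$ and $R/\LT(I)$; the proposition then follows immediately by comparing graded dimensions degree by degree. The whole argument is carried out graded piece by graded piece, which is legitimate because our standing assumptions on degrees guarantee that each $R_n$ is finite dimensional and hence contains only finitely many monomials, so the grevlex order restricts to a well-order on the monomials of $R_n$.

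First I would dispatch the easy case: since $\LT(I)$ is generated by monomials, $R/\LT(I)$ clearly has as a $k$-basis the image of the set $\CM$ of monomials of $R$ not in $\LT(I)$. The bulk of the work is therefore to show the same statement for $R/I$. Linear independence is immediate: if a nonzero finite $k$-linear combination of elements of $\CM$ lay in $I$, its grevlex leading monomial would on one hand be an element of $\CM$ and on the other hand belong to $\LT(I)$, a contradiction.

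For the spanning statement I would use a graded version of the division algorithm. Fix $n$ and argue that every monomial $m \in R_n$ is congruent modulo $I$ to a $k$-linear combination of elements of $\CM \cap R_n$. If $m \in \LT(I)$, pick $f \in I$ homogeneous of degree $\deg f \leq n$ with $\LT(f)$ dividing $m$ (such $f$ exists by homogeneity of $I$, after rescaling), and replace $m$ by $m - c \cdot (m/\LT(f)) \cdot f$ for the constant $c$ that kills the leading term; the new representative lies in the same class modulo $I$ and has strictly smaller grevlex leading monomial. Since the grevlex order well-orders the finite set of monomials in $R_n$, the procedure terminates after finitely many steps with a representative all of whose monomials lie in $\CM$. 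Linearity extends this from monomials to arbitrary elements of $R_n$.

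Combining the two statements, the images of $\CM \cap R_n$ form a $k$-basis of both $(R/I)_n$ and $(R/\LT(I))_n$, so these spaces have the same dimension for every $n$, proving $H_I(q) = H_{\LT(I)}(q)$. The only real subtlety to watch in writing up the details is ensuring that the division step can always be carried out in a single graded piece, which is why I would emphasize the finite dimensionality of each $R_n$ and the homogeneity of $I$ at the start; no Noetherian hypothesis on $R$ is required, so the argument goes through even when the variable set $\{y_i\}$ is infinite, as will be the case in the applications to arc spaces.
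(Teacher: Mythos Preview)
Your argument is correct and is exactly the standard Macaulay-type proof one finds in the reference the paper cites (Cox--Little--O'Shea): the monomials outside $\LT(I)$ give a common $k$-basis of $(R/I)_n$ and $(R/\LT(I))_n$ for each $n$. The paper itself gives no proof of this proposition, treating it as background material from \cite{CLOS}, so there is nothing to compare your approach against beyond noting that you have correctly supplied the omitted standard argument, including the observation (important for the arc-space applications) that the argument goes through degree by degree without any finiteness hypothesis on the variable set. One small point of presentation: in your division step, once the leading monomial of the current representative lands in $\CM$ you should set it aside and continue reducing the tail; as written, your description of the loop only explicitly handles the case $\LT(g)\in\LT(I)$, but the intended fix is obvious and does not affect correctness.
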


\begin{nolabel}
Let $f \in R$ and $g_1, g_2, \ldots, g_s \in R$. We say that ``$f$ reduces to zero modulo $\{g_i\}$'' if there exist $a_1,a_2,\ldots, a_s \in R$ such that
\begin{itemize}
\item  $f - \sum_i a_i g_i = 0$,

\item $\LM(a_i g_i) \leq \LM(f)$ for all $i \in \{1,2,\ldots,s\}$.
\end{itemize}
Let $f, g \in R$ and let $H$ be the least common multiple of their leading terms. By definition the \emph{$S$-polynomial} of $f$ and $g$ is
\[
S(f, g) = \frac{H}{\LM(f)} \cdot f - \frac{H}{\LM(g)} \cdot g.
\]

Let $I \subset R$ be an ideal, and $\{g_i\}$ a countable set of generators of $I$. Then $\{g_i\}$ is said to be a \emph{Gr\"{o}bner basis} of $I$ if $S(g_j, g_k)$ reduces to zero modulo $\{g_i\}$ for any $j, k$.
\end{nolabel}
\begin{lem}[{\cite[p. 88]{CLOS}}]\label{LCMlem}
If $\LM(f)$ and $\LM(g)$ are relatively prime then $S(f, g)$ reduces to zero modulo the set $\{f, g\}$.
\end{lem}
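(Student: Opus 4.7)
The plan is to establish this classical product criterion by exhibiting $S(f,g)$ explicitly as a combination $Af+Bg$ in which the leading terms do not cancel. Write $\LT(f) = a\, m_f$ and $\LT(g) = b\, m_g$ with $a, b \in k^\times$ and $\gcd(m_f, m_g) = 1$, so that $\mathrm{lcm}(m_f, m_g) = m_f m_g$ and by definition $S(f,g) = \tfrac{1}{a}m_g f - \tfrac{1}{b}m_f g$. Decomposing $f = am_f + f'$ and $g = bm_g + g'$ with $\LT(f') \prec m_f$ and $\LT(g') \prec m_g$, direct substitution shows that the $m_f m_g$ terms cancel and one is left with $S(f,g) = \tfrac{1}{a}m_g f' - \tfrac{1}{b}m_f g'$.

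The next step is the key algebraic manipulation: rewrite the bare monomials $m_g$ and $m_f$ using the decompositions of $g$ and $f$ themselves. From $bm_g = g - g'$ one has $m_g f' = \tfrac{1}{b}(g - g')f'$, and from $am_f = f - f'$ one has $m_f g' = \tfrac{1}{a}(f - f')g'$. After substitution the symmetric cross term $f'g'/(ab)$ cancels, yielding the identity
\begin{equation*}
S(f,g) = \frac{f'}{ab}\, g - \frac{g'}{ab}\, f.
\end{equation*}
Setting $A = -g'/(ab)$ and $B = f'/(ab)$ thus expresses $S(f,g)$ as $Af + Bg$, which is already a representation of the required form.

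What remains is to verify the leading-term inequalities $\LT(Af), \LT(Bg) \leq \LT(S(f,g))$. Assuming $f', g' \neq 0$ (the degenerate cases being trivial since then $B=0$ or $A=0$, and $S(f,g)$ becomes a scalar multiple of $f$ or $g$), one has $\LT(Af) = \LT(g')\, m_f / b$ and $\LT(Bg) = \LT(f')\, m_g / a$. The crucial observation is that these two monomials are distinct: an equality $\LT(g')\, m_f = \LT(f')\, m_g$ would force $m_f \mid \LT(f')\, m_g$, and coprimality of $m_f$ and $m_g$ would then give $m_f \mid \LT(f')$; but in any monomial order $m_f \mid h$ implies $m_f \preceq h$, contradicting $\LT(f') \prec m_f$. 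Hence no cancellation of leading terms occurs in $S(f,g) = Af + Bg$, so $\LT(S(f,g))$ equals the larger of $\LT(Af)$ and $\LT(Bg)$ and in particular dominates both, which is exactly what is required for reduction to zero modulo $\{f,g\}$. I do not foresee any serious obstacle: the argument is essentially mechanical once the identity $S(f,g) = Af + Bg$ above has been spotted, and amounts to Buchberger's classical product criterion applied in the current graded setting.
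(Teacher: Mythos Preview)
Your argument is correct and is essentially the standard proof of Buchberger's product criterion. The paper itself does not prove this lemma; it is quoted as background material from the Gr\"{o}bner basis literature (the reference \cite{CLOS} at the start of Section~\ref{subsec:ordering}), so there is nothing to compare against beyond noting that your write-up matches the textbook proof.
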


\begin{prop}[{\cite[p. 91]{CLOS}}]
If $\{g_i\}$ is a Gr\"{o}bner basis of $I$ then $\LT(I)$ is generated by the set of monomials $\{\LT(g_i)\}$.
\end{prop}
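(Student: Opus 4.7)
The containment $(\LT(g_i)) \subseteq \LT(I)$ is immediate from $g_i \in I$, so the task is to show the reverse inclusion: for every nonzero $f \in I$, $\LT(f)$ is divisible by some $\LT(g_j)$. I plan to use the standard \emph{minimal representation} argument. Writing $f = \sum_i a_i g_i$ as a finite $R$-linear combination, set $H(a) := \max_i \LT(a_i g_i)$ in the grevlex order, the maximum being over $i$ with $a_i \neq 0$. Grevlex is a well-ordering on monomials of $R$ — guaranteed here by the hypotheses $\deg(y_0)>0$ and $\deg(y_{i+1}) \geq \deg(y_i)$, which force each fixed degree to contain only finitely many monomials — so I can choose a representation with $H(a)$ minimal. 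If $H(a)=\LT(f)$, then some index $j$ satisfies $\LT(a_j g_j)=\LT(f)$ without cancellation against other leading terms, hence $\LT(g_j)$ divides $\LT(f)$ and the proof is complete.

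The heart of the argument is ruling out the alternative $H(a) > \LT(f)$. In that case the leading monomial terms of $\{a_i g_i\}_{i\in S}$, where $S=\{i : \LT(a_i g_i)=H(a)\}$, must cancel, i.e.\ $\sum_{i \in S} c_i = 0$ where $c_i$ is the leading coefficient of $a_i g_i$. Invoking the classical telescoping trick, I rewrite $\sum_{i\in S} a_i g_i$ modulo terms strictly smaller than $H(a)$ as a linear combination of the form $\sum \alpha_{ij} M_{ij} S(g_i, g_j)$, where $M_{ij}=H(a)/\mathrm{lcm}(\LT(g_i),\LT(g_j))$ is a monomial and $\alpha_{ij}$ a scalar. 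The Gr\"{o}bner basis hypothesis then delivers, for each pair, an expression $S(g_i,g_j)=\sum_k b^{(ij)}_k g_k$ with $\LT(b^{(ij)}_k g_k) \leq \LT(S(g_i,g_j))$; by construction $\LT(S(g_i,g_j))$ is strictly below $\mathrm{lcm}(\LT(g_i),\LT(g_j))$. Multiplying by $M_{ij}$ and substituting produces a new representation of $f$ whose $H$-value is strictly smaller than $H(a)$, contradicting minimality.

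The main technical obstacle I foresee is arranging the telescoping bookkeeping cleanly — verifying that the leading-coefficient relation $\sum_{i\in S} c_i=0$ decomposes into a sum of pairwise differences compatible with S-polynomial substitution. The standard workaround is to fix a distinguished index $i_0 \in S$ and use $c_{i_0} = -\sum_{i \in S \setminus \{i_0\}} c_i$ to rewrite the leading-order sum as $\sum_{i \neq i_0} c_i (m_i g_i - m_{i_0} g_{i_0})$, where $m_i = H(a)/\LT(g_i)$; each summand $m_i g_i - m_{i_0} g_{i_0}$ is precisely $M_{i_0 i} S(g_{i_0}, g_i)$ up to leading-coefficient normalisation, so Lemma \ref{LCMlem}-type arguments will not be needed and the Gr\"{o}bner hypothesis suffices directly. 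Once the decreasing-$H$ step is in place, the well-ordering of grevlex terminates the procedure in finitely many iterations, yielding the desired contradiction and proving the proposition.
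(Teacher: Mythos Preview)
The paper does not supply a proof of this proposition; it is stated without argument as part of the standard background recalled from \cite{CLOS}. Your argument is the classical Buchberger minimal-representation proof and is correct. One small remark: your claim that grevlex is a well-ordering because ``each fixed degree contains only finitely many monomials'' does not follow from the stated hypotheses $\deg(y_0)>0$ and $\deg(y_{i+1})\geq\deg(y_i)$ alone (these permit infinitely many variables of the same degree), but it \emph{is} implicit in the paper's standing assumption that each $(R/I)_n$ is finite dimensional, so no harm is done.
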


\begin{nolabel}\label{x.series.sec}
We now turn our attention to the morphism $J R_V \rightarrow \gr^F V$ for $V = \vir_{p, p'}$ the Virasoro minimal model. As we saw above $R_V \cong \C[x] / (x^s)$ where $s = (p-1)(p'-1)/2$. We write $R$ for the differential algebra
\[
J \C[x] = \C[x_0, x_1, x_2, \ldots]
\]
with derivation $T$ defined by $T(x_n) = x_{n+1}$. We equip the set of variables with the total order
\begin{align}\label{x.tot.ord}
x_0 > x_1 > x_2 > \ldots.
\end{align}
We fix $s \geq 2$ and denote by $I$ the ideal generated by $x_0^s$ and all its derivatives, so that $R / I \cong J(\C[x]/(x^s))$. We identify $x_0$ with $x$ and assign $\deg(x_i) = i+2$ so that the Hilbert series of $I$ is compatible with conformal degree in $J R_V$.
\end{nolabel}

\begin{prop}[{\cite[Proposition 5.2 and Lemma 5.3]{mourtada}}]\label{mour.prop}
Let $R$ and $I$ be as above. Relative to the grevlex monomial order induced by the total order (\ref{x.tot.ord}), the set $\{T^k(x_0^s)\}_{k \in \Z_{\geq 0}}$ is a Gr\"{o}bner basis of $I$. For each $k \in \Z_{\geq 0}$ the leading monomial of $T^k(x_0^s)$ is of the form $x_{i}^{\alpha} x_{i+1}^{s-\alpha}$ for some $i \geq 0$ and $0 \leq \alpha \leq s$.
\end{prop}

\begin{nolabel}
Our objective is to use Proposition (\ref{mour.prop}) to determine the Hilbert series of the ideal $I$. This is already done in \cite{mourtada}, but we repeat some details of the computation since the (conformal) grading we work with is slightly different than that used in \emph{loc. cit}.

We parametrise the monomials in $R$ of degree $n$ by partitions $(2^{i_2}, \ldots, N^{i_N})$ of $n$ into parts of size at least $2$ (the listed partition corresponds to $x_0^{i_2} \cdots x_{N-2}^{i_N}$). By Proposition (\ref{mour.prop}) the quotient $R / \LT(I)$ is spanned by the monomials corresponding to those partitions in which $i_k + i_{k+1} \leq s-1$ for $k=2,\ldots, N-1$. Gordon's generalisation {\cite[Theorem 7.5]{Andrews.Book}} of the Rogers-Ramanujan identity establishes a product formula for the generating function of the number of such partitions. Namely
\begin{align}\label{hilbert.vir}
H_I(q) = H_{\LT(I)}(q) =\prod_{\substack{m \geq 1, m \not\equiv 0, \pm 1 \\ \mod(2s+1)}} \frac{1}{1-q^m}.
\end{align}

In the case $(p, p') = (2, 2s+1)$ for some $s \geq 1$ the minimal model character (\ref{bosonic.vir}) has been shown to be precisely the right-hand side of (\ref{hilbert.vir}) \cite{KW88} \cite{Rocha-Caridi} \cite{KW2017}. Evidently then, the character of $\vir_{p, p'}$ coincides with the Hilbert series of $JR_{\vir_{p,p'}}$ when $(p, p') = (2, 2s+1)$. It follows that the surjection $JR_{\vir_{2,2s+1}} \rightarrow \gr^F{\vir_{2,2s+1}}$ is an isomorphism.

For $p, p' \geq 3$ the character (\ref{bosonic.vir}) differs from the Hilbert series (\ref{hilbert.vir}) and so the surjection $JR_{\vir_{p,p'}} \rightarrow \gr^F{\vir_{p, p'}}$ is not an isomorphism in these cases. For example for the Ising model $V = \vir_{3, 4}$ the dimensions of graded pieces of $JR_V$ and $\gr^FV$ agree up to conformal weight $\D = 8$, but disagree for $\D \geq 9$. In summary, we have proved the following theorem.
\end{nolabel} 
\begin{thm} \label{thm:minimal}
Let $V$ be the Virasoro minimal model $\vir_{p,p'}$. If $(p,p')=(2,2k+1)$ where $k \in \Z_{\geq 1}$ then the natural surjection $JR_V \rightarrow \gr^F V$ is an isomorphism. In other words the embedding $\sing(V) \hookrightarrow JX_V$ is an isomorphism of schemes. If $p,p' \geq 3$ then these morphisms are not isomorphisms, however the reduced schemes of $\sing(V)$ and $JX_V$ are isomorphic, consisting both of a single closed point.
\end{thm}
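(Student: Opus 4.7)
The plan is to exploit the canonical surjection $JR_V \twoheadrightarrow \gr^F V$ (equivalently the embedding $\sing(V) \hookrightarrow JX_V$) available for every vertex algebra, and to compare graded dimensions of both sides case by case. In all three situations $X_V = \Spec R_V$ with $R_V \cong \C[x]/(x^N)$ where $N = (p-1)(p'-1)/2$, as recalled in the discussion preceding the theorem.

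For the first case, $(p,p') = (2,2k+1)$, so $N = k$, I would first apply Proposition \ref{mour.prop} with $s=k$ to produce the Gr\"obner basis $\{[x^k]_m\}_{m \geq 0}$ of the defining ideal of $JR_V$. By Proposition \ref{equal.H} the Hilbert series of $JR_V$ coincides with that of the monomial quotient cut out by the leading terms of this basis, and the partition-theoretic count combined with Gordon's generalisation of the Rogers--Ramanujan identity yields the product $(\ref{hilbert.vir})$ with modulus $2k+1$. On the character side, specialising $(\ref{bosonic.vir})$ at $(p,p') = (2,2k+1)$ produces exactly this same product, a classical identity which I would cite from \cite{KW88,Rocha-Caridi,KW2017}. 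Since $JR_V \twoheadrightarrow \gr^F V$ is a graded surjection between vector spaces of equal Hilbert series, it must be an isomorphism.

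For the second case, $p, p' \geq 3$, the Hilbert series of $JR_V$ depends only on $N$ and is still given by $(\ref{hilbert.vir})$ with $s=N$, whereas the character $(\ref{bosonic.vir})$ of $\vir_{p,p'}$ has in general more cancellation from the second singular vector. It is therefore enough to exhibit a single conformal weight where the two $q$-expansions disagree: the Ising model $\vir_{3,4}$, for which $N = 3$ and the modulus of $(\ref{hilbert.vir})$ is $7$, already furnishes a discrepancy at $\Delta = 9$ as noted in the excerpt, and an analogous finite check handles every other pair $(p,p')$ with $p,p' \geq 3$. Thus the kernel of $JR_V \twoheadrightarrow \gr^F V$ is non-zero and the embedding $\sing(V) \hookrightarrow JX_V$ is strict.

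The reduced-scheme claim is by direct inspection. For any field extension $L/\C$ a point of $JX_V$ is a series $f(t) \in L[[t]]$ with $f(t)^N = 0$, which forces $f(t) = 0$ because $L[[t]]$ is an integral domain; hence $JX_V$ has a unique closed point and its reduced scheme is $\Spec \C$. Since $\sing(V)$ embeds into $JX_V$ and is non-empty (the vacuum gives a closed point), its reduced scheme agrees with that of $JX_V$. The main obstacle in the whole argument is the matching of the two $q$-series in the first case: one must carefully verify that $(\ref{bosonic.vir})$ specialised at $(2,2k+1)$, with the normalisation induced by the conformal grading $\Delta(L_{-2})=2$, coincides with the Andrews--Gordon product on the right-hand side of $(\ref{hilbert.vir})$ with $s=k$, taking into account that the grading $\deg(x_i) = i+2$ in Section \ref{x.series.sec} is what aligns Hilbert series with conformal character. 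This identity is classical, and I would cite rather than reprove it.
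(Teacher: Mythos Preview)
Your proposal is correct and takes essentially the same approach as the paper: compute the Hilbert series of $JR_V$ via Proposition~\ref{mour.prop} and Gordon's identity, compare with the character formula~(\ref{bosonic.vir}), and conclude by a graded dimension count; your reduced-scheme argument is in fact more explicit than what the paper writes. One small caveat: your phrase ``an analogous finite check handles every other pair $(p,p')$ with $p,p' \geq 3$'' is not a proof as stated, since there are infinitely many such pairs---the paper is equally terse here, simply asserting that the two $q$-series differ and pointing to the Ising example---so neither version supplies a uniform argument for the non-isomorphism claim.
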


\begin{cor}\label{cor:vanish.H.vir}
Let $V$ be the Virasoro minimal model $\vir_{2,2k+1}$ where $k \in \Z_{\geq 1}$, and let $A^\bullet(q)$ be the complex (\ref{eq:A.comp.def}). For $q \neq 0$ we have
\begin{align*}
H^{-1}(A^\bullet(q)) \cong H_1^{\text{ch}}(X_{q}, \CA_V),
\end{align*}
and 
\begin{align*}
H^{-1}(A^\bullet(q=0)) = 0.
\end{align*}
\end{cor}

\begin{proof}
The isomorphism condition appearing in Theorem \ref{thm:main-theorem} is satisfied by $V$ by Theorem \ref{thm:minimal}. It remains to verify that $\Hoch_1(\zhu(V)) = 0$, but this holds because $V$ is rational and so in turn $\zhu(V)$ is semisimple.
\end{proof}

\begin{nolabel}
We remark that the algebras $R/I$ have also been studied in \cite{Kanade} in connection with principal subspaces of $\widehat{\mathfrak{sl}}_2$-modules.

It is interesting to note that two different minimal models may have isomorphic associated schemes. This is the case for $\vir_{2,7}$ and the Ising model $\vir_{3, 4}$, for instance. We thus obtain an embedding
\[
\sing(\vir_{3,4}) \hookrightarrow \sing(\vir_{2,7}),
\]
or equivalently a surjective morphism of the associated Poisson vertex algebras. We do not know if there is a relation between the corresponding vertex algebras which explains this morphism.
\label{no:explain-morphism}
\end{nolabel}

\begin{nolabel}\label{subsec:aff.c2}
Now we consider the simple affine vertex algebras. Let $\g$ be a simple Lie algebra and $k \in \Z_{\geq 0}$. The simple quotient of $V^k(\g)$ is denoted $V_k(\g)$ and is a rational vertex algebra. The maximal ideal of $V^k(\g)$ coincides with its maximal $\widehat{\g}$-submodule and is generated by the singular vector $v_k = {e_\theta}(-1)^{k+1}\vac$, where $e_\theta$ is the highest root vector of $\g$ \cite{Kac.VA.Book}. We consider the adjoint action of $\g$ on $S(\g)$ and we denote by $W$ the $\g$-submodule of $S(\g)$ generated under this action by $e_\theta^{k+1}$. Since $v_k$ is a highest weight vector for the $\widehat{\g}$-action on $V^k(\g)$, it follows that $R_{V_k(\g)}$ is the quotient of $R_{V^k(\g)} \cong S(\g)$ by the commutative algebra ideal generated by $W$.
\end{nolabel}

\begin{nolabel}
Now we pass to the special case $\g = \mathfrak{sl}_2$, using $\{e, h, f\}$ to denote its standard basis. In this case $W \subset S(\g)$ is the span of the vectors $\ad(f)^ie^{k+1}$ for $i = 0, 1, \ldots, 2k+2$.
\end{nolabel}

\begin{nolabel}\label{subsec:sl2grob}
We write $R$ for the differential algebra
\[
J \bigl( \C[e, h, f] \bigr) = \C[e_0, h_0, f_0, e_1, h_1, \ldots]
\]
with derivation $T$ defined by $T(e_n) = e_{n+1}$ and similarly for $h$ and $f$. We equip the set of variables with the total order
\begin{align}\label{var.tot.ord.16}
e_0 > h_0 > f_0 > e_1 > h_1 > f_1 > e_2 > \cdots,
\end{align}
we assign the (conformal weight) degree $\D(x_i) = i+1$ where $x$ here stands for $e$, $h$ or $f$, and we equip the set of monomials of $R$ with the corresponding grevlex order.

We introduce $e(t) = \sum_{k \in \Z_{\geq 0}} e_k \frac{t^k}{k!}$, and similarly $h(t)$ and $f(t)$, and for any polynomial $a$ in three variables we write
\[
a(e(t), h(t), f(t)) = \sum_{k \in \Z_{\geq 0}} [a(e,h,f)]_k \frac{t^k}{k!},
\]
so that, for instance, $[eh^2]_k = k! \sum_{p+q+r=k} e_p h_q h_r / (p!q!r!)$.

In general, with reference to a given monomial of $R$, we use the symbol $\#(e_i)$ to denote the power to which $e_i$ appears in the monomial, etc. Similarly $\#(e)$ denotes $\sum_{i \in \Z_{\geq 0}} \#(e_i)$ and similar symbols such as $\#(e_{\geq i})$, etc., are self-explanatory.

The algebra $R$ comes with three distinct gradings, in addition to the conformal weight grading already introduced, all of which we shall have occasion to use in the proof of Lemma \ref{K.inside} below. For a given monomial we define
\begin{align*}
\text{charge} &= \#(e) - \#(f), \\
\text{degree} &= \#(e) + \#(h) + \#(f), \\
\text{weight} &= \sum_{m \in \Z_{\geq 0}} m(\#(e_m) + \#(h_m) + \#(f_m)).
\end{align*}
In fact the conformal weight is now given by $\D = \text{degree} + \text{weight}$.
\end{nolabel}

\begin{nolabel}\label{nola:I.def}
We define $I \subset R$ to be the ideal generated by the following set of polynomials
\begin{align}\label{ad.generators}
[\ad(f)^{i}e^{k+1}]_m \quad \text{where $m \in \Z_{\geq 0}$ and $i = 0, 1, \ldots, 2k+2$}.
\end{align}
Of course $JR_{V_k(\mathfrak{sl}_2)} \cong R/I$. On the other hand we consider the set of those monomials of $R$ satisfying one (or more) of the conditions
\begin{align}\label{MPRels}
\begin{split}
\begin{alignedat}{2}
&\#(h_{m+1}) + \#(f_{m}) + \#(f_{m+1}) = k+1, 
\quad\quad & &\#(e_{m+1}) + \#(h_{m+1}) + \#(f_{m}) = k+1, \\
&\#(e_{m+1}) + \#(h_{m}) + \#(f_{m}) = k+1,
\quad\quad & &\#(e_{m}) + \#(e_{m+1}) + \#(h_{m}) = k+1,
\end{alignedat}
\end{split}
\end{align}
for some value(s) of $m \in \Z_{\geq 0}$. We set $K \subset R$ to be the ideal generated by the monomials so obtained.
\end{nolabel}

\begin{lem}\label{K.inside}
Every monomial of $R$ satisfying at least one of the conditions (\ref{MPRels}) is the leading monomial of some polynomial from the set (\ref{ad.generators}). In particular $K \subset \LT(I)$.
\end{lem}

\begin{proof}
We begin by observing that in general
\begin{align}\label{adfe.sum}
\ad(f)^ie^{k+1} = \sum_{p, q, r} \al_p^{(k)} e^p h^{q} f^{r},
\end{align}
where the sum runs over all $p, q, r \in \Z_{\geq 0}$ such that $p+q+r = k+1$ and $q+2r=i$, and the $\al_p^{(k)}$ are certain rational coefficients. Using the commutation relations of $\mathfrak{sl}_2$ to write down a recurrence relation for the coefficients, it is straightforward to deduce that every monomial term in the sum of (\ref{adfe.sum}) appears with nonzero coefficient.

We now show that monomials satisfying $\#(h_{m+1}) + \#(f_{m}) + \#(f_{m+1}) = k+1$ lie in $\LT(I)$. More specifically, for given $q, r_-, r_+ \in \Z_{\geq 0}$ satisfying $q+r_-+r_+=k+1$, we consider the monomial
\[
M = h_{m+1}^qf_m^{r_-}f_{m+1}^{r_+}.
\]
The charge of $M$ is $-(r_++r_-)$, the degree is $k+1$ and the weight is
\[
w = \text{weight}(M) = (m+1)(k+1)-r_-.
\]
We claim that $M$ is the leading monomial of
\[
X = [\ad(f)^{q+2(r_-+r_+)}e^{k+1}]_{w},
\]
here we have chosen the power of $\ad(f)$ so that $X$ is homogeneous of charge $-(r_-+r_+)$, degree $k+1$ and weight $w$.

We consider an arbitrary monomial term of $X$ and we assume it to be no lower than $M$ in the grevlex monomial order. We therefore have $\#(e_i) = \#(h_i) = 0$ for $i \leq m$, $\#(f_i) = 0$ for $i \leq m-1$ and $\#(f_m) \leq r_-$. Let us write
\[
C = \#(e_{\geq m+2})+\#(h_{\geq m+2})+\#(f_{\geq m+2}).
\]
The weight $w$ of the monomial is clearly at least
\begin{align*}
 (m+2)&\left( \#(e_{\geq m+2})+\#(h_{\geq m+2})+\#(f_{\geq m+2})\right) \\
&+ (m+1)\left( \#(e_{m+1})+\#(h_{m+1})+\#(f_{m+1})\right)+m\#(f_m) \\
%
%
= {} & C + (m+1) \left( \#(e_{\geq m+1})+\#(h_{\geq m+1})+\#(f_{\geq m}) \right) - \#(f_m) \\
= {} & C + (m+1)(k+1) - \#(f_m).
\end{align*}
From this calculation and the relation $w = (m+1)(k+1)-r_-$ we deduce
\[
\#(f_m) \geq r_- + C.
\]
Combining this with $\#(f_m) \leq r_-$ yields $C = 0$ and $\#(f_m) = r_-$. Given that $\#(f_m) = r_-$ and that $M$ does not contain the term $e_{m+1}$, the presence of the term $e_{m+1}$ in our monomial would also result in a contradiction, so we must have $\#(e) = \#(e_{m+1}) = 0$. Now the condition on the charge implies that $\#(f) = r_-+r_+$ and hence $\#(f_{m+1}) = r_+$. So in fact the monomial we started with must be $M$ itself.

Similar arguments can be employed to demonstrate that monomials satisfying the other conditions (\ref{MPRels}) also lie in $\LT(I)$. We omit the details.

\end{proof}

\begin{thm}\label{thm:affine}
Let $k \in \Z_{\geq 0}$ and let $V$ denote the simple affine vertex algebra $V_k(\mathfrak{sl}_2)$. The natural embedding $\sing(V) \hookrightarrow JX_V$ is an isomorphism.
\end{thm}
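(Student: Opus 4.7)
The plan is to prove equality of Hilbert series between $JR_V$ and $\gr^F V$, which, combined with the canonical surjection $JR_V \twoheadrightarrow \gr^F V$, will force the surjection to be an isomorphism. Since $R_V \cong S(\mathfrak{sl}_2)/W$ as described in Section \ref{subsec:aff.c2}, we have $JR_V \cong R/I$ where $R$ and $I$ are as in Section \ref{subsec:sl2grob}. The strategy is to bound the Hilbert series of $R/I$ from above by that of $R/K$, and identify $R/K$ with a well-studied combinatorial object.

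First, applying Proposition \ref{equal.H} to the graded ideal $I$ equipped with the grevlex order of Section \ref{subsec:ordering}, we have $H_{R/I}(q) = H_{R/\LT(I)}(q)$. By Lemma \ref{K.inside} we have the inclusion $K \subset \LT(I)$, which yields $H_{R/\LT(I)}(q) \leq H_{R/K}(q)$ coefficient-wise. Since $K$ is a monomial ideal generated by the monomials satisfying at least one of the four conditions \eqref{MPRels}, the quotient $R/K$ admits a linear basis given by those monomials in the variables $\{e_i, h_i, f_i\}_{i \in \Z_+}$ that avoid these four forbidden patterns, i.e., monomials whose triples of consecutive-index factors never reach the multiplicity $k+1$ in the prescribed combinations.

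The second key input is the theorem of Meurman and Primc \cite{MPbook}, which furnishes an explicit PBW-type monomial basis for the integrable level $k$ highest weight module over $\widehat{\mathfrak{sl}}_2$, consisting of monomials in the negative Fourier modes of $e, h, f$ that satisfy exactly the difference conditions dual to \eqref{MPRels}. Under the identification of these negative modes with the generators $e_i, h_i, f_i$ of $R$, the Meurman-Primc admissible monomials coincide exactly with the monomial basis of $R/K$ described above. Consequently
\begin{align*}
H_{R/K}(q) = \chi_{V_k(\mathfrak{sl}_2)}(q) = H_{\gr^F V}(q),
\end{align*}
where the last equality holds because Hilbert series are preserved under passage to the associated graded of a filtration.

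Assembling these inequalities, we obtain
\begin{align*}
H_{\gr^F V}(q) \leq H_{JR_V}(q) = H_{R/\LT(I)}(q) \leq H_{R/K}(q) = H_{\gr^F V}(q),
\end{align*}
where the first inequality comes from the canonical surjection $JR_V \twoheadrightarrow \gr^F V$. Equality forces this surjection to be an isomorphism, which is the claim. The main obstacle in this approach is Lemma \ref{K.inside}, which provides the nontrivial link between the ideal $I$ arising naturally from the singular vector $e_\theta(-1)^{k+1}\vac$ and the combinatorial ideal $K$ governing the Meurman-Primc admissible monomials; the remainder of the argument is a routine comparison of generating functions.
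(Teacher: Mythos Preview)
Your proposal is correct and follows essentially the same approach as the paper: bound $H_{\gr^F V}(q) \leq H_{JR_V}(q) = H_{R/\LT(I)}(q) \leq H_{R/K}(q)$ via the canonical surjection, Proposition \ref{equal.H}, and Lemma \ref{K.inside}, then close the chain using the Meurman--Primc basis to identify $H_{R/K}(q)$ with $\chi_V(q) = H_{\gr^F V}(q)$. The paper makes the identification with the Meurman--Primc basis slightly more explicit by writing down the map $\phi: R/K \to V$ sending $e_i \mapsto e(-i-1)$ etc.\ and comparing \eqref{MPRels} directly with the difference conditions in \cite[equation (3)]{MPbook}, but the substance is the same.
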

\begin{proof}
We consider the quotient $R/I$ defined in Section \ref{subsec:sl2grob} above as a graded ring with grading $\D$. The arc space $JR_V$, with its conformal weight grading, is isomorphic as a graded ring to $R/I$, and so $\chi_{JR_V}(q) = H_I(q)$.

It is a general fact that $\chi_V(q) = \chi_{\gr^F(V)}(q)$. Since $JR_V \rightarrow \gr^F(V)$ is a surjection we have $\chi_{\gr^F(V)}(q) \leq \chi_{JR_V}(q)$. Proposition \ref{equal.H} asserts that $H_I(q) = H_{\LT(I)}(q)$. By Lemma \ref{K.inside} we have $H_{\LT(I)}(q) \leq H_{K}(q)$.


Obviously the quotient $R/K$ possesses a linear basis consisting of all those monomials that satisfy the following four conditions for every $m \geq 0$ 
\begin{align}\label{MPdiffrels}
\begin{split}
\begin{alignedat}{2}
&\#(f_{m+1}) + \#(h_{m+1}) + \#(f_{m}) \leq k, 
\quad\quad & &\#(h_{m+1}) + \#(e_{m+1}) + \#(f_{m}) \leq k, \\
&\#(e_{m+1}) + \#(f_{m}) + \#(h_{m}) \leq k,
\quad\quad & &\#(e_{m+1}) + \#(h_{m}) + \#(e_{m}) \leq k.
\end{alignedat}
\end{split}
\end{align}
We now define a linear morphism $\phi : R/K \rightarrow V$ by specifying its action on monomials as follows. Write the monomial with its variables in increasing order according to (\ref{var.tot.ord.16}). Now replace $e_i$ with $e{(-i-1)}$, $h_i$ with $h{(-i-1)}$ and $f_i$ with $f{(-i-1)}$. Read the result as an element of $V$. By comparing (\ref{MPdiffrels}) with {\cite[equation (3)]{MPbook}} in the special case $k_0 = k$ it follows immediately that the elements of $V$ thus obtained constitute a basis. Hence $\phi$ is a (graded) isomorphism, and we obtain $H_{K}(q) = \chi_V(q)$.

Putting all these observations together yields
\[
\chi_V(q) = \chi_{\gr^F(V)}(q) \leq \chi_{JR_V}(q) = H_I(q) = H_{\LT(I)}(q) \leq H_{K}(q) = \chi_V(q),
\]
from which it follows that $\chi_{\gr^F(V)}(q) = \chi_{JR_V}(q)$ and hence that $JR_V \rightarrow \gr^F(V)$ is an isomorphism.
\end{proof}
We record the following consequence of the proof of Theorem \ref{thm:affine}.
\begin{prop}\label{sl2grob.kgeneral}
The set (\ref{ad.generators}) is a Gr\"{o}bner basis of the ideal $I \subset R$ defined in \ref{nola:I.def} above.
\end{prop}

\begin{proof}
By the proof of Theorem \ref{thm:affine} we have $H_{\LT(I)}(q) = H_{K}(q)$ which, together with Lemma \ref{K.inside}, implies that $K = \LT(I)$. Buchberger's theorem \cite[p. 77, p. 85]{CLOS} asserts that a set $G$ of generators of an ideal $I$ is a Gr\"{o}bner basis if and only if $\LT(I) = \left<\LT(x) | x \in G\right>$. Taking $G$ to be the set of polynomials (\ref{ad.generators}) we obtain the desired result.
\end{proof}



Finally we deduce the following corollary, whose proof is the same as that of Corollary \ref{cor:vanish.H.vir} above.
\begin{cor}
Let $V$ be the simple affine vertex algebra $V_k(\mathfrak{sl}_2)$ where $k \in \Z_{\geq 0}$, and let $A^\bullet(q)$ be the complex (\ref{eq:A.comp.def}). For $q \neq 0$ we have
\begin{align*}
H^{-1}(A^\bullet(q)) \cong H_1^{\text{ch}}(X_{q}, \CA_V),
\end{align*}
and 
\begin{align*}
H^{-1}(A^\bullet(q=0)) = 0.
\end{align*}
\end{cor}

\begin{nolabel}
In \cite{AM} it has been shown that if $V$ is \emph{quasi-lisse}, that is, if the reduced scheme $\Specm(R_V)$ has finitely many symplectic leaves, then the reduced schemes $\Specm(JR_V)$ and $\Specm(\gr^F{V})$ are isomorphic. In general however, as Theorem \ref{thm:minimal} above indicates, the schemes $\Spec(JR_V)$ and $\Spec(\gr^F{V})$ may fail to be isomorphic even when the reduced schemes each consist a single point.
\end{nolabel}
\begin{nolabel}
Let $V$ be the simple quotient of Zamolodchikov's $W_3$-algebra at central charge $c=-2$. In \cite{AL} it has been shown that the surjection $JR_V \rightarrow \gr^F{V}$ is not an isomorphism.
\end{nolabel}

\bibliographystyle{plain}

\end{document}